\tikzstyle directed=[postaction={decorate,decoration={markings,
    mark=at position .65 with {\arrow[arrowstyle]{latex}}}}]
\tikzstyle arrowstyle=[scale=1]
\newtheorem{thm}{Theorem}[section]
\newtheorem{cor}{Corollary}[section]
\newtheorem{prop}{Proposition}[section]
\newtheorem{lem}{Lemma}[section]
\theoremstyle{definition}
\newtheorem{example}{Example}[section]
\newtheorem{definition}{Definition}[section]
\newcommand{\N}{\mathbb{N}}
\newcommand{\Z}{\mathbb{Z}}
\newcommand{\Rset}{\mathbb{R}}
\newcommand{\Wr}{\operatorname{Wr}}
\newcommand{\Nz}{{\N_0}}
\newcommand{\h}{H}
\renewcommand{\th}{\tilde{H}}
\newcommand{\MGH}{\operatorname{GH}}
\newcommand{\MO}{\operatorname{O}}
\newcommand{\hM}{H_M}
\newcommand{\hMp}{H_{M'}}
\newcommand{\PIV}{$\mathrm{P}_{\mathrm{IV}}\,\,$}
\begin{document}
\title{Durfee rectangles and pseudo-Wronskian equivalences for Hermite polynomials}

\author{David G\'omez-Ullate}
\address{Departamento de F\'isica Te\'orica II, Universidad Complutense de
  Madrid, 28040 Madrid, Spain.}
\address{Instituto de Ciencias Matem\'aticas (CSIC-UAM-UC3M-UCM),  C/ Nicolas Cabrera 15, 28049 Madrid, Spain.}

\author{Yves Grandati}
\address{ LCP A2MC, Universit\'{e} de Lorraine, 1 Bd Arago, 57078 Metz, Cedex 3,
  France.}
\author{Robert Milson}
\address{Department of Mathematics and Statistics, Dalhousie University,
  Halifax, NS, B3H 3J5, Canada.}
\email{david.gomez-ullate@icmat.es, grandati@univ-metz.fr,   rmilson@dal.ca}

  \title{Durfee rectangles and pseudo-Wronskian equivalences for Hermite polynomials}

\maketitle

\begin{abstract}
We study an equivalence class of iterated rational Darboux transformations applied on the harmonic oscillator, showing that many choices of state adding and state deleting transformations lead to the same transformed potential. As a by-product, we derive new identities between determinants whose entries are Hermite polynomials. These identities have a combinatorial interpretation in
  terms of Maya diagrams, partitions and Durfee rectangles, and serve
  to characterize the equivalence class of rational Darboux
  transformations. Since the determinants have different orders, we
  analyze the problem of finding the minimal order determinant in each
  equivalence class, or equivalently, the minimum number of Darboux transformations. The solution to this problem has an elegant
  graphical interpretation.  The results are applied to provide alternative and more efficient representations for exceptional Hermite polynomials and  rational solutions of the Painlev\'e IV equation. 
\end{abstract}


\section{Introduction}

In this paper we introduce an infinite number of identities between
determinants whose entries are Hermite polynomials. The remarkable
fact is that the identities involve determinants of different size.
An example of such an identity would be given by

\begin{equation}\label{ex:identity}
  \begin{vmatrix}
    \h_{1} & \h_{1}' & \h_{1}'' & \h_{1}'''\\
    \h_{2} & \h_{2}' & \h_{2}'' & \h_{2}'''\\
    \h_{3} & \h_{3}' & \h_{3}'' & \h_{3}'''\\
    \h_{6} & \h_{6}' & \h_{6}'' & \h_{6}'''
  \end{vmatrix}
  =
  \frac{1}{48} \begin{vmatrix}
    \th_{1} & \th_{1}' & \th_{1}'' \\
    \th_{2} & \th_{2}' & \th_{2}'' \\
    \th_{6} & \th_{6}' & \th_{6}'' \\
  \end{vmatrix}  
  =
  \frac{1}{7680} 
  \begin{vmatrix}
    \h_{2} & \h_{2}' \\
    \th_{3} & \th_{4} \\
  \end{vmatrix}    
\end{equation}
where $H_n(x)$ denotes the $n$-th Hermite polynomial and
$\th_{n}(x)= {\rm i}^{-n} H_n ({\rm i} x)$.  The first two
determinants are Wronskian determinants, and this type of identities
had already been shown in \cite{Felder2012}, and possibly known before
\cite{Oblomkov1999}. As we shall explain below, they are a particular
case of the whole equivalence class described in this paper that
involves a given partition and its conjugate partition.

The third determinant in \eqref{ex:identity} is not a Wronskian, but
can be constructed in a similar fashion and thus we have coined the
name \textit{pseudo-Wronskian} for it. A pseudo-Wronskian involves two
sequences, one of ordinary Hermite polynomials $\h_{n}$ and one of
conjugate Hermite polynomials $\th_{n}$. The determinant is built by
taking derivatives of the ordinary Hermite polynomials as in the usual
Wronskian, but shifting upwards the degree (which is almost an
integration) for the conjugate Hermite polynomials.

Wronskian determinants are a key object in the theory of linear
differential equations, but they also arise naturally when iterating
Darboux transformations in Schr\"odinger's equation, as it was shown
by Crum \cite{Crum}. This factorization method has found many applications \cite{deift}, ranging from the theory of integrable systems \cite{Feigin,zubelli}, soliton theory \cite{Freeman1983,Matveev} and bispectral problems \cite{grunbaum,Bakalov,grunbaum2}, among others.

Darboux transformations are also fundamentally related to exceptional
orthogonal polynomials, \cite{Gomez-Ullate2012a,Garcia-Ferrero2016},
and it was precisely this research that motivated the results reported
in this paper. The determinantal identities discussed here express an
equivalence class of rational Darboux transformations that lead to the
same transformed potential up to a spectral shift, and the two
families of entries ($\h_{n}$ and $\th_{n}$) correspond to the two
families of seed functions for state-adding and state-deleting
rational Darboux transformations in the harmonic oscillator.

At the Schr\"odinger picture, these equivalences had already been
noticed by in the Laguerre and Jacobi cases by Odake in
\cite{Odake2014}, and by the authors in \cite{krein-adler}. The most
convenient way to visualize the equivalence is by shifting the origin
in a Maya diagram, a type of diagram that was originally introduced by
Sato in integrable systems theory \cite{ohta}.  They were applied to
the context of Darboux transformations and exceptional polynomials by
Takemura \cite{Takemura2014}.

Wronskian determinants of Hermite polynomials appear already in the work of Karlin and Szeg\H{o} \cite{Karlin1960}, who provide expressions for the number of real zeros of the Wronskian of a sequence of consecutive Hermite polynomials. Those sequences for which the Wronskian determinant has no real zeros were characterized by Adler in \cite{Adler1994}, and the result for arbitrary sequences has been recently derived by Garc\'ia-Ferrero et al. \cite{Garcia-Ferrero2015}, using oscillatory type arguments. The complex zeros of these Wronskians of Hermite polynomials display very intriguing symmetric patterns on the complex plane, \cite{Felder2012,clarkson1,FP2008}.

Beyond their obvious interest in Sturm-Liouville theory, Wronskian
determinants of classical polynomials play a role in the construction
of rational solutions to nonlinear integrable equations of Painlev\'e
type. The symmetry approach based on B\"acklund transformations of
Painlev\'e equations developed by Noumi and his collaborators (see
\cite{noumi} and references therein) shows how to construct rational
solutions by applying the symmetry group to a number of seed
solutions. Maya diagrams and partitions are also an essential part of
their description. In particular, two families of rational solutions
to \PIV can be constructed using generalized Hermite and Okamoto
polynomials, which can both be expressed as Wronskians of specific
sequences of Hermite polynomials \cite{clarkson2009}.  We show how to
apply our results to provide an alternative pseudo-Wronskian
representation of Okamoto and generalized Hermite polynomials, which
happens to be more efficient in the former case.

It is also possible to view the pseudo-Wronskian determinants
introduced in this paper as an extension of Jacobi-Trudi formulas in
the theory of symmetric functions, \cite{FH}. The original
Jacobi-Trudi formulas express the Schur polynomial $s_\lambda$
associated to a given partition $\lambda$ as a determinant whose
entries are complete homogeneous symmetric polynomials, or as a
determinant whose entries are elementary symmetric polynomials
associated to the conjugate partition $\lambda'$. Jacobi-Trudi
formulas were already extended to include classical orthogonal
polynomials in \cite{Sergeev2014} and lead to exceptional orthogonal
polynomials in \cite{Grandati-Schur}. The pseudo-Wronskian
determinants in this paper can be regarded as a generalization of
Jacobi-Trudi formulas that involve not just the partition $\lambda$ or
its conjugate partition $\lambda'$, but also mixed representations
described by the Durfee symbols introduced by Andrews in
\cite{andrews3}.  Analogous results for the Laguerre and Jacobi
classes, including appropriately defined pseudo-Wronskians, will be
presented in \cite{GGUM2}.

The paper is organized as follows: in Section \ref{sec:def} we
introduce the basic definitions and we review the connection between
partitions and Maya diagrams. In Section \ref{sec:pW} we show how to
associate a pseudo-Wronskian determinant to each labelled Maya diagram
and we prove the main theorem stating the proportionality of
pseudo-Wronskian determinants for all labelled Maya diagrams in the
same equivalence class. In Section \ref{sec:min} we address the
problem of finding the minimal order pseudo-Wronskian determinant in
each equivalence class. Finally, in the last two sections we apply our
results to derive alternative and more efficient pseudo-Wronskian
representations of exceptional Hermite polynomials and special
polynomials related to rational solutions of Painlev\'e type
equations.

\section{Definitions and preliminaries}\label{sec:def}
\begin{definition}
  We define a Maya diagram to be a set of integers $M\subset\Z$ that
  contains a finite number of positive integers, and excludes a finite
  number of negative integers.  
\end{definition}
\noindent

\begin{definition}
  It is clear that if $M$ is a Maya diagram, then for $k\in \Z$ so is
  $M+k = \{ m+k \colon m\in M \}$. We say that $M$ and $M+k$ are
  \textit{equivalent Maya diagrams}, and we define an
  \textit{unlabelled Maya diagram} to be the equivalence class of Maya
  diagrams related by such shifts.
\end{definition}
We visualize a Maya diagram as a horizontally extended sequence of
filled and empty boxes, with an origin placed between the box in
position $-1$ and box in position $0$, and with filled boxes
indicating membership in $M$. By contrast, an unlabelled Maya diagram
should be visualized as a sequence of filled and unfilled boxes,
without a choice of origin.  In this formulation, the defining
assumption of a Maya diagram is that all boxes sufficiently far to the
left are filled, and that all boxes sufficiently far to the right are
empty. From a physical point of view, a Maya diagram depicts the empty and filled energy levels corresponding to the spectrum of a Hamiltonian with a pure point spectrum. At the filled energy levels the Hamitonian would have a bound state, leading to a state-deleting rational Darboux transformations while at the empty levels there would be a quasi-rational formal eigenfunction, leading to a rational state-adding transformation.\footnote{For simplicity we restrict in this paper to the case of the harmonic potential, for which only state-adding and state-deleting rational Darboux transformations exist. We postpone the analysis of hamiltonians with rational isospectral Darboux transformations to a forthcoming publication.}


\begin{definition}
  We say that a Maya diagram $M\subset \Z$ is in standard form if
  $0\notin M$ and $k\in M$ for every $k<0$.
\end{definition}
\noindent
Visually, a standard diagram has a gap just to the right of the
origin, and no gaps to the left of the origin.
\begin{prop}
  \label{prop:stdform} 
  Let $M\subset \Z$ be a Maya diagram.  Then there exists a unique
  $k\in \Z$ such that $M-k$ is in standard form.
\end{prop}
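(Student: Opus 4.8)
The plan is to exhibit the shift $k$ explicitly in terms of the combinatorial data of $M$, and then argue uniqueness from the definition of standard form. Recall that a Maya diagram $M$ contains all sufficiently negative integers and only finitely many positive integers. The key quantity is the "level" of $M$, namely the unique integer $k$ such that $M$ and $\{m < k\}$ agree far out: concretely, set
\[
  k = \#\{\, m \in M \colon m \geq 0 \,\} \;-\; \#\{\, m \notin M \colon m < 0 \,\}.
\]
Both cardinalities are finite by the defining property of a Maya diagram, so $k$ is a well-defined integer.

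First I would verify that $M - k$ is in standard form, i.e. that $0 \notin M-k$ and every negative integer lies in $M-k$. The cleanest way is to observe that a Maya diagram $N$ is in standard form if and only if $N = \{\, n \in \Z \colon n < 0 \,\} \triangle F$ where the "excess" set $F = \{\,n\in N\colon n\ge 0\,\}$ and the "defect" set $\{\,n\notin N\colon n<0\,\}$ satisfy a balance condition; in fact $N$ is standard precisely when the defect set is empty, and one checks that shifting $N$ by $1$ to the left decreases the quantity $\#\{n\in N\colon n\ge 0\} - \#\{n\notin N\colon n<0\}$ by exactly $1$ while shifting right increases it by $1$. Since for a standard diagram this quantity is $0$ (no negative gaps, and the box at $0$ is empty — although here a short argument is needed: a standard diagram can still have filled boxes in positions $\ge 1$, but then it must have matching empty boxes, which is false; so actually one shows the quantity equals the number of filled non-negative boxes, which for a standard diagram... ) — the honest route is: prove the shift-by-one behaviour of the integer-valued invariant $c(M) := \#\{m\in M\colon m\ge 0\} - \#\{m\notin M\colon m<0\}$, namely $c(M-1) = c(M) - 1$, and prove that $c(M)=0$ whenever $M$ is standard, then $c(M-k)=c(M)-k=0$ forces nothing by itself, so instead one directly checks $M-k$ is standard by a monotonicity/pigeonhole argument on where the first gap and last filled box sit.

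Concretely, the clean argument I would actually write: let $a$ be the smallest integer not in $M$ (exists since only finitely many positives are in $M$, so eventually boxes are empty, but also $M$ could contain a negative gap — $a$ is just the position of the leftmost empty box, which exists because not all of $\Z$ is in $M$). Let $b-1$ be the largest integer in $M$ below... this is getting complicated because gaps and filled boxes can interleave. The correct invariant to track is simpler: define $k$ so that $M - k$ has the same number of non-negative elements as it has negative non-elements, and note that as a function of the shift this difference is strictly monotonic (shifting left by $1$ removes one element from the non-negative side — the old box $0$, now at $-1$, if filled, or adds one gap on the negative side if empty — in either case the difference drops by exactly $1$, and symmetrically shifting right raises it by $1$), hence attains the value $0$ for exactly one shift. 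Finally I would show that the difference being $0$ is equivalent to being in standard form: if $M-k$ is standard the difference is clearly $0$ (no negative gaps, and one must separately note a standard diagram has no filled non-negative boxes — which follows because the "staircase" of a standard diagram is exactly $\Z_{<0}$); conversely if the difference is $0$ but the diagram is not standard, the leftmost empty box is in some position $p$ and the rightmost filled box is in position $q$ with... — and here a swapping/exchange argument shows the difference is then strictly positive, a contradiction.

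The main obstacle is precisely this last equivalence: "difference $= 0$" $\iff$ "standard form". The monotonicity of $c(M)$ under shifts is routine, and existence of a zero for $c$ then follows since $c$ takes all integer values as the shift ranges over $\Z$ (it is surjective onto $\Z$ because $c(M-k) = c(M) - k$). Uniqueness is immediate from strict monotonicity. So the real content is showing that the arithmetic balance condition $c(M)=0$ captures exactly the visual/geometric condition that all negative boxes are filled and the box at $0$ is empty; this requires the observation that in a standard Maya diagram there can be no filled boxes in non-negative positions at all — equivalently, that $M$ standard forces $M = \Z_{<0}$ shifted by... no: $M$ standard allows e.g. $M = \Z_{<0} \cup \{1\}$? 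No — that has a gap at $0$ and filled box at $1$, which is still "standard" by the literal definition ($0\notin M$, all $k<0$ in $M$), yet $c(M) = 1 \neq 0$. So $c(M)=0$ is \emph{strictly stronger} than standard form, which means my proposed $k$ is wrong! The fix: the correct characterization of standard form is just "leftmost gap is at position $0$", i.e. $\min(\Z\setminus M) = 0$. So I would instead take $k = \min(\Z \setminus M)$ — which exists and is finite since $M$ omits infinitely many (indeed all large) integers... wait, $M$ omits only finitely many \emph{negative} integers but must omit \emph{all} sufficiently large positive integers, so $\Z\setminus M$ is nonempty; and $\Z\setminus M$ is bounded below since $M$ contains all sufficiently negative integers. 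Hence $k := \min(\Z\setminus M)$ is a well-defined integer, $M - k$ has its leftmost gap at $0$ (so $0\notin M-k$ and all negatives are in $M-k$), giving existence; and uniqueness follows since the standard form condition pins down the leftmost gap to be at $0$, so the shift must move $\min(\Z\setminus M)$ to $0$, forcing $k = \min(\Z\setminus M)$. This is the argument I would write, and the only subtlety to be careful about is justifying finiteness/existence of this minimum from the two defining properties of a Maya diagram.
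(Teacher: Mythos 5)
Your final argument --- taking $k=\min(\Z\setminus M)$, checking it is a well-defined integer because $\Z\setminus M$ is nonempty and bounded below, and noting that standard form pins the leftmost gap to position $0$ so this $k$ is forced --- is correct and coincides exactly with the paper's own (one-line) proof. The earlier ``balance count'' invariant $c(M)$ that occupies most of your write-up is indeed wrong for the reason you yourself identify (e.g.\ $M=\Z_{<0}\cup\{1\}$ is standard but $c(M)=1$), and since you discard it in favour of the correct shift, nothing further is needed.
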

\begin{proof} The desired shift is given by $k=\min\, \Z\setminus M$.
\end{proof}

\begin{figure}
\begin{center}
  \includegraphics[width=0.9\textwidth]{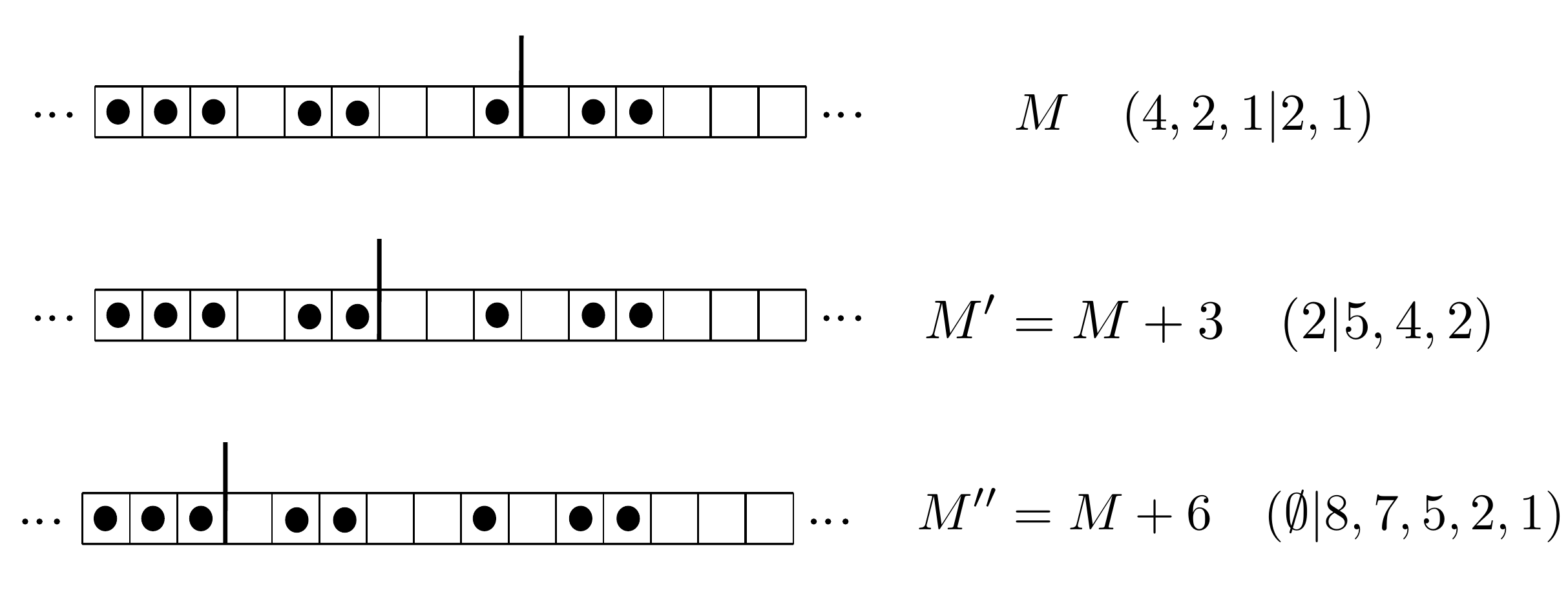}
\caption{Three equivalent Maya diagrams corresponding to the partition
$\lambda=(4,4,3,1,1)$, together with their Frobenius representation. The third diagram is in standard form.}
  \label{fig:equivM}
\end{center}
\end{figure}

\begin{definition} A partition $\lambda$ is a non-increasing sequence
  of integers $\lambda_1\geq \lambda_2\geq \cdots$ such that
  $\lambda_i = 0$ for sufficiently large $i$.  Let $\ell$ be the
  largest index such that $\lambda_\ell>0$. We call $\ell$ the length
  of the partition.  We call
  $|\lambda| = \lambda_1+\cdots+ \lambda_\ell$ the size of the
  partition.
\end{definition} 

Let $M\subset\Z$ be a Maya diagram and let $m_1>m_2>\cdots$ be its
elements ordered in decreasing order. Consider the partition defined
by
\begin{equation}
  \label{eq:lambdafromM} \lambda_i = \#\{  m\notin M \colon m < m_i\}
,\quad i=1,2\ldots .
\end{equation}
\begin{prop} The correspondence $M\mapsto \lambda$ given by
\eqref{eq:lambdafromM} defines a bijection between the set of
unlabelled Maya diagrams and the set of partitions.
\end{prop}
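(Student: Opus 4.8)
The plan is to construct an explicit two-sided inverse to the correspondence $M\mapsto\lambda$ given by \eqref{eq:lambdafromM}. Since every unlabelled Maya diagram has, by Proposition \ref{prop:stdform}, a unique representative in standard form, it suffices to work with standard-form diagrams. First I would observe that \eqref{eq:lambdafromM} is shift-invariant in the appropriate sense: the complement $\Z\setminus M$ and the relative order of $m_i$ against gaps are unchanged when one deletes finitely many leading terms or shifts, so $\lambda$ depends only on the unlabelled class; this makes the map well-defined and lets us fix a convenient labelling.

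Next I would set up the combinatorial dictionary explicitly. Write $M$ in standard form, so $\{k\in\Z : k<0\}\subset M$ and $0\notin M$. Enumerate the elements $m_1>m_2>\cdots$; because all sufficiently negative integers lie in $M$, for large $i$ one has $m_i = -i + c$ for a fixed constant determined by the finitely many ``extra'' positive elements and ``missing'' negative elements, and correspondingly $\lambda_i=0$ for large $i$, so $\lambda$ is genuinely a partition. The key quantitative identity is $m_i = \lambda_i - i + c_M$ where $c_M$ records the standard-form normalization (with standard form one checks $c_M$ is forced, e.g. $c_M = 0$ after the shift, since $m_i = -i$ for large $i$ when there are no surplus positives, and in general $m_i + i - \lambda_i$ is independent of $i$). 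From this relation the inverse map is immediate: given a partition $\lambda$, define $M(\lambda) = \{\lambda_i - i : i\geq 1\}$; one verifies this is a set (the values $\lambda_i - i$ are strictly decreasing because $\lambda$ is non-increasing and $i$ strictly increasing), that it contains all sufficiently negative integers and only finitely many non-negative ones (hence is a Maya diagram in standard form), and that feeding it back into \eqref{eq:lambdafromM} returns $\lambda$.

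The two verifications — that $M\mapsto\lambda\mapsto M(\lambda)$ and $\lambda\mapsto M(\lambda)\mapsto\lambda$ are identities — are then a matter of unwinding the counting in \eqref{eq:lambdafromM}: $\lambda_i$ counts the gaps of $M$ below $m_i$, which, using the staircase picture of filled/empty boxes, equals the number of empty boxes to the left of the $i$-th filled box, and this is exactly the $i$-th part of the partition read off the boundary path of the associated Young diagram. I would phrase this via the standard bijection between Maya diagrams and lattice paths (each filled box a down-step, each empty box a right-step), under which the turning points encode both $m_i$ and $\lambda_i$ simultaneously, so the two directions of the bijection become two readings of the same picture. The main obstacle is purely bookkeeping: pinning down the normalization constant $c_M$ and checking carefully that finiteness conditions match up (finitely many positive elements of $M$ $\leftrightarrow$ finitely many nonzero parts of $\lambda$), so that $M(\lambda)$ really is a Maya diagram and not merely a decreasing set of integers. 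No step is conceptually deep; the figure \ref{fig:equivM} already illustrates the correspondence, and the proof is essentially a formalization of that picture.
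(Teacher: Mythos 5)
Your overall strategy is the same as the paper's: use shift-invariance of \eqref{eq:lambdafromM} together with Proposition \ref{prop:stdform} to reduce to standard-form diagrams, and then exhibit an explicit formula relating $m_i$ and $\lambda_i$ that can be read in both directions. The paper's version of that formula is $m_i - \lambda_i = \ell - i$, where $\ell$ is the length of the partition.

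There is, however, a concrete slip in your normalization. Having reduced to standard forms, you define the inverse by $M(\lambda) = \{\lambda_i - i : i\geq 1\}$ and assert that this set ``is a Maya diagram in standard form.'' It is a Maya diagram, but it is not in standard form: for $\lambda=(1)$ one gets $M(\lambda)=\{0,-2,-3,\dots\}$, which contains $0$ and omits $-1$, whereas the standard-form representative of that class is $\{1,-1,-2,\dots\}$. In general $\{\lambda_i - i\}$ is the standard form shifted by $-\ell$; equivalently, the constant you call $c_M$ equals $\ell$ on standard forms, not $0$ (your remark that $m_i=-i$ for large $i$ holds only when $\ell=0$; in standard form $m_{\ell+1}=-1$, so $m_i=\ell-i$ eventually). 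As written, the round trip $M\mapsto\lambda\mapsto M(\lambda)$ on standard forms is therefore a shift by $\ell$ rather than the identity, so the two-sided-inverse verification you describe fails literally. The repair is one line: either take $M(\lambda)=\{\lambda_i+\ell-i\}$ (the paper's relation \eqref{eq:Mfromlambda}), or keep $\{\lambda_i-i\}$ and invoke shift-invariance once more to conclude that it lands in the correct equivalence class of unlabelled diagrams, which is all the proposition requires. With that correction the rest of your argument, including the lattice-path reading of the boundary (essentially the bent-diagram picture the paper develops in Section \ref{sec:def}), is sound.
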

\begin{proof} 
  Observe that $M+k$ defines the same partition as $M$.  By Proposition
  \ref{prop:stdform} every equivalence class of Maya diagrams contains
  a unique representative in standard form.  Thus, it suffices to
  establish a 1-1 correspondence between partitions and standard
  diagrams.  The desired correspondence is given by the relation
  \begin{equation}
    \label{eq:Mfromlambda} m_i -\lambda_i =  \ell-i,\quad i=1,2,\ldots
  \end{equation} 
  If the above relation holds, then $\lambda$ is a partition of length
  $\ell$ if and only if $M$ is a standard diagram with $\ell$ positive
  elements.
\end{proof}

It is convenient to represent a partition by means of a Ferrer's
diagram, a finite collection of points arranged in left-justified
rows, with the row lengths in non-increasing order.  

\begin{definition} Let $\curlyeqprec$ denote the box partial order on
$\N\times \N$.  Formally, $(i_1, j_1) \curlyeqprec (i_2,j_2)$ if and
only if $i_1\leq i_2$ and $j_1\leq j_2$.  We now define a Ferrer's
diagram to be a finite subset of $\N\times \N$ \footnote{Throughout the paper we will use the following notation: $\N = \{1,2,\ldots \}$ and
$\Nz=\{ 0,1,2,\ldots \}$.} which is down-closed
with respect to the box order.
\end{definition}

Formally, the correspondence between Ferrer's diagrams and partitions
is as follows.  Given a Ferrer's diagram $F\subset \N\times \N$, 
let 
\[ \lambda_j = \# \{ i\in \N\colon (i,j) \in F\},\quad j=1,2,\ldots
;\]
i.e., $\lambda_j$ is the number of points in row $j$.  Conversely, if
$\lambda$ is a partition, then the corresponding Ferrer's diagram is
given by
\begin{equation}
  \label{eq:Ffromlambda} F = \{ (i,j) \in \N\times \N \colon i \leq
\lambda_j \}.
\end{equation}

\begin{definition} For a partition
$\lambda=(\lambda_1,\dots,\lambda_\ell) $ of length $\ell$, set
  \begin{equation}
    \label{eq:conjlambdadef} \lambda'_j = \# \{ i\in \N \colon
\lambda_i \geq j\},\quad j=1,2,\ldots
  \end{equation} 
\end{definition}
\noindent The resulting sequence $\lambda'$ is called the conjugate
partition of $\lambda$.  The following is well known.
\begin{prop} Let $\lambda, \lambda'$ be as above, and let $F,F'$ be
the corresponding Ferrer's diagrams.  Then $F'$ is the transpose of
$F$, meaning that
  \[ F' = \{ (j,i) \in \N\times \N \colon (i,j) \in F.\}.\]
\end{prop}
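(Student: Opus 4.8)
The plan is to verify directly, at the level of individual boxes, that $F'$ and the transpose of $F$ coincide as subsets of $\N\times\N$. Write $F^{\mathrm{t}}=\{(j,i)\colon (i,j)\in F\}$ for the transpose. First I would unwind the two descriptions using \eqref{eq:Ffromlambda}: applied to $\lambda$ it says that a pair $(a,b)$ lies in $F$ exactly when $a\le\lambda_b$, so $(i,j)\in F^{\mathrm{t}}$ exactly when $(j,i)\in F$, i.e.\ when $j\le\lambda_i$; applied to the conjugate partition $\lambda'$ it says that $(i,j)\in F'$ exactly when $i\le\lambda'_j$. Hence the proposition reduces to the numerical equivalence
\[
  i\le\lambda'_j \iff j\le\lambda_i,\qquad i,j\in\N .
\]

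The one substantive point is that, because $\lambda$ is non-increasing, the set $S_j=\{k\in\N\colon \lambda_k\ge j\}$ is an initial segment of $\N$: if $\lambda_k\ge j$ and $k'\le k$ then $\lambda_{k'}\ge\lambda_k\ge j$. By \eqref{eq:conjlambdadef} this set has cardinality $\lambda'_j$, so $S_j=\{1,2,\dots,\lambda'_j\}$ (the empty set when $\lambda'_j=0$). Therefore $\lambda_k\ge j$ holds precisely when $k\le\lambda'_j$; taking $k=i$ yields the displayed equivalence, and the proof is done.

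I do not expect a genuine obstacle here — the argument is elementary — so the only thing to watch is bookkeeping: the boundary cases $\lambda'_j=0$ or $j>\lambda_1$, and keeping the two indices in the right roles when transposing. A slicker-sounding alternative would be to note that $(i,j)\mapsto(j,i)$ is an involution and that $F$ and $F'$ both have $|\lambda|=|\lambda'|$ boxes, but the box-by-box check above is cleaner and makes the claim transparent once the initial-segment remark is recorded.
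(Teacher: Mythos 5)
Your proof is correct and complete: the paper itself offers no argument here, simply labelling the proposition ``well known,'' so there is nothing to compare against beyond noting that your reduction to the equivalence $i\le\lambda'_j \iff j\le\lambda_i$, justified by the observation that $\{k\in\N\colon \lambda_k\ge j\}$ is an initial segment of cardinality $\lambda'_j$ (finite because $\lambda_k=0$ eventually and $j\ge 1$), is exactly the standard argument one would write down. One caution about your closing aside: the ``slicker'' alternative via the involution $(i,j)\mapsto(j,i)$ and the count $|F|=|F'|=|\lambda|$ is not actually a proof --- two subsets of $\N\times\N$ of equal cardinality need not coincide, so equality of sizes gives no leverage without the box-by-box containment you already established. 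You rightly did not rely on it; I would simply delete that remark.
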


There is another way to represent Maya diagrams, one that makes the
relation to Ferrer's diagrams more explicit, and which will be useful
when we consider the minimal order problem.
\begin{definition} We define a \emph{bent Maya diagram} to be a doubly
infinite sequence $B=\{ (i_n,j_n)\in \Nz\times \Nz\colon n\in \Z\}$
such that 
\[ (i_{n+1},j_{n+1})-(i_n, j_n ) \in \{ (1,0), (0,-1) \} ,\quad n \in
\Z\] and such that $i_n j_n= 0$ for all but finitely many $n$.
\end{definition}
\noindent
Note that since the displacement $(i_n,j_n)\mapsto (i_{n+1},j_{n+1})$
is either down or to the right, the above definition implies that
$i_n=0$ for all $n$ sufficiently small and that $j_n=0$ for all $n$
sufficiently large.

\begin{prop}
  For a Maya diagram   $M\subset \Z$ set
  \begin{equation}
    \label{eq:injndef} 
    i_n = \# \{ m\notin M : m< n \},\qquad j_n = \# \{
    m\in M : m\geq n \},\quad n\in \Z.
  \end{equation} 
  Then, the doubly infinite sequence $B=\{(i_n,j_n)\}_{n\in \Z}$ is a
  bent Maya diagram.
\end{prop}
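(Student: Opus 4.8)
The plan is to verify directly, from the defining formulas \eqref{eq:injndef}, the three conditions that make $B=\{(i_n,j_n)\}_{n\in\Z}$ a bent Maya diagram: that each $(i_n,j_n)$ lies in $\Nz\times\Nz$, that consecutive terms differ by $(1,0)$ or $(0,-1)$, and that $i_nj_n=0$ for all but finitely many $n$.

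First I would check well-definedness. Since $M$ is a Maya diagram, $\Z\setminus M$ contains only finitely many negative integers; hence, for each fixed $n$, the set $\{m\notin M:m<n\}$ is finite (its negative part sits inside the finite set of negatives missing from $M$, and it contains at most $n$ further elements when $n>0$). Likewise $M$ contains only finitely many positive integers, so $\{m\in M:m\ge n\}$ is finite. Therefore $i_n$ and $j_n$ are well-defined non-negative integers and $(i_n,j_n)\in\Nz\times\Nz$ for every $n\in\Z$.

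The heart of the argument is the one-step displacement. Splitting off the term $m=n$ in each count gives the disjoint decompositions $\{m\notin M:m<n+1\}=\{m\notin M:m<n\}\cup\{m\notin M:m=n\}$ and $\{m\in M:m\ge n\}=\{m\in M:m\ge n+1\}\cup\{m\in M:m=n\}$, whence
\[
i_{n+1}-i_n=\#\{m\notin M:m=n\},\qquad j_{n+1}-j_n=-\#\{m\in M:m=n\}.
\]
Exactly one of the alternatives $n\in M$, $n\notin M$ holds: if $n\notin M$ the displacement is $(1,0)$, and if $n\in M$ it is $(0,-1)$. In either case $(i_{n+1},j_{n+1})-(i_n,j_n)\in\{(1,0),(0,-1)\}$, as required. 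Finally, $\Z\setminus M$ is bounded below (its negative part is finite), so there is $N_0$ with $i_n=0$ for all $n\le N_0$; and $M$ is bounded above (it has only finitely many positive elements), so there is $N_1$ with $j_n=0$ for all $n>N_1$. Thus $i_nj_n=0$ outside the finite window $N_0<n\le N_1$, and $B$ is a bent Maya diagram.

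I do not expect any genuine obstacle here: the statement amounts to a bookkeeping translation of the defining property of $M$ (finitely many positive elements, finitely many missing negatives) into the ``$i_n=0$ for $n$ small, $j_n=0$ for $n$ large'' shape of a bent diagram. The only point needing mild care is confirming that the counted sets are finite before manipulating their cardinalities, which is exactly what the Maya-diagram axioms guarantee.
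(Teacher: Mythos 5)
Your proof is correct and follows essentially the same route as the paper: the key case split on whether $n\in M$ or $n\notin M$ to get the displacement $(0,-1)$ or $(1,0)$, and the observation that $i_n=0$ for $n$ sufficiently small and $j_n=0$ for $n$ sufficiently large. You merely add an explicit (and harmless) check that the counted sets are finite, which the paper leaves implicit.
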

\begin{proof}
  By assumption, there exists an $N>0$ such that $n\notin M$ for all
  $n\geq N$ and such that $n\in M$ for all $n\leq-N$.  Hence, $j_n=0$
  for all $n\geq N$ and $i_n=0$ for all $n\leq -N$.  If $n\in M$, then
  $(i_{n+1},j_{n+1})=(i_n,j_n-1)$.  If $n\notin M$, then
  $(i_{n+1},j_{n+1})=(i_n+1,j_{n+1})$.  Therefore, in both cases the
  defining condition of a bent diagram is satisfied.
\end{proof}
\noindent Informally, a bent Maya diagram is a 2-dimensional
representation of a Maya diagram, with a filled box at position $n$
corresponding to a unit downward displacement $(0,-1)$ and an empty
box corresponding to a unit rightward displacement $(1,0)$, as depicted in Figure \ref{fig:maya-partition}. A
translation $M'= M-k$ corresponds to an index shift in the
bent diagram: 
\begin{equation}
  \label{eq:injnshift}
   (i'_n,j'_n) = (i_{n+k},j_{n+k}).
\end{equation}


There is a connection between bent diagrams and Ferrer's diagrams.
\begin{definition} Let $F\subset \N\times \N$ be a Ferrer's diagram.
We define the \emph{rim} of $F$ to be subset
\[F'=\{ (i,j)\in F \colon (i+1,j+1)\notin F\}.\]
\end{definition}

\begin{prop} 
  \label{prop:bentmaya}
  Let $M\subset \Z$ be a Maya diagram, $B$ the corresponding bent
  diagram defined by \eqref{eq:injndef}, $\lambda$ the corresponding
  partition defined by \eqref{eq:lambdafromM}, and $F$ the
  corresponding Ferrer's diagram defined by
  \eqref{eq:Ffromlambda}. Then, $F' = B\cap (\N\times \N)$.
\end{prop}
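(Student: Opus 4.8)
The plan is to compute both sides directly as subsets of $\N\times\N$ and verify that they coincide, using only the defining formulas \eqref{eq:lambdafromM}, \eqref{eq:Ffromlambda} and \eqref{eq:injndef} (no standard-form normalization is needed). Write $m_1>m_2>\cdots$ for the elements of $M$ and $\mu_1<\mu_2<\cdots$ for its non-elements; both sequences are genuinely infinite, since by the definition of a Maya diagram $M$ contains all sufficiently negative integers and omits all sufficiently positive ones. It is convenient to set $m_0=+\infty$ and $\mu_0=-\infty$.

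First I would describe the level sets of the two step functions in \eqref{eq:injndef}. Because $i_n=\#\{m\notin M:m<n\}$ increases by exactly one as $n$ crosses each non-element, one has, for $a\ge 1$, that $i_n=a$ precisely when $\mu_a<n\le\mu_{a+1}$; dually, for $b\ge 1$, $j_n=b$ precisely when $m_{b+1}<n\le m_b$. Hence, for $a,b\ge 1$, the point $(a,b)$ belongs to $B$ if and only if the two nonempty integer intervals $\{\mu_a+1,\dots,\mu_{a+1}\}$ and $\{m_{b+1}+1,\dots,m_b\}$ intersect, which happens exactly when $\mu_a<m_b$ and $m_{b+1}<\mu_{a+1}$.

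Next I would convert these inequalities into statements about $\lambda$. By \eqref{eq:lambdafromM}, $\lambda_b$ equals the number of non-elements of $M$ smaller than $m_b$, so, the $\mu_k$ being increasing, $\{k:\mu_k<m_b\}=\{1,\dots,\lambda_b\}$ and therefore $\mu_a<m_b\iff a\le\lambda_b$. Applying the same reasoning to $m_{b+1}$, and using that $m_{b+1}\in M$ is never one of the $\mu_k$, gives $\mu_{a+1}>m_{b+1}\iff a+1>\lambda_{b+1}\iff a\ge\lambda_{b+1}$. Combining, for $a,b\ge 1$ we obtain $(a,b)\in B\iff \lambda_{b+1}\le a\le\lambda_b$. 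Finally, by \eqref{eq:Ffromlambda} the inequality $a\le\lambda_b$ means $(a,b)\in F$, and the inequality $a\ge\lambda_{b+1}$ means $(a+1,b+1)\notin F$; since any point of $F$, and in particular of the rim $F'$, automatically has both coordinates $\ge 1$, this shows $B\cap(\N\times\N)=F'$.

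The only real bookkeeping is in the first step — identifying the level sets of $i_n$ and $j_n$ and checking that the two intervals overlap exactly under the stated inequalities — where the pitfall is keeping the various $\pm1$ shifts consistent; the rest is formal. A useful sanity check, and the picture underlying the statement, is that $B$ follows the boundary staircase of the Ferrers region of $F$: it comes down the vertical axis to the corner $(0,\ell)$, traverses the staircase separating $F$ from its complement down to the corner $(\lambda_1,0)$, and then runs off along the horizontal axis, so that intersecting with $\N\times\N$ simply removes the two coordinate rays and leaves precisely the cells $(i,j)\in F$ with $(i+1,j+1)\notin F$.
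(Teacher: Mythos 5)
Your proof is correct. The forward inclusion ($B\cap(\N\times\N)\subseteq F'$) is in substance the same computation the paper makes: the paper fixes $n$ with $i_n,j_n>0$, locates $n$ via $m_{j_n+1}<n\le m_{j_n}$, and sandwiches $i_n$ between $\lambda_{j_n+1}$ and $\lambda_{j_n}$ by comparing the sets $\{m\notin M:m<\cdot\}$ — which is exactly your level-set bookkeeping in different clothing. Where you genuinely diverge is the reverse inclusion: the paper proves it by contradiction, selecting a point of $F'\setminus B$ with $j$ minimal and then $i$ maximal and running a two-case analysis on whether $i<\lambda_j$ or $i=\lambda_j$, with a separate treatment of the boundary point $(\lambda_1,1)$. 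You avoid all of that by upgrading the forward computation to a biconditional: identifying the fiber of $(a,b)$ under $n\mapsto(i_n,j_n)$ as the intersection of the two integer intervals $(\mu_a,\mu_{a+1}]$ and $(m_{b+1},m_b]$, whose nonemptiness is equivalent to $\lambda_{b+1}\le a\le\lambda_b$, i.e.\ to $(a,b)\in F'$. This buys you a single uniform argument with no case analysis and no extremal choice, at the modest cost of introducing the increasing enumeration $\mu_1<\mu_2<\cdots$ of $\Z\setminus M$ and tracking the off-by-one conventions carefully (the step using that $m_{b+1}\notin\{\mu_k\}$ to pass from $\mu_{a+1}>m_{b+1}$ to $a\ge\lambda_{b+1}$ is the one place where strictness matters, and you handle it correctly). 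Both proofs are complete; yours is the tighter of the two.
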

\noindent
Thus, the rim is that finite subset of the bent diagram whose points
have non-zero coordinates.

\begin{proof} 
  We first show that the non-zero part of $B$ lies in $F'$.
  Suppose that $i_n,j_n>0$. Let $m_1>m_2>\cdots$ be the elements of
  $M$ in decreasing order.  By \eqref{eq:injndef},
  $m_{j_n+1}< n \leq m_{j_n}$.  Since
  \[ \{ m\notin M \colon m< m_{j_n+1}\}\subset \{ m\notin M \colon
  m<n\}\subset \{ m\notin M \colon m< m_{j_n}\}\]
  it follows that $\lambda_{j_n+1}\leq i_n \leq \lambda_{j_n}$.
  Therefore by \eqref{eq:Ffromlambda}, $(i_n,j_n)\in F$ but
  $(i_{n}+1,j_{n}+1)\notin F$.

  We now prove the converse.  Arguing by contradiction, suppose that
  there is an $(i,j)\in F'$ which does not belong to $B$.  In
  $F'\setminus B$ choose the points with $j$ as small as possible, and
  of those choose the point that has $i$ as large as possible. By
  assumption, $i\leq \lambda_j$.  We consider two cases.

  Case 1: assume that $i<\lambda_j$.  Then $i+1\leq \lambda_j$, which
  means that $(i+1,j)\in F$.  Since $(i+1,j+1) \notin F$, the same is
  true for $(i+2,j+1)$.  Hence, $(i+1,j)\in F'$ also.  Because of the
  assumed maximality of $i$, we must have $(i+1,j)\in B$;
  i.e. $(i+1,j)=(i_n,j_n)$ for some $n\in \Z$. By the definition of a
  bent diagram, $(i_{n-1},j_{n-1})$ is either $(i_n,j_n+1)=(i+1,j+1)$
  or $(i_n-1,j_n)= (i,j)$.  The second possibility is excluded because
  we have assumed that $(i,j)\notin B$.  By the first part of the
  proof, $(i_{n-1},j_{n-1})\in F'$.  This means that $(i+1,j+1)\in F$,
  which contradicts the assumption that $(i,j)\in F'$.

  Case 2: assume that $i=\lambda_j$.  If $j>1$, then
  $i\leq \lambda_{j-1}$ which means that $(i,j-1)\in F$.  By
  assumption, $(i+1,j)\notin F$.  Hence $(i,j-1)\in F'$ and hence
  $(i,j-1)\in B$ by the minimality of $j$.  We now repeat the above
  argument to conclude that $(i,j)\in B$ also --- a
  contradiction. Hence $j=1$ and $i=\lambda_1$.  Set $n=\max M$.  By
  \eqref{eq:lambdafromM}, $i_n=\lambda_1$ and $j_n=1$. This
  contradicts the assumption that $(i,j)\notin B$.
\end{proof}
\begin{definition}
For a given Maya diagram $M\subset \Z$, define
\[ M_+ = \{ m\in M \colon m\geq 0\},\qquad M_-=\{ -m-1 \colon m<0,
m\notin M\}.\]
\end{definition}
In other words, $M_+$ gives positions of the filled boxes to the right
of the origin, and $M_-$ the positions of the holes to the left of the
origin.  The numbers in $M_+$ and $M_-$ indicate distance to the
origin, with $0$ indicating a position adjacent to the origin. Since
$M_+$ and $M_-$ fully define $M$, the defining assumptions of a Maya
diagram are equivalent to the condition that $M_+$ and $M_-$ should be
finite subsets of $\Nz$.

\begin{definition}
  Let $M\subset \Z$ be a Maya diagram. Let $\{s_1,\dots,s_p\},\,  p=i_0,$
  be the elements of $M_-$ and $\{t_1,\dots,t_q\},\, q=j_0,$ the
  elements of $M_+$, arranged in decreasing order.  The double list
   $(s_1,\ldots, s_p \mid t_1,\ldots, t_q)$ is called the
 \textit{Frobenius symbol of $M$ }\cite{olsson}.
\end{definition}

The classical Frobenius symbol \cite{andrews,olsson,andrews2,andrews3}
corresponds to the case where $i_0=j_0$; i.e. the case where $M_-$ and
$M_+$ have the same cardinality.  Such a choice of origin can be
visualized as the unique intersection of the rim and the main diagonal
in $\N\times \N$.

\begin{figure}
\begin{center}
\includegraphics[width=0.4\textwidth]{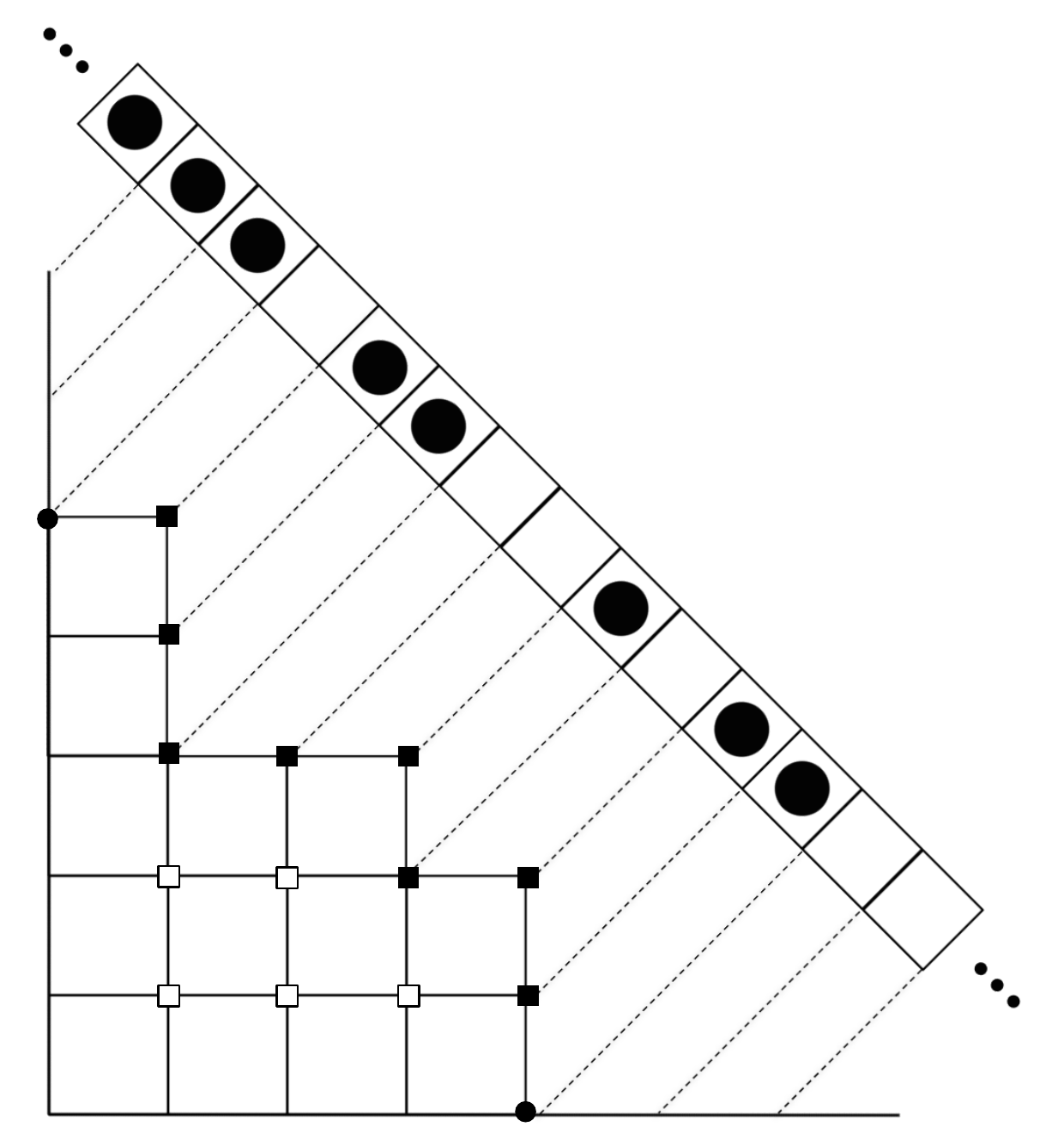}
\caption{Correspondence between an unlabelled Maya diagram and a
partition $\lambda=(4,4,3,1,1)$. The squares are points that belong to
the Ferrer diagram $F$, while black squares belong to the rim of $F$.}
\label{fig:maya-partition}
\end{center}
\end{figure}

Let us also note the following connection between the Frobenius symbol and bent diagrams. 
\begin{prop}
  Let $M\subset \Z$ be a Maya diagram and  $B=\{ (i_n,j_n)\}_{n\in
    \Z}$ the corresponding bent diagram. Then, $i_n$ and $j_n$ are the
  cardinalities of $(M-n)_-$ 
and $(M-n)_+$, respectively.
\end{prop}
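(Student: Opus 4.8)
The plan is to observe that the asserted identity is, after one carefully applies the definitions, a purely bookkeeping statement comparing $(M-n)_-,\ (M-n)_+$ with the formulas \eqref{eq:injndef} for $i_n,\ j_n$. Before starting it is worth recording (as already remarked in the text) that $M-n$ is again a Maya diagram, so that $(M-n)_-$ and $(M-n)_+$ are finite subsets of $\Nz$ and their Frobenius symbol is well defined; in particular all the cardinalities below are finite.

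The case of $j_n$ is immediate: by the definition of the shift and of the ``$+$'' part, $(M-n)_+=\{\,m-n\colon m\in M,\ m-n\geq 0\,\}=\{\,m-n\colon m\in M,\ m\geq n\,\}$, and the translation $m\mapsto m-n$ restricts to a bijection from $\{\,m\in M\colon m\geq n\,\}$ onto $(M-n)_+$. Hence $\#(M-n)_+=\#\{\,m\in M\colon m\geq n\,\}=j_n$ by \eqref{eq:injndef}. For $i_n$ the only subtlety is the sign flip built into the definition of $M_-$. Writing $(M-n)_-=\{\,-k-1\colon k<0,\ k\notin M-n\,\}$ and substituting $m=k+n$ (so that $k\notin M-n$ is equivalent to $m\notin M$, and $k<0$ is equivalent to $m<n$) gives $(M-n)_-=\{\,n-m-1\colon m\notin M,\ m<n\,\}$; the affine map $m\mapsto n-m-1$ is a bijection, so $\#(M-n)_-=\#\{\,m\notin M\colon m<n\,\}=i_n$, again by \eqref{eq:injndef}. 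A slightly slicker equivalent route is to prove only the $n=0$ instances of these two identities directly from \eqref{eq:injndef} and then invoke the index-shift formula \eqref{eq:injnshift}: applying \eqref{eq:injnshift} with $k=n$ and reading off index $0$ shows that the bent diagram of $M'=M-n$ has $i'_0=i_n$, $j'_0=j_n$, while the $n=0$ case gives $i'_0=\#M'_-$ and $j'_0=\#M'_+$.

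There is no genuine obstacle here — the proof is entirely elementary. The single point that requires care is keeping the sign convention of $M_-$ (the ``distance to the origin of the holes on the left'') consistent throughout, and making sure the substitution $m=k+n$ is applied to the membership condition $k\notin M-n$ rather than to $k\in M-n$.
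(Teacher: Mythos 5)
Your proof is correct; the ``slicker equivalent route'' you sketch at the end (establishing the $n=0$ case directly from \eqref{eq:injndef} and then invoking the index-shift formula \eqref{eq:injnshift}) is precisely the paper's own two-line proof. Your main argument, the explicit translation/reflection bijections for general $n$, is just a more spelled-out unwinding of the same definitions, with the sign convention in $M_-$ handled correctly.
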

\begin{proof}
  By \eqref{eq:injndef}, $i_0$ is the cardinality of $M_-$ and $j_0$
  the cardinality of $M_+$.  The general relation follows by
  \eqref{eq:injnshift}.
\end{proof}

\section{Hermite pseudo-Wronskians}\label{sec:pW}
In this section we will associate to each labelled Maya diagram a
certain determinant whose entries are Hermite polynomials. We then prove
that determinants associated to equivalent Maya
diagrams are proportional to each other. This is our main result in
this Section.  

For $n\geq 0$, let
\begin{equation}
  \label{eq:Hndef}
  \h_n(x) = (-1)^n e^{x^2} D_x^n e^{-x^2},\quad D_x=\frac{d}{dx}
\end{equation}
denote the degree $n$ Hermite polynomial, and
\begin{equation}
  \label{eq:thndef}
  \th_n(x)={\rm i}^{-n} \h_{n}({\rm i}x)
\end{equation}
the conjugate Hermite polynomial.  Recall that $y=\h_n$ is a
solution of the Hermite differential equation
\begin{equation}
  \label{eq:hermiteDE}
  y''-2x y' + 2n y=0
\end{equation}
for $n\geq 0$,
and that $y=e^{x^2} \th_{-n-1}$ is a solution of \eqref{eq:hermiteDE}
for $n<0$.

\begin{definition}\label{def:HpW}
  Let $M\subset \Z$ be a Maya diagram. Let $\{s_1,\dots,s_p\}$ be the
  elements of $M_-$ and $\{t_1,\dots,t_q\}$ the elements of $M_+$, both
  arranged in descending order. We define the \textit{Hermite
    pseudo-Wronskian} associated to $M$ to be
  \begin{equation}\label{eq:pWdef1}
    \hM = e^{-px^2}\Wr[ e^{x^2} \th_{s_1},\ldots, e^{x^2}
    \th_{s_p}, \h_{t_q},\ldots \h_{t_1} ],
  \end{equation}
  where $\Wr$ denotes the Wronskian determinant of the indicated
  functions.
\end{definition}
\noindent The polynomial nature of $\hM$ becomes evident once we
represent it using a slightly different determinant.

\begin{prop}
  A Hermite pseudo-Wronskian admits the following alternative
  determinantal representation
  \begin{equation}\label{eq:pWdef2}
    \hM =
    \begin{vmatrix}
      \th_{s_1} & \th_{s_1+1} & \ldots & \th_{s_1+p+q-1}\\
      \vdots & \vdots & \ddots & \vdots\\
      \th_{s_p} & \th_{s_p+1} & \ldots & \th_{s_p+p+q-1}\\
      \h_{t_q} & D_x \h_{t_q} & \ldots & D_x^{p+q-1}\h_{t_q}\\
      \vdots & \vdots & \ddots & \vdots\\
      \h_{t_1} & D_x \h_{t_1} & \ldots & D_x^{p+q-1}\h_{t_1}
    \end{vmatrix}
  \end{equation}
\end{prop}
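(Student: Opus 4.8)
The plan is to show that the two determinants in \eqref{eq:pWdef1} and \eqref{eq:pWdef2} agree by performing column operations that convert the Wronskian matrix into the matrix displayed in \eqref{eq:pWdef2}. The key observation is that both determinants are built from the same $p+q$ rows, split into a ``conjugate block'' coming from the functions $e^{x^2}\th_{s_a}$ and an ``ordinary block'' coming from $\h_{t_b}$, and that the only difference between the two expressions lies in how the columns encode differentiation.

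First I would record the elementary fact that the Wronskian is invariant under right-multiplication of the coefficient matrix by a fixed unipotent upper-triangular matrix; concretely, replacing the column of $k$-th derivatives $D_x^k f$ by any fixed linear combination $D_x^k f + \sum_{j<k} c_{kj} D_x^j f$ (the $c_{kj}$ independent of the row) leaves the determinant unchanged, since such operations factor through a determinant-one change of basis applied simultaneously to all rows. Thus it suffices to choose, for each column index $k=0,1,\ldots,p+q-1$, a single differential operator $P_k(D_x) = D_x^k + (\text{lower order})$ and show that applying $P_k(D_x)$ to the $a$-th conjugate seed $e^{x^2}\th_{s_a}$ produces (up to an overall nonzero constant pulled out of that row, which is absorbed by the $e^{-px^2}$ prefactor) exactly $\th_{s_a + k}$, while applying the same $P_k(D_x)$ to $\h_{t_b}$ produces $D_x^k \h_{t_b}$ plus lower-order derivatives — i.e.\ the ordinary block is left alone up to the allowed column operations.

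The technical heart is therefore a pair of identities for the action of a suitable operator on the two types of seed function. For the ordinary Hermite polynomials one uses the three-term/derivative recursions for $\h_n$ — equivalently the fact that $e^{-x^2}\h_n = (-1)^n D_x^n e^{-x^2}$, so that $D_x^k$ of $\h_n$ is manifestly a polynomial of degree $n+k$ and the leading term in the column is $D_x^k\h_{t_b}$ as required. For the conjugate polynomials one notes from \eqref{eq:thndef} and the defining ODE that $e^{x^2}\th_{s}$ is the ``lowering'' branch of solutions of \eqref{eq:hermiteDE}, and a direct computation (most cleanly: differentiate $e^{x^2}\th_s = e^{x^2}{\rm i}^{-s}\h_s({\rm i}x)$ and use $\th_s' = 2s\,\th_{s-1}$ together with $\th_{s+1} = 2x\th_s + 2s\th_{s-1}$, or the integral-like identity $D_x(e^{x^2}\th_{s}) = e^{x^2}\th_{s+1}$) shows that $D_x$ sends $e^{x^2}\th_s$ to $e^{x^2}\th_{s+1}$, hence $D_x^k(e^{x^2}\th_{s_a}) = e^{x^2}\th_{s_a+k}$ exactly, with no lower-order corrections needed. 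Pulling the common factor $e^{x^2}$ out of each of the $p$ conjugate rows contributes $e^{px^2}$, which cancels the prefactor $e^{-px^2}$ and turns \eqref{eq:pWdef1} into \eqref{eq:pWdef2}.

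The main obstacle I anticipate is purely bookkeeping rather than conceptual: one must check that the column operations which are needed to ``clean up'' the ordinary block (so that its entries become literally $D_x^k\h_{t_b}$ rather than $P_k(D_x)\h_{t_b}$) do not disturb the conjugate block — but this is automatic, because $D_x^k(e^{x^2}\th_{s_a}) = e^{x^2}\th_{s_a+k}$ already has the desired form and any further subtraction of lower columns would only reintroduce lower $\th$'s; the correct statement is that no cleanup is needed at all, since $D_x^k$ itself already works for the conjugate block, and the ordinary block entries $D_x^k\h_{t_b}$ are exactly what \eqref{eq:pWdef2} asks for. Hence the two matrices are literally equal after extracting $e^{px^2}$, and the only thing to verify carefully is the single identity $D_x(e^{x^2}\th_s) = e^{x^2}\th_{s+1}$, which follows from $\th_s'(x) = 2s\,\th_{s-1}(x)$ and the contiguous relation for conjugate Hermite polynomials, or equivalently from $\h_{n}'= 2n\h_{n-1}$ and $\h_{n+1}=2x\h_n-2n\h_{n-1}$ under the substitution $x\mapsto {\rm i}x$. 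Finally, polynomiality of $\hM$ is then immediate from \eqref{eq:pWdef2}, since every entry of that determinant is a polynomial in $x$.
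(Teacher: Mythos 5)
Your proposal is correct and takes essentially the same route as the paper: everything reduces to the single identity $D_x(e^{x^2}\th_n)=e^{x^2}\th_{n+1}$, applied repeatedly to turn the conjugate rows of the Wronskian into $e^{x^2}\th_{s_a+k}$, followed by extracting the factor $e^{x^2}$ from each of the $p$ conjugate rows so that $e^{px^2}$ cancels the prefactor $e^{-px^2}$, while the ordinary rows are already literally $D_x^k\h_{t_b}$. The preliminary framing in terms of unipotent column operations is scaffolding that you yourself correctly discard, since no column cleanup is in fact needed.
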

\begin{proof}
  The desired conclusion follows by the fundamental identities
  \begin{equation}
    \label{eq:hermids}
    \begin{aligned}    \noindent
      &D_x \h_n(x) = 2n \h_{n-1}(x),\quad n\geq 0,\\     \noindent
      &D_x \th_n(x) = 2n \th_{n-1}(x),\quad n\geq 0,\\     \noindent
      &2x \h_n(x) = \h_{n+1}(x) + 2n \h_{n-1}(x),\\     \noindent
      &2x \th_n(x) = \th_{n+1}(x) - 2n \th_{n-1}(x),\\
      &D_x (e^{x^2} \th_n(x)) 
        = e^{x^2}\th_{n+1}(x),\\
      &D_x (e^{-x^2} h_n(x)) = -e^{-x^2} h_{n+1}(x).
    \end{aligned}
  \end{equation}
  and the Wronskian identity
  \begin{equation}
    \label{eq:Wrhomog}
    \Wr[g f_1,\ldots, g f_s] = g^s \Wr[f_1,\ldots, f_s], 
  \end{equation}
\end{proof}

We refer to $\hM$ as a pseudo-Wronskian because it is constructed by
means of a modified Wronskian operator that replaces $D_x$ with an
indicial shift for the rows with the conjugate Hermites. More
specifically, if $j\in M_+$, then the $(j,k)$ entry is
$D_x^{(k-1)}H_j$, as in the ordinary Wronskian. If $i\in M_-$, then
the $(i,k)$ entry is the conjugate Hermite polynomial $\th_{i+k-1}$.

\begin{example}\label{ex:pw}
  Consider the first Maya diagram in Figure \ref{fig:equivM}.  The
  Frobenius symbol is $(5,2,1\mid 2,1)$.  The Hermite pseudo-Wronskian
  $H_M$ associated to $M$ is given by
  \[
  \hM= {\rm e}^{-3x^2} \Wr[{\rm e}^{x^2} \th_{5},{\rm e}^{x^2} \th_{2},{\rm e}^{x^2} \th_{1},H_1,H_2 ]=  \begin{vmatrix}
    \th_{5} & \th_{6} & \th_{7} &\th_{8} & \th_{9}\\
    \th_{2} & \th_{3} & \th_{4} &\th_{5} & \th_{6}\\
    \th_{1} & \th_{2} & \th_{3} &\th_{4} & \th_{5}\\
    \h_{1} & \h_{1}' & \h_{1}'' & \h_{1}''' & \h_{1}^{(4)}\\
    \h_{2} &  \h_{2}' & \h_{2}'' & \h_{2}''' & \h_{2}^{(4)}
  \end{vmatrix}
  \]

\end{example}

The main result of this section is the following class of fundamental
determinantal identities enjoyed by these polynomials
\begin{thm}\label{thm:detequiv}
  Let $M$ and $M'=M-k,\; k>0$ be two equivalent Maya diagrams.  Set
  \begin{align*}
    E_k&= \{ m\in M \colon 0\leq m < k \} 
    & G_k&= \{ m\notin M \colon 0\leq m < k \}\\
    \epsilon_i &= (-1)^{\# \{ m\notin M \colon m<i \}} \prod_{m\in M\atop m>i} (2m-2i)   
    &  \gamma_i &= (-1)^{\# \{ m\in M \colon m>i \}} \prod_{m\notin M\atop  m<i} (2m-2i)
  \end{align*}
  Then,
  \begin{equation}
    \label{eq:detequiv}
     \left(\prod_{i\in G_k} \gamma_i\right) H_{M'} = \left(\prod_{i\in
      E_k} \epsilon_i \right) H_M
  \end{equation}
\end{thm}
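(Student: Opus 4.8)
The plan is to reduce the statement to the unit shift $k=1$ and then prove that case by an elementary manipulation of the determinant \eqref{eq:pWdef2}, powered by the first--order Hermite ladder relations \eqref{eq:hermids}. \emph{Reduction to $k=1$.} Set $M^{(j)}=M-j$ for $0\le j\le k$, so that $M^{(0)}=M$, $M^{(k)}=M'$ and $M^{(j+1)}=M^{(j)}-1$. Since $0\in M^{(j)}\iff j\in M$, the window $\{0,\dots,k-1\}$ is partitioned into $E_k$ (those $j$ with $0\in M^{(j)}$) and $G_k$ (those $j$ with $0\notin M^{(j)}$). Granting the $k=1$ case, each consecutive pair $(M^{(j)},M^{(j+1)})$ contributes a two--term identity between $H_{M^{(j)}}$ and $H_{M^{(j+1)}}$ whose proportionality constant is $\epsilon_0$ (if $j\in E_k$) or $\gamma_0$ (if $j\in G_k$), each evaluated for the diagram $M^{(j)}$. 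A direct inspection of the defining formulas, after the substitution $m\mapsto m-j$, yields the locality identities: the value of $\epsilon_0$ for $M-j$ equals $\epsilon_j$ for $M$, and similarly for $\gamma$. Multiplying the $k$ unit--shift identities and telescoping the interior terms $H_{M^{(1)}},\dots,H_{M^{(k-1)}}$ then assembles the scalars into $\prod_{i\in E_k}\epsilon_i$ and $\prod_{i\in G_k}\gamma_i$ and produces \eqref{eq:detequiv}.

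\emph{The case $k=1$, $0\in M$.} Here $M'=M-1$, the smallest entry of $M_+$ is $t_q=0$, and $\h_{t_q}=\h_0=1$ occurs among the functions in \eqref{eq:pWdef1}. From the definition of $M_\pm$ one reads off $M'_-=\{s+1:s\in M_-\}$ and $M'_+=\{t-1:t\in M_+,\ t\ge1\}$, so that $p'=p$ and $q'=q-1$. I would then bring the constant function to the front of the Wronskian (sign $(-1)^p$), apply the elementary identity $\Wr[1,f_1,\dots,f_s]=\Wr[f_1',\dots,f_s']$ (obtained by expanding the determinant along its first row), and differentiate every remaining row using $D_x(e^{x^2}\th_n)=e^{x^2}\th_{n+1}$ and $D_x\h_n=2n\h_{n-1}$ from \eqref{eq:hermids}. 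Factoring the scalars $2t_i$ out of the determinant by multilinearity leaves exactly the Wronskian defining $\hMp$; collecting the prefactor and matching it with the definition of $\epsilon_0$ gives the $k=1$ instance of \eqref{eq:detequiv} in this case ($E_1=\{0\}$, $G_1=\emptyset$).

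\emph{The case $k=1$, $0\notin M$.} Now the unit shift creates a new hole immediately to the left of the origin, so $0$ becomes the smallest entry of $M'_-$; explicitly $M'_-=\{0\}\cup\{s+1:s\in M_-\}$ and $M'_+=\{t-1:t\in M_+\}$, i.e. $p'=p+1$, $q'=q$, and it is now $\hMp$ whose Wronskian contains $e^{x^2}\th_0=e^{x^2}$. The trick is to write every Hermite entry as $\h_n=e^{x^2}(e^{-x^2}\h_n)$ and pull the common factor $e^{x^2}$ out of all $p'+q'$ columns via \eqref{eq:Wrhomog}; this turns the offending entry into the constant $1$ and reduces the problem to the previous situation, now using $D_x\th_n=2n\th_{n-1}$ and $D_x(e^{-x^2}\h_n)=-e^{-x^2}\h_{n+1}$. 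After factoring the scalars $2(s_i+1)$ and restoring the Gaussian gauge one recovers $\hM$ up to a scalar equal to $\gamma_0$, giving the $k=1$ identity (now $E_1=\emptyset$, $G_1=\{0\}$). Equivalently, this case can be deduced from the first by the particle--hole conjugation $M\mapsto\{-1-m:m\notin M\}$, which interchanges $M_+\leftrightarrow M_-$, exchanges the roles of $\h$ and $\th$ through $x\mapsto{\rm i}x$, and swaps $\epsilon\leftrightarrow\gamma$.

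\emph{Where the difficulty lies.} The analytic content is minimal: a single determinant identity together with the first--order Hermite recursions. The real work is the combinatorial and sign bookkeeping: one must (i) track carefully how $M_\pm$, and hence the size $p+q$, transforms under the unit shift (it drops by one in the first case and grows by one in the second), (ii) check that the manipulation produces \emph{exactly} $\epsilon_0$ (resp. $\gamma_0$), with the right sign, rather than merely a proportionality, and (iii) verify that composing the $k$ unit--shift identities recombines the one--step scalars into the products $\prod_{i\in E_k}\epsilon_i$ and $\prod_{i\in G_k}\gamma_i$, which is exactly where the locality identities $\epsilon_j=\epsilon_0(M-j)$, $\gamma_j=\gamma_0(M-j)$ are needed.
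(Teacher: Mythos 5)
Your plan is correct and follows essentially the same route as the paper: the paper proves exactly your two unit-shift cases as separate lemmas (the $0\in M$ case via $H_0=1$ and $\Wr[1,f_1,\dots,f_s]=\Wr[f_1',\dots,f_s']$; the $0\notin M$ case by first stripping the Gaussian gauge with \eqref{eq:Wrhomog} and using $D_x(e^{-x^2}H_n)=-e^{-x^2}H_{n+1}$), and then telescopes, leaving the identification of the one-step constants with $\epsilon_i$ and $\gamma_i$ as the same routine verification you isolate in your items (ii)--(iii). The only point deserving extra care is the one you already flag: pinning down on which side of the one-step identity each scalar lands so that the telescoped product matches the placement of $\prod\epsilon_i$ and $\prod\gamma_i$ in \eqref{eq:detequiv}.
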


\begin{proof}
  Once the following two Lemmas are established, it suffices to verify
  that the factors shown in \eqref{eq:shift1} and \eqref{eq:shift2}
  below are equal to the above-defined $\gamma_i$ and $\epsilon_i$
  symbols, respectively.
\end{proof}
Throughout, $\{s_1,\dots,s_p\} $ and $\{t_1,\dots,t_q\}$
are, respectively, the elements of $M_-$ and $M_+$ arranged in
descending order.
\begin{lem}
  \label{lem:shift1}
  Suppose that $M'=M-1$ and that $0\in M$.   Then,
  \begin{equation}
    \label{eq:shift1}
    H_M = (-1)^{p}\,2^{q-1}\left( \prod_{b=1}^{q-1} t_b\right) H_{M'}.
  \end{equation}
\end{lem}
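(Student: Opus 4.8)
The plan is to work directly with the Wronskian form \eqref{eq:pWdef1} of $\hM$ and track exactly how the seed functions change when the origin is shifted by one unit under the hypothesis $0\in M$. When $0\in M$, the element $0$ is the smallest element of $M_+$, so $t_q = 0$, and $M_-$ is unchanged by the shift in the sense that $(M-1)_- = M_-$ while $(M-1)_+ = \{t_1-1,\dots,t_{q-1}-1\}$; in particular the box at position $0$ leaves the ``filled, right of origin'' list and no new hole appears to the left. Concretely, writing out the two Wronskians, $\hM$ uses the seed functions $e^{x^2}\th_{s_1},\dots,e^{x^2}\th_{s_p},\h_{0},\h_{t_{q-1}},\dots,\h_{t_1}$ with a prefactor $e^{-px^2}$, whereas $H_{M'}$ uses $e^{x^2}\th_{s_1},\dots,e^{x^2}\th_{s_p},\h_{t_{q-1}-1},\dots,\h_{t_1-1}$ with the same prefactor $e^{-px^2}$ (the count $p$ of $M_-$ elements is unchanged).

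The first key step is to observe that $\h_0 = 1$ is constant, so in the Wronskian $\Wr[e^{x^2}\th_{s_1},\dots,e^{x^2}\th_{s_p},\h_0,\h_{t_{q-1}},\dots,\h_{t_1}]$ the row corresponding to $\h_0$ has a single nonzero entry (the first), and expanding along that row reduces the $(p+q)\times(p+q)$ determinant to the $(p+q-1)\times(p+q-1)$ Wronskian of the derivatives of the remaining functions, namely $\Wr[D_x(e^{x^2}\th_{s_1}),\dots,D_x(e^{x^2}\th_{s_p}),D_x\h_{t_{q-1}},\dots,D_x\h_{t_1}]$, up to the sign $(-1)^{p}$ coming from moving the $\h_0$-row past the $p$ rows above it (equivalently, the position of the single nonzero entry in the cofactor expansion). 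The second key step is to apply the fundamental identities \eqref{eq:hermids}: $D_x(e^{x^2}\th_n) = e^{x^2}\th_{n+1}$ for the $\th$-rows, and $D_x\h_{t_b} = 2t_b\,\h_{t_b-1}$ for the $\h$-rows. The first batch of identities merely shifts each $\th$-index up by one; the second pulls a scalar $2t_b$ out of row $b$ for $b=1,\dots,q-1$, contributing the factor $2^{q-1}\prod_{b=1}^{q-1}t_b$, and replaces $\h_{t_b}$ by $\h_{t_b-1}$.

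Putting these together, after factoring out the scalars and restoring the $e^{-px^2}$ prefactor we are left with exactly $e^{-px^2}\Wr[e^{x^2}\th_{s_1+1},\dots]$ — but here one must be careful: the $\th$-indices have all been raised by one, which is \emph{not} literally $H_{M'}$ as written. The resolution is that raising every $\th$-index in $\Wr[e^{x^2}\th_{s_1},\dots,e^{x^2}\th_{s_p},\cdots]$ by one is precisely the operation $D_x(e^{x^2}\,\cdot\,)$ applied once more uniformly, i.e. it corresponds to $e^{x^2}\cdot D_x$ acting on the whole bracket, which for a Wronskian of $p$ functions of the form $e^{x^2}f_i$ can be reabsorbed; more cleanly, one checks via \eqref{eq:pWdef2} that $H_{M'}$ — whose $M_-$ part gives rows $\th_{s_a},\th_{s_a+1},\dots$ of width $p+q-1$ — matches term by term with what we obtained, since lowering the matrix width by one exactly compensates the uniform index shift. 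I expect the main obstacle to be precisely this bookkeeping: confirming that the ``uniform shift of $\th$-indices plus drop of one Wronskian order'' reproduces the $(p+q-1)\times(p+q-1)$ representation of $H_{M'}$ with no stray factor, and pinning down the sign $(-1)^p$ unambiguously. Once that identification is made — most transparently by doing the whole computation in the representation \eqref{eq:pWdef2} rather than \eqref{eq:pWdef1}, where the index shift is visible directly — the scalar factor $(-1)^p 2^{q-1}\prod_{b=1}^{q-1}t_b$ drops out and \eqref{eq:shift1} follows.
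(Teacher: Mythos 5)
Your overall strategy is exactly the paper's: use $\h_{t_q}=\h_0=1$, reduce the order via the identity $\Wr[1,f_1,\ldots,f_s]=\Wr[Df_1,\ldots,Df_s]$ (with the sign $(-1)^p$ coming from the position of the constant row after the $p$ conjugate rows), and then apply $D_x(e^{x^2}\th_n)=e^{x^2}\th_{n+1}$ and $D_x\h_n=2n\h_{n-1}$ to extract the factor $2^{q-1}\prod_{b=1}^{q-1}t_b$. However, there is a concrete error at the start that then forces you into an invalid patch at the end: you assert $(M-1)_-=M_-$. This is false. By the definition $M_-=\{-m-1\colon m<0,\ m\notin M\}$, a hole of $M$ at position $m_0<0$ is a hole of $M'=M-1$ at position $m_0-1$, contributing $-(m_0-1)-1=(-m_0-1)+1$ to $M'_-$. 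Hence $M'_-=\{s_1+1,\ldots,s_p+1\}$ (the paper states this explicitly at the top of its proof), while $M'_+=\{t_1-1,\ldots,t_{q-1}-1\}$ as you correctly say.

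Because of this, the expression you arrive at, $e^{-px^2}\Wr[e^{x^2}\th_{s_1+1},\ldots,e^{x^2}\th_{s_p+1},\h_{t_{q-1}-1},\ldots,\h_{t_1-1}]$, is \emph{literally} $H_{M'}$ by Definition \ref{def:HpW}; there is nothing to reconcile. Your closing paragraph, which tries to argue that ``lowering the matrix width by one exactly compensates the uniform index shift,'' is not a valid argument: in the representation \eqref{eq:pWdef2} the starting indices of the $\th$-rows are determined solely by the elements of $M'_-$, not by the width of the matrix, so no such compensation occurs or is needed. Replace the claim $(M-1)_-=M_-$ with $(M-1)_-=M_-+1$ (elementwise), delete the final reconciliation paragraph, and your computation coincides with the paper's proof.
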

\begin{proof}
  By assumption, $t_q=0$ and
  \[ M'_- = \{s_1+1,\ldots, s_p+1\},\quad M'_+ = \{ t_1-1,\ldots,
  t_{q-1}-1\}.\]
  The identity
  \begin{equation}
    \label{eq:Wr1}
    \Wr[1, f_1,\ldots, f_s] = \Wr[Df_1,\ldots, Df_s],
  \end{equation}
  together with \eqref{eq:hermids} implies that
  \begin{align*}
    \hM &=  e^{-p x^2} \Wr[ 
          e^{x^2} \th_{s_1},\ldots, e^{x^2}
          \th_{s_p},1,H_{t_{q-1}},\ldots , \h_{t_q}],\\
        &=(-1)^p e^{-px^2} \Wr[  D(e^{x^2}
          \th_{s_1}), \ldots,  D(e^{x^2} \th_{s_p}), D
          \h_{t_{q-1}},\ldots, D \h_{t_1}]\\
        &=(-1)^{p} 2^{q} \left(\prod_{b=1}^{q-1} t_b\right) e^{-p x^2}
          \Wr[  e^{x^2}
          \th_{s_1+1}, \ldots,  e^{x^2} \th_{s_p+1}, 
          \h_{t_{q-1}-1},\ldots,  \h_{t_1-1}]
  \end{align*}
\end{proof}

\begin{lem}
  \label{lem:shift2}
  Suppose that $M'=M+1$ and that $-1\notin M$.  Then,
  \begin{equation}
    \label{eq:shift2}
    H_M =  (-1)^{p+q-1}2^{p-1}\left( \prod_{a=1}^{p-1} s_a\right) \hMp
  \end{equation}
\end{lem}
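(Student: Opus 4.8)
The plan is to follow the proof of Lemma~\ref{lem:shift1} almost verbatim, after one preparatory move. Since $-1\notin M$, the integer $m=-1$ contributes $0$ to $M_-$, so the smallest element $s_p$ of $M_-$ is $0$ and hence $\th_{s_p}=\th_0=1$. Unlike the situation $t_q=0$, $H_{t_q}=1$ in Lemma~\ref{lem:shift1}, this constant does not sit directly inside the Wronskian \eqref{eq:pWdef1}: there it appears as $e^{x^2}\th_0=e^{x^2}$. So the first step is to strip the exponential off every entry at once by the homogeneity identity \eqref{eq:Wrhomog} with $g=e^{x^2}$. Writing $H_{t_b}=e^{x^2}(e^{-x^2}H_{t_b})$ one gets
\[
  \hM = e^{qx^2}\,\Wr[\th_{s_1},\dots,\th_{s_p},\,e^{-x^2}H_{t_q},\dots,e^{-x^2}H_{t_1}],
\]
and now the constant function $\th_{s_p}=1$ occurs literally as an entry. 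Applying the same identity to $M'$ records the analogous formula for $\hMp$, which is used at the very end.

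The second step is the mirror image of Lemma~\ref{lem:shift1}. Move the entry $\th_{s_p}=1$ from slot $p$ to the front, at the cost of a sign $(-1)^{p-1}$, and apply \eqref{eq:Wr1}, which differentiates every remaining entry. By the Hermite identities in \eqref{eq:hermids}, namely $D_x\th_n=2n\,\th_{n-1}$ and $D_x(e^{-x^2}H_n)=-e^{-x^2}H_{n+1}$, each $\th$-row has its index lowered by one (producing the constant factor $\prod_{a=1}^{p-1}2s_a$) while each $H$-row has its index raised by one (producing $q$ minus signs, i.e.\ a factor $(-1)^q$). Collecting all constants and reinserting $e^{qx^2}$, one obtains
\[
  \hM = (-1)^{p+q-1}\,2^{p-1}\left(\prod_{a=1}^{p-1}s_a\right) e^{qx^2}\,\Wr[\th_{s_1-1},\dots,\th_{s_{p-1}-1},\,e^{-x^2}H_{t_q+1},\dots,e^{-x^2}H_{t_1+1}].
\]

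The third step is pure bookkeeping: one must recognize the bracketed Wronskian (times $e^{qx^2}$) as exactly $\hMp$. From $-1\notin M$ and the definitions of $M_\pm$ one reads off $M'_+=\{t_1+1,\dots,t_q+1\}$ (so $M'$ still has $q$ filled boxes to the right of the origin) and $M'_-=\{s_1-1,\dots,s_{p-1}-1\}$ (so $M'$ has one fewer hole on the left); comparing with the $e^{qx^2}\Wr[\cdots]$ representation of $\hMp$ produced in the first step then gives \eqref{eq:shift2}. The only genuinely non-routine point is the first step: realizing that \eqref{eq:Wrhomog} has to be used to turn $e^{x^2}\th_0$ into the true constant $1$ before the derivative trick \eqref{eq:Wr1} is available; after that the argument is the exact dual of Lemma~\ref{lem:shift1}, with the degree-\emph{raising} Hermite identities replacing the degree-\emph{lowering} ones. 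Some care is also needed to track the two separate sources of sign — the transposition that brings $1$ to the front, and the $q$ minus signs from $D_x(e^{-x^2}H_n)$ — which combine to give the $(-1)^{p+q-1}$ in \eqref{eq:shift2}.
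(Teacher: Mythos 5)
Your proposal is correct and follows essentially the same route as the paper's own proof: factor $e^{x^2}$ out of every entry via \eqref{eq:Wrhomog} so that $e^{x^2}\th_{s_p}$ becomes the literal constant $1$, apply \eqref{eq:Wr1} at the cost of the transposition sign $(-1)^{p-1}$, then use $D_x\th_n=2n\th_{n-1}$ and $D_x(e^{-x^2}H_n)=-e^{-x^2}H_{n+1}$ to collect $(-1)^{q}2^{p-1}\prod_{a=1}^{p-1}s_a$ and reassemble the result as $\hMp$. The bookkeeping of $M'_\pm$ and of the two sources of sign matches the paper exactly.
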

\begin{proof}
  By assumption, $s_p = 0$ and
  \[ M'_-=\{s_1-1,\ldots, s_{p-1}-1\},\quad M'_+=\{t_1+1,\dots,
  t_q+1\}.\] The identity \eqref{eq:Wrhomog}
  together with   \eqref{eq:Wr1} implies
  \begin{align*}
    \hM &=  e^{-p x^2} \Wr[e^{x^2}
          \th_{s_1},\ldots,e^{x^2} \th_{s_{p-1}},
          e^{x^2},h_{t_q},\ldots  \h_{t_1}]\\ 
        &=  e^{qx^2}\Wr[\th_{s_1},\ldots,\th_{s_{p-1}},
          1,e^{-x^2} h_{t_q},\ldots , e^{-x^2}\h_{t_1}]\\
        &= (-1)^{p-1} e^{q x^2} \Wr[D\th_{s_1},\ldots,D\th_{s_{p-1}},
          De^{-x^2} h_{t_q},\ldots , De^{-x^2}\h_{t_1}]\\ 
        &= (-1)^{p+q-1} 2^{p-1} \left(\prod_{a=1}^{p-1} s_a\right) e^{q x^2}
          \Wr[\th_{s_1-1},\ldots,\th_{s_{p-1}-1},
          e^{-x^2} h_{t_q+1},\ldots , e^{-x^2}\h_{t_1+1}]\\ 
        &= (-1)^{p+q-1} 2^{p-1} \left(\prod_{a=1}^{p-1} s_a\right)
          e^{-(p-1) x^2}
          \Wr[e^{x^2} \th_{s_1-1},\ldots,e^{x^2}\th_{s_{p-1}-1},
          H_{t_q+1},\ldots , H_{t_1+1}]
  \end{align*}
\end{proof}

Proposition \ref{prop:stdform} immediately gives the following.
\begin{cor}
  \label{cor:uniquewronsk}
  Every Hermite pseudo-Wronskian is a scalar multiple of a Hermite
  Wronskian
  \[ \Wr[H_{m_\ell}, \ldots, H_{m_1}] \]
  for some unique choice of positive integers $m_1>\cdots > m_\ell>0$.
\end{cor}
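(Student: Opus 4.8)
The plan is to combine Corollary~\ref{cor:uniquewronsk}'s two ingredients: Proposition~\ref{prop:stdform}, which gives a \emph{unique} translate in standard form, and Theorem~\ref{thm:detequiv}, which says all pseudo-Wronskians in a translation class are nonzero scalar multiples of each other. So fix a Maya diagram $M$ and let $M_0 = M-k$ be its standard-form representative, where $k=\min\,\Z\setminus M$ by Proposition~\ref{prop:stdform}. First I would observe that, for a diagram in standard form, $(M_0)_- = \emptyset$: by definition of standard form, $m\in M_0$ for every $m<0$, so there are no holes to the left of the origin. Hence in Definition~\ref{def:HpW} the index $p$ equals $0$, there are no conjugate-Hermite rows, and the prefactor $e^{-px^2}$ is trivial. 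Writing $(M_0)_+ = \{m_1 > \cdots > m_\ell\}$ (all strictly positive, since $0\notin M_0$), the defining formula \eqref{eq:pWdef1} collapses to
\[
  H_{M_0} = \Wr[H_{m_\ell}, \ldots, H_{m_1}],
\]
which is exactly a Hermite Wronskian of the stated shape.

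Next I would invoke Theorem~\ref{thm:detequiv} to relate $H_M$ to $H_{M_0}$. If $k>0$ the theorem applies directly with $M' = M-k = M_0$, giving $\bigl(\prod_{i\in G_k}\gamma_i\bigr) H_{M_0} = \bigl(\prod_{i\in E_k}\epsilon_i\bigr) H_M$; if $k<0$ one applies the theorem in the form $M = M_0 - (-k)$ with $-k>0$ instead, and if $k=0$ there is nothing to prove. In every case one gets $H_M = c\, H_{M_0}$ for a constant $c$. One must check $c\neq 0$: the $\gamma_i$ and $\epsilon_i$ are products of factors of the form $\pm(2m-2i)$ with $m\neq i$ (one ranges over $m\in M$ with $m>i$, the other over $m\notin M$ with $m<i$), hence all nonzero, so both $\prod_{i\in E_k}\epsilon_i$ and $\prod_{i\in G_k}\gamma_i$ are nonzero and $c$ is a well-defined nonzero scalar. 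Combining, $H_M = c\,\Wr[H_{m_\ell},\ldots,H_{m_1}]$.

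Finally I would address uniqueness of the multiset $\{m_1>\cdots>m_\ell>0\}$. This is where a little care is needed, since a priori two different strictly-decreasing tuples of positive integers could produce proportional Wronskians. The clean way is to route uniqueness through the correspondence already established in the paper: a tuple $m_1>\cdots>m_\ell>0$ is precisely the list $M_+$ of a standard-form Maya diagram (with $M_- = \emptyset$, equivalently $p=0$), and by the bijection between standard diagrams and unlabelled Maya diagrams, this standard diagram is the \emph{unique} standard representative of the translation class of $M$. Since $M$ determines its translation class, it determines $M_0$, hence determines the tuple. So the only genuine content to verify is that the standard-form diagram $M_0$ of a given $M$ is itself uniquely determined, which is exactly Proposition~\ref{prop:stdform}; no independent linear-algebra argument about non-proportionality of distinct Hermite Wronskians is required.

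The main obstacle, such as it is, is purely bookkeeping: making sure the sign of $k$ is handled (so that Theorem~\ref{thm:detequiv}, stated for $M'=M-k$ with $k>0$, is applied in the correct direction), and being explicit that "$p=0$ in standard form" is what makes \eqref{eq:pWdef1} degenerate to an honest Wronskian with no exponential prefactor. There is no hard analytic or combinatorial step here — the corollary is genuinely an immediate consequence of the preceding proposition and theorem, and the proof should be only a few lines.
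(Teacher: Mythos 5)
Your proposal is correct and follows essentially the same route as the paper, whose proof is simply the observation that Proposition~\ref{prop:stdform} (unique standard-form translate, for which $M_-=\emptyset$ and hence $p=0$ in \eqref{eq:pWdef1}) combined with Theorem~\ref{thm:detequiv} gives the result immediately. Your expansion of the details---the degeneration of the pseudo-Wronskian to an honest Wronskian in standard form, the nonvanishing of the proportionality constants, and routing uniqueness through the bijection with standard diagrams rather than through non-proportionality of distinct Wronskians---is exactly the intended argument.
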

\begin{example}
  Consider the three equivalent Maya diagrams in Figure \ref{fig:equivM}.  We have
  \begin{eqnarray*}
   M'=M+3 :\quad &M'_-= \{2\}\quad &M'_+=\{2,4,5\}\\
   M''=M+6:\quad &M''_-=\emptyset \quad &M''_+=\{1,2,5,7,8\}
   \end{eqnarray*}
 Hence,
  \[ 
  H_{M'}=  \begin{vmatrix}
    \th_{2} & \th_{3} & \th_{4} &\th_{5}\\
    \h_{2} & \h_{2}' & \h_{2}'' & \h_{2}'''\\
    \h_{4} & \h_{4}' & \h_{4}'' & \h_{4}'''\\
    \h_{5} & \h_{5}' & \h_{5}'' & \h_{5}'''
  \end{vmatrix}, 
  \]
   \[H_{M''}=\Wr[H_1,H_2,H_5,H_7,H_8]\]
   By \eqref{eq:detequiv}, 
  \[ -483840 H_{M''}=-1935360 H_{M'}= H_{M} .\]
\end{example}

Before continuing, we mention that a ``pure'' pseudo-Wronskian
corresponding to a Maya diagram without any positive elements can also
be expressed as a Wronskian determinant of conjugate Hermite
polynomials.
\begin{prop}
  Let $M\subset \Z$ be a Maya diagram consisting entirely of negative
  integers, that is $M_+=\emptyset$.  Let $\{ s_1, \ldots, s_p \}$ be
  the elements of $M_-$ arranged in descending order.   Then,
  \[ H_M = \Wr[\th_{s_1},\ldots, \th_{s_p}].\]  
\end{prop}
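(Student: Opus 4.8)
The plan is to start from the pseudo-Wronskian definition \eqref{eq:pWdef1}. When $M_+ = \emptyset$, so $q = 0$, the definition reads
\[
  H_M = e^{-px^2}\Wr[e^{x^2}\th_{s_1},\ldots, e^{x^2}\th_{s_p}].
\]
The whole content of the proposition is then to peel the common factor $e^{x^2}$ off each entry. The obvious tool is the Wronskian homogeneity identity \eqref{eq:Wrhomog}, $\Wr[gf_1,\ldots,gf_s] = g^s\Wr[f_1,\ldots,f_s]$, applied with $g = e^{x^2}$ and $s = p$: this gives $\Wr[e^{x^2}\th_{s_1},\ldots,e^{x^2}\th_{s_p}] = e^{px^2}\Wr[\th_{s_1},\ldots,\th_{s_p}]$, and the prefactor $e^{-px^2}$ cancels it exactly, leaving $H_M = \Wr[\th_{s_1},\ldots,\th_{s_p}]$.

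So the proof is essentially one line. The only thing worth spelling out is why the statement is consistent with the alternative representation \eqref{eq:pWdef2}: with $q=0$ that determinant has rows $(\th_{s_i},\th_{s_i+1},\ldots,\th_{s_i+p-1})$, which by the identity $D_x\th_n = 2n\th_{n-1}$ from \eqref{eq:hermids} differs from the genuine Wronskian $\Wr[\th_{s_1},\ldots,\th_{s_p}]$ only by the nonzero column-scaling $\prod$ of factors $2n$ — but \eqref{eq:pWdef1} is the definition we use, and there the cancellation is exact with no leftover constant. I would therefore phrase the proof as: "This is immediate from Definition \ref{def:HpW} with $q=0$ and the identity \eqref{eq:Wrhomog} applied with $g = e^{x^2}$."

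There is no real obstacle here; the statement is a degenerate special case of the construction, recorded for later use (it is the $p$-fold analogue of the observation that an ordinary Hermite Wronskian is the $q=0$ case). If one wanted to be scrupulous one could note that $\Wr[\th_{s_1},\ldots,\th_{s_p}]$ is, up to the sign $(-1)^{p(p-1)/2}$ coming from reversing the order of rows, the Wronskian of $\th_{s_p},\ldots,\th_{s_1}$ in increasing order; but since the proposition only asserts an equality of the two displayed expressions and both use the decreasing order $s_1 > \cdots > s_p$, no sign bookkeeping is needed.

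\begin{proof}
  Since $M_+ = \emptyset$ we have $q = 0$, so Definition \ref{def:HpW} gives
  \[
    H_M = e^{-px^2}\,\Wr[e^{x^2}\th_{s_1},\ldots, e^{x^2}\th_{s_p}].
  \]
  Applying the Wronskian homogeneity identity \eqref{eq:Wrhomog} with $g = e^{x^2}$ and $s = p$,
  \[
    \Wr[e^{x^2}\th_{s_1},\ldots, e^{x^2}\th_{s_p}] = e^{px^2}\,\Wr[\th_{s_1},\ldots,\th_{s_p}],
  \]
  and the factor $e^{-px^2}$ cancels $e^{px^2}$, yielding $H_M = \Wr[\th_{s_1},\ldots,\th_{s_p}]$.
\end{proof}
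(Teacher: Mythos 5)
Your proof is correct and is essentially identical to the paper's own argument: both start from Definition \ref{def:HpW} with $q=0$ and apply the homogeneity identity \eqref{eq:Wrhomog} with $g=e^{x^2}$ to cancel the prefactor $e^{-px^2}$. The extra remarks about \eqref{eq:pWdef2} and row ordering are harmless but unnecessary.
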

\begin{proof}
  Recall that
  \[ H_M = e^{-px^2} \Wr[e^{x^2} \th_{s_1},\ldots, e^{x^2} \th_{s_p}]\]
  The desired identity now follows using \eqref{eq:Wrhomog}.
\end{proof}
\noindent
As a special case of Theorem \ref{thm:detequiv} we obtain the
following identity; see \cite{Oblomkov1999} \cite{Felder2012} and the
references therein.
\begin{cor}
  Let $\lambda$ be a partition of length $\ell$, $\lambda'$ the
  conjugate partitions of length $\ell'=\lambda_1$. Let
  $m_1>\ldots> m_\ell$ the positive integers defined by
  \eqref{eq:Mfromlambda} and $m'_1> \ldots > m'_{\ell'}$ the analogous
  integers for $\lambda'$. Then,
  \[ 2^{\ell'} V(m'_1,\ldots, m'_{\ell'}) \Wr[H_{m_1},\ldots,
  H_{m_\ell}] = 2^\ell V(m_1,\ldots, m_\ell) \Wr[
  \th_{m'_{1}},\ldots, \th_{m'_{\ell'}}],\] where
  \[ V(a_1,\ldots, a_k) = \prod_{1\leq i<j<k} (a_j-a_i) \]
  is the usual Vandermonde determinant.
\end{cor}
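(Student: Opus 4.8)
The plan is to apply Theorem \ref{thm:detequiv} to the special pair of equivalent Maya diagrams that realizes a partition $\lambda$ and its conjugate $\lambda'$, and then to identify the $\epsilon_i$ and $\gamma_i$ products with Vandermonde determinants. First I would observe that by Proposition \ref{prop:stdform} there is a standard-form representative $M$ of the unlabelled Maya diagram associated to $\lambda$; by \eqref{eq:Mfromlambda} this $M$ has $M_+ = \{m_1 > \cdots > m_\ell\}$ and $M_- = \emptyset$, so $H_M = \Wr[H_{m_1},\ldots,H_{m_\ell}]$. On the other side, the key structural fact is that the unlabelled Maya diagram of $\lambda'$ is obtained from that of $\lambda$ by reflecting the sequence of filled/empty boxes and swapping the two box types; concretely, if $M$ is the standard-form diagram for $\lambda$, then $\widetilde M := -1 - (\Z\setminus M)$ is a Maya diagram whose associated partition is $\lambda'$, and one checks from the definitions that $\widetilde M_+ = \emptyset$ and $\widetilde M_- = \{ m'_1 > \cdots > m'_{\ell'}\}$, so $H_{\widetilde M} = \Wr[\widetilde H_{m'_1},\ldots,\widetilde H_{m'_{\ell'}}]$. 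The diagrams $M$ and $\widetilde M$ are equivalent: $\widetilde M = M - k$ for the shift $k = m_1 + 1 = \ell + \lambda_1$ (equivalently $k$ is chosen so that the box at position $0$ of $M$, the largest gap, maps past all of $M$).

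Next I would feed $M' = \widetilde M = M - k$ into \eqref{eq:detequiv}. With this choice every box in positions $0 \le m < k$ is either the largest-$\ell$ filled positions of $M_+$ or the $\ell'$ empty positions that become $\widetilde M_-$, so $E_k$ is (a shift of) $\{m_1,\ldots,m_\ell\}$ and $G_k$ is $\{m'_1,\ldots,m'_{\ell'}\}$ — this is exactly the point where one must do the bookkeeping carefully, matching the abstract index sets of Theorem \ref{thm:detequiv} against the explicit Frobenius/partition data. Once that identification is in place, the claim reduces to evaluating the two products $\prod_{i\in E_k}\epsilon_i$ and $\prod_{i\in G_k}\gamma_i$. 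For $i\in E_k$, $\epsilon_i = (-1)^{\#\{m\notin M: m<i\}}\prod_{m\in M, m>i}(2m-2i)$; the sign is constant in our situation (all holes of $M$ lie below all of $E_k$, so $\#\{m\notin M: m<i\}$ equals the number of holes, independent of $i$), and the product over $i\in E_k$ of $\prod_{m>i, m\in M}(2m-2i)$ telescopes into $2^{(\text{something})}$ times $\prod_{i<j}(2(m_j - m_i))$, i.e. a power of $2$ times a Vandermonde in $m_1,\ldots,m_\ell$. A symmetric computation handles $\prod_{i\in G_k}\gamma_i$, producing a power of $2$ times a Vandermonde in $m'_1,\ldots,m'_{\ell'}$.

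Finally I would collect powers of $2$ and reconcile signs. A short count shows the total power of $2$ coming out of the $\epsilon$-side is $2^\ell$ (up to the common factor that cancels) and from the $\gamma$-side is $2^{\ell'}$, and the leftover signs on the two sides agree — here one uses the classical identities $|\lambda| = |\lambda'|$ and $\#\{(i,j): i<j\} $ relations, or simply the fact that the equivalence \eqref{eq:detequiv} already guarantees the two sides match, so any residual sign discrepancy must cancel. Putting the pieces together yields
\[
2^{\ell'} V(m'_1,\ldots,m'_{\ell'})\,\Wr[H_{m_1},\ldots,H_{m_\ell}]
= 2^{\ell} V(m_1,\ldots,m_\ell)\,\Wr[\widetilde H_{m'_1},\ldots,\widetilde H_{m'_{\ell'}}],
\]
which is the assertion. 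The main obstacle I anticipate is purely combinatorial: getting the shift $k$, the sets $E_k$, $G_k$, and the exponents of $2$ exactly right, since an off-by-one in $k$ or a miscount of holes will corrupt both the Vandermonde indices and the power of $2$; the analytic content is entirely contained in Theorem \ref{thm:detequiv} and the preceding Proposition, which I would simply cite.
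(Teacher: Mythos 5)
Your overall strategy is the right one --- specialize Theorem \ref{thm:detequiv} to the pair consisting of the standard diagram $M$ of $\lambda$ and its translate $M-(m_1+1)$ --- and this is indeed all the paper itself offers (it states the corollary ``as a special case of Theorem \ref{thm:detequiv}'' with no written proof). However, two of your concrete steps fail. First, the diagram $\widetilde M=-1-(\Z\setminus M)$ is \emph{not} equivalent to $M$: reflecting and complementing produces a representative of the unlabelled Maya diagram of the \emph{conjugate} partition $\lambda'$, which is a different equivalence class unless $\lambda$ is self-conjugate, and a direct check from the definitions gives $\widetilde M_-=M_+=\{m_1,\ldots,m_\ell\}$, not $\{m'_1,\ldots,m'_{\ell'}\}$. (Test $\lambda=(2)$: $M=\Z_{<0}\cup\{2\}$, so $\widetilde M=\Z_{\leq-4}\cup\{-2,-1\}$ has $\widetilde M_-=\{2\}$, a single element, whereas the diagram you actually need, $M-3=\Z_{\leq-4}\cup\{-1\}$, has $(M-3)_-=\{2,1\}=\{m'_1,m'_2\}$.) The fact your argument hinges on --- that $\bigl(M-(m_1+1)\bigr)_-=\{m'_1,\ldots,m'_{\ell'}\}$, equivalently that the $j$-th hole $h_j$ of $M$ in $[0,m_1)$ satisfies $m_1-h_j=\lambda'_j+\lambda_1-j=m'_j$ --- is true, but it must be proved directly from \eqref{eq:conjlambdadef} and \eqref{eq:Mfromlambda}; your $\widetilde M$ does not deliver it, and the assertion ``$\widetilde M=M-k$'' is false. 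Note also that $G_k$ is the set of holes $\{h_1,\ldots,h_{\ell'}\}=\{m_1-m'_j\}$, not $\{m'_j\}$ itself, which matters when you extract the Vandermonde.

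Second, your sign analysis is wrong: in standard form the holes of $M$ are interleaved with the elements of $M_+$ (for $\lambda=(2,1)$ one has $M_+=\{3,1\}$ and holes at $0,2$), so $\#\{m\notin M\colon m<m_j\}=\lambda_j$ depends on $j$; the total sign on the $\epsilon$-side is $(-1)^{|\lambda|}$, and it matches the $(-1)^{|\lambda'|}$ arising on the $\gamma$-side only because $|\lambda|=|\lambda'|$ --- this is precisely the cancellation you must exhibit. Appealing to ``the equivalence already guarantees the two sides match'' is circular here: Theorem \ref{thm:detequiv} fixes the ratio $\prod\gamma_i/\prod\epsilon_i$, while the corollary asserts that this ratio equals the specific quantity $2^{\ell'}V(m')/\bigl(2^{\ell}V(m)\bigr)$, up to the reordering sign $(-1)^{\binom{\ell}{2}}$ relating $\Wr[H_{t_q},\ldots,H_{t_1}]$ in Definition \ref{def:HpW} to $\Wr[H_{m_1},\ldots,H_{m_\ell}]$ as written in the corollary. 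Finally, the telescoping does not immediately give $2^{\ell}$ and $2^{\ell'}$: the raw products yield $2^{\binom{\ell}{2}}$ and $2^{\binom{\ell'}{2}}$ times Vandermondes, and reconciling these exponents with the stated ones is part of the bookkeeping you have deferred. None of this is conceptually deep, but as written the proof does not go through.
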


\section{The minimal order problem}\label{sec:min}
It is quite remarkable to have an infinite family of identities among
determinants of different order.  In this section we pose and solve
the following question: given an unlabelled Maya diagram, which of the
corresponding equivalent pseudo-Wronskian determinants has the
smallest order?  This question will have some applications in the
following sections to derive simpler, alternative representations of
certain class of special functions.  The precise formulation of the
question requires the following.
\begin{definition}
  We define $|M|$, the \emph{girth} of a Maya diagram $M$, to be the
  length of its Frobenius symbol, that is the sum of the cardinalities
  of $M_-$ and $M_+$.  Since the girth of $M$ is just the order of the
  corresponding pseudo-Wronskian determinant $\hM$, our aim is to
  determine
  \[ \min \{ |M-k| \colon k \in \Z\},\]
  a quantity that we will call the \emph{minimal girth} of $M$.  A
  $k\in \Z$ such that $|M-k|$ is minimal will be called a minimal girth origin for $M$. 
\end{definition}
\noindent
As we now show, it suffices to check for minimal girth at a finite
number of $k$-values.

\begin{prop}
  \label{prop:everymooissacred}
  If $k\in \Z$ is a minimal girth origin for a Maya diagram $M$, then necessarily $k-1\in M$ and $k\notin M$.
\end{prop}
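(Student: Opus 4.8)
The plan is to reduce everything to the behaviour of the girth $|M-n|$ as a function of the origin $n$, which is completely controlled by the bent diagram of $M$. Recall from the proposition relating Frobenius symbols and bent diagrams that if $B=\{(i_n,j_n)\}_{n\in\Z}$ is the bent diagram of $M$, then $i_n=\#(M-n)_-$ and $j_n=\#(M-n)_+$, so that $|M-n|=i_n+j_n$. Write $g(n)=i_n+j_n$ for this girth function; a minimal girth origin is by definition a point where $g$ attains its minimum.

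First I would record how $g$ changes under a unit shift of the origin, which is immediate from the defining transition of a bent diagram and its derivation: if $n\in M$ then $(i_{n+1},j_{n+1})=(i_n,j_n-1)$, hence $g(n+1)=g(n)-1$; if $n\notin M$ then $(i_{n+1},j_{n+1})=(i_n+1,j_n)$, hence $g(n+1)=g(n)+1$. Informally, advancing the origin past a filled box lowers the girth by one, and advancing it past an empty box raises it by one; in particular $g$ never stays constant.

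Now suppose $k$ is a minimal girth origin, so $g(k)\leq g(n)$ for all $n$, and in particular $g(k)\leq g(k+1)$ and $g(k)\leq g(k-1)$. Applying the shift rule at $n=k$: if $k\in M$ then $g(k+1)=g(k)-1<g(k)$, contradicting minimality, so $k\notin M$. Applying the shift rule at $n=k-1$: if $k-1\notin M$ then $g(k)=g(k-1)+1>g(k-1)$, again contradicting minimality, so $k-1\in M$. This is exactly the assertion.

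There is no real obstacle here once the bent-diagram language is in place: the whole content is the monotonicity statement ``girth drops across a filled box and rises across an empty box,'' and the only care needed is in quoting the transition rule of the bent diagram correctly. One could instead argue directly with the Frobenius data, comparing $(M-k)$ with $(M-k)-1$ when $0\in M-k$ and with $(M-k)+1$ when $-1\notin M-k$, and checking that in each case one obtains a Maya diagram of strictly smaller girth; but routing through the bent diagram makes the bookkeeping essentially automatic.
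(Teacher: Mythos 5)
Your proof is correct and is essentially the paper's own argument: both rest on the observation that shifting the origin by one unit changes the girth by exactly $-1$ when the box crossed is filled and $+1$ when it is empty, which immediately forces a minimizer to sit at a filled-then-empty transition. The paper implements this by writing out how the Frobenius symbol of $M-k$ transforms under a unit shift, while you read the same $\pm 1$ increments off the bent-diagram transition rule $(i_{n+1},j_{n+1})-(i_n,j_n)\in\{(0,-1),(1,0)\}$ together with $|M-n|=i_n+j_n$; the content is identical.
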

\noindent
More plainly, the minimal girth origins of a Maya diagram must occur at locations
where a full box is succeeded by an empty box.
\begin{proof}
  Let $(s_1,\ldots,s_p\mid t_1,\ldots, t_q)$ be the Frobenius symbol
  of $M-k$.  If $k\in M$, then $t_q=0$.  Observe that the Frobenius
  symbol of $M'=M-k-1$ is
  $(s_1+1,\ldots , s_p+1\mid t_1-1 , \cdots , t_{q-1}-1)$.  If
  $k-1\notin M$, then $s_p=0$. In this case, observe that the
  Frobenius symbol of $M'=M-k+1$ is
  $(s_1-1,\ldots, s_{p-1}-1\mid t_1+1,\ldots, t_q+1)$. In both cases,
  $|M'| = |M-k|-1$.  Therefore $k\notin M$ and $k-1\in M$.
\end{proof}


The solution of the minimal order problem is closely related to the
geometry of Ferrer's diagrams.
\begin{definition}
  Let $F\subset \N\times \N$ be a Ferrer's diagram corresponding to a
  partition $\lambda$.  We call $(i,j)\in F$ an \emph{inside corner}
  if $(i,j+1), (i+1,j)\in F$ but $(i+1,j+1)\notin F$.
\end{definition}

\begin{prop}
  Let $B=\{(i_n,j_n)\}_{n\in \Z}$ be the bent diagram corresponding to
  a Maya diagram $M\subset \Z$.  Then $|M-n| = i_n+j_n.$
\end{prop}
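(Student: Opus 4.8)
The plan is to reduce the statement to two facts already established in the excerpt: the definition of the girth $|\cdot|$ as the total length of a Frobenius symbol (equivalently, the sum of the cardinalities of $M_-$ and $M_+$), and the earlier proposition asserting that $i_n$ and $j_n$ are the cardinalities of $(M-n)_-$ and $(M-n)_+$, respectively. Note first that $M-n$ is again a Maya diagram, so $(M-n)_-$ and $(M-n)_+$ are well-defined finite subsets of $\Nz$ and the girth $|M-n|$ makes sense. Once the two facts above are invoked, the claimed identity is immediate: $|M-n| = \#(M-n)_- + \#(M-n)_+ = i_n + j_n$.

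For a self-contained argument I would instead deduce the equalities $\#(M-n)_- = i_n$ and $\#(M-n)_+ = j_n$ directly from \eqref{eq:injndef}. Writing $(M-n)_- = \{ -\ell - 1 \colon \ell<0,\ \ell\notin M-n \}$ and substituting $m = \ell + n$ puts $(M-n)_-$ in bijection with $\{ m\in\Z \colon m<n,\ m\notin M\}$, whose cardinality is by definition $i_n$. Likewise, writing $(M-n)_+ = \{ m'\in M-n \colon m'\geq 0\}$ and substituting $m = m' + n$ identifies $(M-n)_+$ with $\{ m\in M \colon m\geq n\}$, of cardinality $j_n$. Adding the two gives $i_n + j_n = \#(M-n)_- + \#(M-n)_+ = |M-n|$.

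There is essentially no obstacle here; the content is purely bookkeeping, repackaging the dictionary between shifted Maya diagrams and the coordinates of the bent diagram. The only point that warrants a line of care is checking that the index shifts $m = \ell + n$ and $m = m' + n$ translate the inequalities $\ell<0$ and $m'\geq 0$ into $m<n$ and $m\geq n$ correctly, which is routine.
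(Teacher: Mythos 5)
Your proposal is correct and follows the same route as the paper, which likewise just combines the definition of girth with the earlier proposition that $i_n$ and $j_n$ are the cardinalities of $(M-n)_-$ and $(M-n)_+$. The extra self-contained verification via the index shifts is accurate but only re-derives that already-stated proposition.
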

\begin{proof}
  Recall that $i_n$ is the cardinality of $(M-n)_-$ while $j_n$ is the
  cardinality of $(M-n)_+$.
\end{proof}

\begin{prop}
  \label{prop:minMGO}
  Let $n\in Z$ be a minimal girth origin for a Maya diagram $M\subset \Z$ corresponding
  to a partition $\lambda =(\lambda_1,\ldots, \lambda_\ell)$.    Then, one of
  the following three possibilities holds:
  \begin{itemize}
  \item[(a)] $(i_n,j_n)$ is an
    inside corner of the corresponding Ferrer's diagram,
  \item[(b)] $(i_n,j_n)=(0,\ell)$ 
  \item[(c)] $(i_n,j_n) = (\lambda_1,0)$.
  \end{itemize}
\end{prop}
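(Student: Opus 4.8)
The plan is to combine Proposition~\ref{prop:everymooissacred} with the geometry of the bent diagram $B$ established in Proposition~\ref{prop:bentmaya}. By Proposition~\ref{prop:everymooissacred}, a minimal girth origin $n$ satisfies $n-1\in M$ and $n\notin M$. In terms of the bent diagram this says the displacement $(i_{n-1},j_{n-1})\mapsto (i_n,j_n)$ is downward, $(0,-1)$, and the displacement $(i_n,j_n)\mapsto (i_{n+1},j_{n+1})$ is rightward, $(1,0)$: that is, the point $(i_n,j_n)$ sits at a ``concave'' turn of the bent path, with the path arriving from above and leaving to the right. So $(i_n+1,j_n)\in B$ — wait, more precisely $(i_{n+1},j_{n+1})=(i_n+1,j_n)\in B$ and $(i_{n-1},j_{n-1})=(i_n,j_n+1)\in B$.

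First I would dispose of the degenerate cases. If $j_n=0$, then since $(i_n,j_n+1)=(i_n,1)\in B$ lies in $\N\times\N$, Proposition~\ref{prop:bentmaya} puts $(i_n,1)\in F'\subset F$; combined with $j_n=0$ this forces $i_n$ to be the maximal first coordinate appearing, i.e. $i_n=\lambda_1$, giving case (c). (One checks directly from \eqref{eq:lambdafromM} that at $n=\max M$ one has $(i_n,j_n)=(\lambda_1,0)$, and the minimal-girth condition pins $n$ to this value when $j_n=0$.) Symmetrically, if $i_n=0$, the point $(i_n+1,j_n)=(1,j_n)\in B\cap(\N\times\N)=F'$ forces $j_n=\ell$, the number of rows of $F$, giving case (b). This is essentially the bent-diagram analogue of the observation that $i_n=0$ for $n$ small and $j_n=0$ for $n$ large, with the concavity condition selecting the single transitional index in each extreme.

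In the remaining case $i_n>0$ and $j_n>0$, Proposition~\ref{prop:bentmaya} gives $(i_n,j_n)\in F'\subset F$, and moreover $(i_n+1,j_n)=(i_{n+1},j_{n+1})\in F'\subset F$ and $(i_n,j_n+1)=(i_{n-1},j_{n-1})\in F'\subset F$; by the definition of the rim $F'$, membership of $(i_n,j_n)$ in $F'$ means exactly $(i_n+1,j_n+1)\notin F$. Thus $(i_n,j_n)\in F$, $(i_n+1,j_n),(i_n,j_n+1)\in F$, $(i_n+1,j_n+1)\notin F$ — which is precisely the definition of an inside corner, giving case (a). The only subtlety worth spelling out is that $(i_n\pm 1, j_n)$ and $(i_n, j_n\pm 1)$ having nonzero coordinates is what lets us invoke Proposition~\ref{prop:bentmaya} to land them in $F'$ rather than merely in $B$; this is automatic once $i_n,j_n>0$.

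The main obstacle, such as it is, is bookkeeping: making sure the two ``boundary'' cases (b) and (c) are correctly identified with $(0,\ell)$ and $(\lambda_1,0)$ rather than with some other extreme point of the path, and checking that the concavity condition from Proposition~\ref{prop:everymooissacred} is genuinely what forces $n$ to be the unique such transitional index when one coordinate vanishes. Everything else is a direct translation between the Maya-diagram statement ``full box followed by empty box'' and the bent-diagram statement ``down-step followed by right-step'', together with the rim characterization of Proposition~\ref{prop:bentmaya}.
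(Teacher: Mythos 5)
Your argument is correct and follows the paper's own proof in all essentials: Proposition \ref{prop:everymooissacred} pins a minimal girth origin to a down-step followed by a right-step of the bent diagram, the rim characterization of Proposition \ref{prop:bentmaya} handles the interior case $i_n,j_n>0$, and the two extremes of the path give cases (b) and (c). The only blemish is the parenthetical claim that $(i_n,j_n)=(\lambda_1,0)$ at $n=\max M$ --- that holds at $n=\max M+1$, since at $n=\max M$ one has $(i_n,j_n)=(\lambda_1,1)$ --- but the surrounding reasoning makes the intended index clear, and you are in fact slightly more careful than the paper in verifying that $(i_n+1,j_n)$ and $(i_n,j_n+1)$ lie in $F$, as the definition of an inside corner requires.
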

\begin{proof}
  Without loss of generality, assume that $M$ is in standard form.  By
  Proposition \ref{prop:everymooissacred}, $n-1\in M$ and $n\notin M$.
  Hence, by \eqref{eq:injndef}, $(i_{n-1},j_{n-1}) = (i_n,j_n-1)$ and
  $(i_{n+1},j_{n+1}) = (i_n+1,j_n)$.  If $0<n<\max M$, then
  $i_n,j_n>0$ and hence by Proposition \ref{prop:bentmaya},
  $(i_n,j_n)$ is an inside corner of $F$. The only other possibilities
  are $n=0$ and $n=m_1+1$, where $m_1=\max M$.  In the former case,
  $(i_0,j_0)=(0,\ell)$. In the second case,
  $(i_{m_1+1},j_{m_1+1}) = (\lambda_1,0)$.
\end{proof}

\begin{cor}
  \label{cor:MGO}
  Let $M\subset\Z$ be a Maya diagram and $\lambda$ the corresponding
  partition.  Then, the minimal girth is given by
  $\min \{ \lambda_{j+1} + j \colon 0\leq j\leq \ell \}$.
\end{cor}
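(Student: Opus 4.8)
The plan is to combine Proposition~\ref{prop:minMGO} with the dictionary between bent diagrams and partitions. By the preceding propositions, the minimal girth equals $\min\{i_n+j_n \colon n\in\Z\}$, and Proposition~\ref{prop:minMGO} tells us that the minimum is attained at an $n$ for which $(i_n,j_n)$ is either an inside corner of the Ferrer's diagram $F$, or the "top" endpoint $(0,\ell)$, or the "bottom" endpoint $(\lambda_1,0)$. So the first step is to enumerate the candidate values $i_n+j_n$ over exactly these special points and show that the quantity $\min\{\lambda_{j+1}+j \colon 0\le j\le \ell\}$ picks out precisely the same set (or at least a set with the same minimum).

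First I would set $M$ to be in standard form, which is harmless since girth is shift-invariant, and recall from the proof of Proposition~\ref{prop:minMGO} that the relevant $n$ satisfy $n-1\in M$, $n\notin M$, with $(i_{n-1},j_{n-1})=(i_n,j_n-1)$ and $(i_{n+1},j_{n+1})=(i_n+1,j_n)$. Next I would make the key identification: if $(i_n,j_n)$ is one of the distinguished points of Proposition~\ref{prop:minMGO} with $j_n = j$, then $i_n = \lambda_{j+1}$. For an inside corner this is exactly the condition $(i+1,j)\in F$, $(i+1,j+1)\notin F$ read through \eqref{eq:Ffromlambda}: it forces $\lambda_{j+1}=i$ (we have $(i,j+1)\in F$ so $\lambda_{j+1}\ge i$, and $(i+1,j+1)\notin F$ so $\lambda_{j+1}\le i$). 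For case (b), $j_n=\ell$ and $i_n=0=\lambda_{\ell+1}$ since $\lambda$ has length $\ell$. For case (c), $j_n=0$ and $i_n=\lambda_1=\lambda_{0+1}$. Hence in all three cases $i_n+j_n = \lambda_{j+1}+j$ for $j=j_n\in\{0,1,\ldots,\ell\}$, so the minimal girth is at least $\min\{\lambda_{j+1}+j \colon 0\le j\le\ell\}$.

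For the reverse inequality I would argue that for every $j\in\{0,1,\ldots,\ell\}$ there is an index $n$ with $i_n+j_n \le \lambda_{j+1}+j$, so that $\min\{\lambda_{j+1}+j\}$ is actually achieved by some point of the bent diagram. Given $j$, take $n = m_{j+1}+1$ where $m_1>m_2>\cdots>m_\ell$ are the positive elements of the standard-form $M$ (with the convention $m_{\ell+1}=-1$, so $n=0$ when $j=\ell$). Then $n\notin M$ and $n-1\in M$, so by \eqref{eq:injndef} we get $j_n=j$ (exactly $j$ elements of $M$ are $\ge n$, namely $m_1,\ldots,m_j$ when $j\ge 1$, and none when $j=0$), while $i_n = \#\{m\notin M\colon m<n\}$. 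Using \eqref{eq:Mfromlambda}, $\lambda_{j+1}$ counts the holes below $m_{j+1}$, so $i_n \le \lambda_{j+1} + 1$ — actually I would verify the sharper bound $i_n = \lambda_{j+1}$ or $i_n = \lambda_{j+1}+(\text{count of holes in }[m_{j+1},n))$, and since $n=m_{j+1}+1$ the interval $[m_{j+1},n)=\{m_{j+1}\}$ contains the filled box $m_{j+1}$, giving exactly $i_n=\lambda_{j+1}$. Thus $i_n+j_n=\lambda_{j+1}+j$ for this $n$, completing the inequality in the other direction.

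The main obstacle I anticipate is bookkeeping the boundary cases $j=0$ and $j=\ell$ consistently with the conventions: when $j=\ell$ one needs $n=0$ and $i_0=0$ (true in standard form), and when $j=0$ one needs $n=m_1+1$ with $j_n=0$ and $i_n=\lambda_1$. A clean way to sidestep the case analysis entirely is to observe that $n\mapsto (i_n,j_n)$ traces out, as $n$ ranges over $\Z$, every lattice point on the staircase path $B$, and the three types of points in Proposition~\ref{prop:minMGO} are exactly the points of $B$ where the path turns from "down" to "right" (plus the two extreme turning-free endpoints); matching $j_n=j$ to $i_n=\lambda_{j+1}$ is then just the statement that the rim of $F$ meets column-height $j$ at horizontal coordinate $\lambda_{j+1}$, which is Proposition~\ref{prop:bentmaya} restated. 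Granting that, Corollary~\ref{cor:MGO} is immediate: $\min_n (i_n+j_n) = \min_{0\le j\le\ell}(\lambda_{j+1}+j)$, and we write $\lambda_{\ell+1}=0$ to make the $j=\ell$ term equal $\ell$.
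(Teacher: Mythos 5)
Your argument is correct and follows essentially the same route as the paper: identify the three candidate types of points from Proposition~\ref{prop:minMGO} with the points $(\lambda_{j+1},j)$, $0\le j\le \ell$, via \eqref{eq:Ffromlambda}, and conclude. You additionally make explicit the reverse inequality (that each value $\lambda_{j+1}+j$ is actually attained on the bent diagram), which the paper leaves implicit; the only slip is your intermediate claim that $n=m_{j+1}+1\notin M$, which can fail when $m_j=m_{j+1}+1$, but your actual computation of $(i_n,j_n)$ never uses it, so the proof stands.
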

\begin{proof}
  By \eqref{eq:Ffromlambda} the inside corners of the corresponding
  Ferrer's diagram occur at $(\lambda_{j+1},j)$ for
  $j=1,\ldots, \ell-1$.  If $j=0$, then
  $(\lambda_{j+1},j) = (\lambda_1,0)$.  If $j=\ell$, then
  $(\lambda_{j+1},j) = (0,\ell)$. The desired conclusion now follows
  by Proposition \ref{prop:minMGO}
\end{proof}

\begin{definition}
  Let $F$ be a Ferrer's diagram.  We define a Durfee rectangle
  \cite{andrews3} of $F$ to be a rectangle with vertices
  $(0,0), (i,0), (i,j), (0,j)$ where $(i,j)$ is an inside corner of
  $F$.  If $M$ is the corresponding Maya diagram with an origin
  located at an inside corner, then the corresponding Frobenius symbol
  $(s_1,\ldots, s_p \mid t_1, \ldots, t_q)$ satisfies $s_p,t_q>0$.  We
  are thus able to define the partitions
  \begin{align*}
    \mu &= (s_1-p+1, \ldots, s_{p-1}+1, s_p) \\
    \nu &= (t_q-q+1,\ldots, t_{q-1}+1, t_q)
  \end{align*}
  of length $p$ and $q$, respectively.  We call
  $[\mu\mid \nu]_{p\times q}$ the Durfee symbol of $M$.
\end{definition}

Visually, the partitions $\mu$ and $\nu$ describe the complement of
the Durfee rectangle in $F$ (see Figure \ref{fig:durfee}).  Partition
$\mu$ is the transpose of the remnant above the rectangle, and $\nu$
is the remnant to the right of the rectangle.  The girth of the
corresponding Maya diagram is simply the distance of the inside corner
to the origin relative to the taxi-cab metric.  To determine the
minimal girth, the corresponding girths have to be compared to the
height $\ell$ of the Ferrer's diagram, and the width $\lambda_1$,
which can be considered as Durfee rectangles of width zero and height
zero, respectively (black circle dots in Figure \ref{fig:durfee}).
\begin{figure}\label{fig:durfee}
\begin{center}
\includegraphics[width=0.9\textwidth]{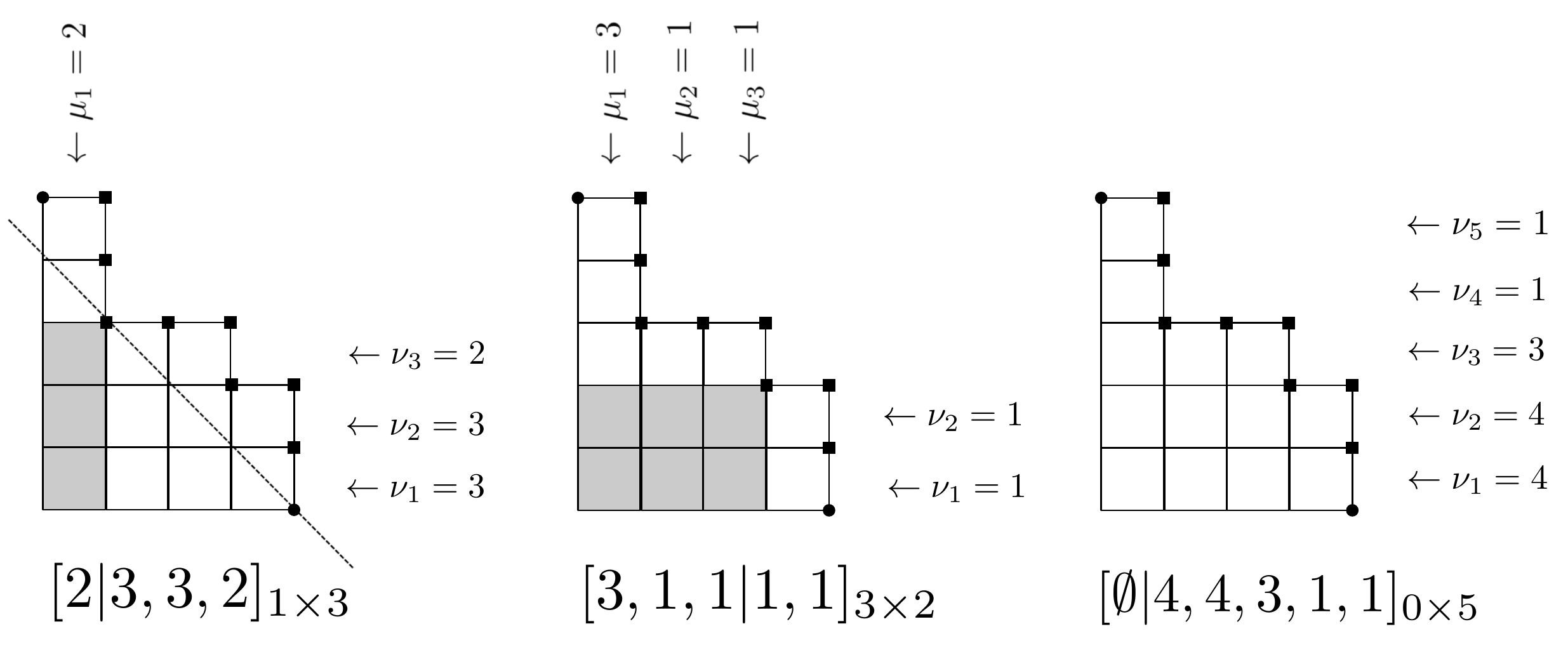}
\caption{Durfee rectangles and Durfee symbols for the three
  equivalent Maya diagrams depicted in Figure \ref{fig:equivM}. Note
  that the shortest girth corresponds to the left diagram, which is a
  non-unique solution to the minimal order problem for this
  partition. The right diagram where the Durfee symbol
  coincides with the partition, corresponds to the Maya diagram in
  standard form.}
\end{center}
\end{figure}


\section{Application to Exceptional Hermite polynomials}

One possible application of Theorem \ref{thm:detequiv} and the minimal
order problem is to provide a more efficient computation for
exceptional Hermite polynomials.  Exceptional Hermite polynomials are
complete families of orthogonal polynomials that arise as
eigenfunctions of a Sturm-Liouville problem on the real line,
\cite{Gomez-Ullate2009a,Gomez-Ullate2010c}. The degree sequence of
each family does not range over all positive integers, i.e. there is a
finite number of gaps or missing degrees.  Let
$\lambda=(\lambda_1,\dots,\lambda_\ell)$ be a partition and
$M\subset \Z$ the corresponding standard Maya diagram with
$M_+=\{m_1,\dots,m_\ell\}$ its positive elements as determined by
\eqref{eq:Mfromlambda}.  Following \cite{Gomez-Ullate2014,Gomez-Ullate2015b} we define
an infinite number of polynomials in the following manner:
\begin{equation}\label{eq:XHermitedef}
  H^{(\lambda)}_n=\Wr[H_{m_\ell},\dots,H_{m_1},H_{\ell-|\lambda|+n}],\quad
  n\notin M+|\lambda|-\ell
\end{equation}
By construction,
$H^{(\lambda)}_n$ is a polynomial with
\[\deg H^{(\lambda)}_n = \sum_{i=1}^\ell \left(m_i-i+1\right) +
\ell-|\lambda|+n - \ell = n\]
The degree sequence for the exceptional Hermite family indexed by
partition $\lambda$ is $\Z\setminus (M+|\lambda|-\ell)$. Thus, degrees
$0,1,\ldots ,|\lambda| -\ell-1$ and the degrees
$m_1+|\lambda|-\ell,\ldots, m_\ell + |\lambda|-\ell$ are missing, so
that the polynomial sequence $\{H^{(\lambda)}_n\}_n$ is missing a
total of $|\lambda|$ degrees, the codimension of the sequence.

Exceptional Hermite polynomials are a generalization the classical Hermite
family because they satisfy a Hermite-like differential equation
\begin{equation}
  \label{eq:XDE}
  T_\lambda \left[H^{(\lambda)}_n\right] = 2(N-n) H^{(\lambda)}_n,\qquad n\notin M+|\lambda|-\ell,
\end{equation}
where
\begin{equation}
  T_\lambda [y]:=y''-2\left(x+\frac{H_M'}{H_M} \right)
  y'+ \left(\frac{H_M''}{H_M}+ 2 x
    \frac{H_M'}{H_M} \right) y\label{eqn:hatT}.
\end{equation}

We say that $\lambda$ is an even partition if $\ell$ is even and
$\lambda_{2i-1} = \lambda_{2i}$ for every $i$.  If $\lambda$ is even,
then the exceptional Hermite polynomials $H^{(\lambda)}_n$ satisfy the
orthogonality relations
\[ \int_\Rset H^{(\lambda)}_n H^{(\lambda)}_m W_\lambda(x) dx =
\delta_{m,n} \sqrt{\pi}\, 2^{j+\ell} j!
\prod_{i=1}^{\ell}(j-m_i),\qquad n,m\notin M+|\lambda|-\ell
\]
where $j = n+\ell-N$, and the orthogonality weight is defined as
\[ W_\lambda(x) = \frac{\relax{e^{-x^2}}}{{H_M(x)^2}}.\]
Moreover, if $\lambda$ is an even partition then
$\text{span}\{H^{(\lambda)}_n:n\notin M+|\lambda|-\ell\}$ is dense in
$\mathrm{L}^ 2(\Rset,W_\lambda)$.

Exceptional polynomials appear in a number of applications in
mathematical physics, mostly as solutions to exactly solvable quantum
mechanical problems describing bound states
\cite{Gomez-Ullate2014,Odake2009a,Quesne2009}. They appear also in
connection with super-integrable systems
\cite{Post2012,Marquette2013a}, exact solutions to Dirac's equation
\cite{Schulze-Halberg2014}, diffusion equations and random processes
\cite{Ho2011b}, finite-gap potentials \cite{Hemery2010} and point
vortex models \cite{Kudryashov}.

From a mathematical point of view, the main results are concerned with the full classification of exceptional polynomials \cite{Gomez-Ullate2012a,Garcia-Ferrero2016}, properties of their zeros \cite{Gomez-Ullate2013,Horvath2015,Kuijlaars2015}, and recurrence relations \cite{Miki2015,durr,Odake2015,Gomez-Ullate2015b}.

Here we are concerned with the most economical presentation of a given
exceptional Hermite polynomial family. The definition
\eqref{eq:XHermitedef} involves the computation of a Wronskian
determinant of order $\ell+1$.  In light of the preceding results, this
order can be potentially reduced by replacing the Wronskian with an
appropriate pseudo-Wronskian.

Let $M\subset \Z$ be a Maya diagram, $m_1>m_2> \cdots$ the elements of
$M$ arranged in descending order, and $\lambda$ the corresponding
partition. Set
\[ O_r = \{ m_j +1 \colon \lambda_{j+1} + j = r,\; 0\leq j \leq \ell
\}\qquad r\geq 0.\]
Let $k_r = \max O_r$, or $-\infty$ if the latter is the empty set.
Visually, the elements of $O_r$ are the labels of the inside corners
that lie on the anti-diagonal $i+j=r$, with $k_r$ the largest such.  By
Corollary \ref{cor:MGO}, if $r$ is the minimal girth, then $k_r$ is
the largest minimal girth origin.
\begin{prop}\label{prop:shortest}
  Let $M\subset \Z$ be a Maya diagram, $m\notin M$ and
  $M'=M\cup \{ m \}$.  Let $r$ be the minimal girth of $M$. Let $r'$
  denote the minimal girth of $M'$ and $O'_{r'}$ the minimal girth origins of $M'$.
  Then one of the following mutually exclusive possibilities holds.
  \begin{enumerate}
  \item[(a)] If $m< k_r$ then $r' = r-1$ and
    $O'_{r'} = \{ k\in O_r \colon k>m \}$.
  \item[(b)] If $m=k_r$ then $r'=r$ and
    $O'_{r'} = \{ m+ 1\} \cup \{ k\in O_{r+1} \colon k>m \}$.
  \item[(c)] If $k_r<m<k_{r+1}$, then $r' =r$ and
    $O'_{r'} = \{ k\in O_{r+1} \colon k>m \}$.
  \item[(d)] If $m\geq \max \{k_r,k_{r+1}\}$ then $r'=r+1$ and
    $O'_{r'} = O_r\cup \{ k\in O_{r+2} \colon k>m \}$.
  \end{enumerate}
\end{prop}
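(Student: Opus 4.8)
The plan is to pass everything through the bent-diagram picture, where adding an element to a Maya diagram has a completely transparent geometric effect, and then read off the four cases from the description of minimal girth origins as inside corners (together with the two boundary cases $(0,\ell)$ and $(\lambda_1,0)$) supplied by Proposition~\ref{prop:minMGO} and Corollary~\ref{cor:MGO}. Concretely, write $\lambda$ for the partition of $M$ and $\lambda^\ast$ for the partition of $M'=M\cup\{m\}$. Since $m\notin M$, adding $m$ means the box at position $m$ flips from empty to full; in the bent-diagram language this replaces a $(1,0)$ step by a $(0,-1)$ step at index $m$, which, comparing Ferrer's diagrams, adds exactly one cell to $\lambda$. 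If $m$ is the $(j+1)$-st empty position to the left of (i.e. strictly less than) the largest element of $M$ with at least $j$ elements of $M$ above it --- equivalently if in the notation $O_r=\{m_j+1 \colon \lambda_{j+1}+j=r\}$ the position $m$ sits ``between rows'' in a precise sense --- then the new box is appended to row $j+1$, so $\lambda^\ast_{j+1}=\lambda_{j+1}+1$ and $\lambda^\ast_i=\lambda_i$ for $i\neq j+1$. I would make this bookkeeping precise by locating $m$ relative to the labels $k_r=\max O_r$: the condition $k_s<m<k_{s+1}$ (resp. $m=k_s$, resp. $m<k_r$, resp. $m\ge\max\{k_r,k_{r+1}\}$) pins down exactly which row of $\lambda$ receives the extra cell.

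The second step is to track what this single added cell does to the quantity $g(j):=\lambda_{j+1}+j$ for $0\le j\le\ell$, whose minimum over $j$ is the minimal girth by Corollary~\ref{cor:MGO}. Adding one cell to row $j_0+1$ increases $g(j)$ by $1$ for $j=j_0$ only (when $j_0<\ell$; if the cell starts a new row $\ell+1$ one must also account for $\ell\mapsto\ell+1$, handled by the boundary case $(0,\ell)$), and leaves $g(j)$ unchanged for all other $j$. Therefore the new minimal girth $r'$ is either $r$ or $r+1$, and it equals $r$ precisely when the old minimum of $g$ was attained at some $j\neq j_0$. Translating ``the minimum of $g$ is attained off the modified row'' back into the geometry of inside corners and the anti-diagonal $i+j=r$ is what produces the trichotomy of outcomes $r'\in\{r-1,r,r+1\}$ in cases (a)--(d); the $r'=r-1$ phenomenon in case (a) occurs because when $m<k_r$ the added cell can destroy an inside corner on the anti-diagonal $i+j=r$ without creating a worse bottleneck, effectively shrinking the diagram's ``waist''. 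Here I expect to use Proposition~\ref{prop:everymooissacred} to restrict attention to candidate origins $n$ with $n-1\in M'$, $n\notin M'$, which changes only at indices near $m$.

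The third and most delicate step is the exact description of the set $O'_{r'}$ of minimal girth origins of $M'$. The inside corners of the Ferrer's diagram of $\lambda^\ast$ are, by \eqref{eq:Ffromlambda} and Corollary~\ref{cor:MGO}, at $(\lambda^\ast_{j+1},j)$; since $\lambda^\ast$ differs from $\lambda$ only in row $j_0+1$, these corners coincide with those of $\lambda$ except possibly at $j=j_0$ and $j=j_0\pm1$, and one must check which of them now lie on the anti-diagonal $i+j=r'$. The labels themselves shift in a controlled way: the label of the inside corner at height $j$ is $m_j+1$ for $M$, and after adding $m$ the elements $m_j$ of $M$ in positions $\ge m$ are re-indexed. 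I would handle this by splitting into ``corners to the right of $m$'' (whose labels and heights are unaffected, so they survive in $O'_{r'}$ iff they lay on the correct anti-diagonal before) and ``corners to the left of $m$'' (which can be shifted or merged), and then match term by term against the four stated formulas --- e.g. in case (b), $m=k_r$ itself becomes the label $m+1$ of a new inside corner at girth $r$, while the surviving old corners are precisely those in $O_{r+1}$ to the right of $m$. The main obstacle will be verifying the boundary cases $j=0$ and $j=\ell$ (the ``Durfee rectangles of zero width/height'' from the figure), where the inside-corner description degenerates to $(\lambda_1,0)$ or $(0,\ell)$; these require a slightly separate argument via Proposition~\ref{prop:minMGO}, but no new idea, only care with the indexing when $m$ exceeds $\max M$ or when $m$ is so large that it creates a brand-new row.
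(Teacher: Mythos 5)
Your overall instinct---pass to the bent-diagram picture and read girths off Corollary~\ref{cor:MGO}---points in the right direction, but the first step of your plan rests on a false claim that the rest of the argument inherits. Adding an element $m\notin M$ to a Maya diagram does \emph{not} add exactly one cell to the associated partition: it modifies the partition by an entire border strip, and the partition can even shrink. For example, let $M=(\Z_{-}\setminus\{-3\})\cup\{5\}$, whose partition is $(6,1,1)$; adjoining $m=-3$ gives $M'=\Z_{-}\cup\{5\}$, whose partition is $(5)$, so eight cells become five. This is fatal to your second step: from ``one cell is added to row $j_0+1$'' you deduce that $g(j)=\lambda_{j+1}+j$ increases at exactly one index and is unchanged elsewhere, hence that $r'\in\{r,r+1\}$---which contradicts case~(a) of the very statement being proved (in the example above, $k_r=k_2=0$, $m=-3<k_2$, and the minimal girth drops from $r=2$ to $r'=1$). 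You sense the tension when you try to explain the ``$r'=r-1$ phenomenon,'' but no description of destroyed inside corners can be reconciled with a premise under which every $g(j)$ weakly increases; the framework simply cannot produce case~(a).

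The repair is to drop the partition bookkeeping and compare the two girth functions directly, which is what the paper does. Writing $B=\{(i_k,j_k)\}$ and $B'=\{(i'_k,j'_k)\}$ for the bent diagrams of $M$ and $M'$, definition \eqref{eq:injndef} gives at once $(i'_k,j'_k)=(i_k,j_k+1)$ for $k\le m$ and $(i'_k,j'_k)=(i_k-1,j_k)$ for $k>m$; hence $|M'-k|=|M-k|+1$ at and to the left of $m$, and $|M'-k|=|M-k|-1$ strictly to the right of $m$. All four cases, including the exact identification of $O'_{r'}$, then follow from an elementary case analysis on whether the minimum of $|M-k|$ over $k>m$ equals $r$, $r+1$, or $r+2$, i.e.\ on the position of $m$ relative to $k_r$ and $k_{r+1}$. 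Your third step (matching surviving corners against the stated sets, with care at the boundary corners $(0,\ell)$ and $(\lambda_1,0)$) is essentially sound, but only after this correct comparison replaces the ``one added cell'' premise.
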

\begin{proof}
  Let $B=\{ (i_k,j_k) \}_{k\in\Z}$ and $B'=\{ (i'_k,j'_k)\}_{k\in \Z}$
  be the bent diagrams for $M$ and $M'$, respectively. The proof is
  based on the following key observation:
  \[ (i'_k,j'_k) =
  \begin{cases}
    (i_k, j_k+1) & \text{ if } k\leq m \\
    (i_k-1,j_k) & \text{ if } k> m.
  \end{cases}\]
  We argue each of the above cases in turn.  Suppose that (a) holds.
  If $k\in O_r$ and $k>m$, then $i'_k+j'_k=r-1$. For all other $k$, we
  have $i'_k+j_k \geq r$.

  Suppose that (b) holds. Hence $(i_{m+1},j_{m+1}) = (i_m+1,j_m)$ and
  hence $i'_{m+1}+j'_{m+1} = i_m+j_m=r$.  If $k\leq m$, then
  $i'_k+j'_k \geq r+1$.   If $k>m$, then 
  $i'_k + j'_k = i_k  + j_k-1$ is equal to $r$ if $k\in O_{r+1}$ and
  is $>r$, otherwise.

  Suppose that (c) holds.  If $k\leq m$, then $i'_k+j'_k \geq r+1$.
  If $k>m$ then $i'_k+j'_k = r$ if $k\in O_{r+1}$ and is $>r$
  otherwise.

  Suppose that (d) holds.  If $k\leq m$ then $i'_k+j'_k= r+1$ if $k\in
  O_r$ and is $>r$ otherwise.  If $k>m$ then $i'_k+j'_k =r+1$ if $k\in
  O_{r+2}$ and is $>r$ otherwise.

\end{proof}


Let $M\subset \Z$ be a Maya diagram and $\lambda$ the corresponding
partition.  For $n\notin M+|\lambda|-\ell$ set
$M^{(\lambda)}_n = M\cup \{ n+\ell-|\lambda|\}$.  Observe that
$H^{(\lambda)}_n = (-1)^\ell H_{M^{(\lambda)}_n}$.  We therefore seek
the origin of minimal girth for $M^{(\lambda)}_n$.  Let
$r^{(\lambda)}_n = \min \{ |M^{(\lambda)}_n-k|\colon k\in \Z\}$ be the
minimal order for $H^{(\lambda)}_n$. We call any such $k$ that realizes this
minimum an \emph{origin of minimal order}.

\begin{cor}
  \label{cor:shortest}
  Let $r=r_M$ and $n\notin M+|\lambda|-\ell$. If $k_{r+1}>k_r$, then
  \begin{equation}
    \label{eq:shortest1}
    r^{(\lambda)}_n =
    \begin{cases}
      r-1 & \text{ if } n<k_r+|\lambda|-\ell ,\\
      r & \text{ if } k_r+|\lambda|-\ell \leq n < k_{r+1}+|\lambda|-\ell \\
      r+1 & \text{ if } n> k_{r+1}+|\lambda|-\ell
    \end{cases}
  \end{equation}
  An origin of minimal order for each of the above cases is,
  respectively, $k_r, k_{r+1}, k_r$.

  If $k_{r+1}<k_r$, then
  \begin{equation}
    \label{eq:shortest2}
    r^{(\lambda)}_n =
    \begin{cases}
      r-1 & \text{ if } n<k_r+|\lambda|-\ell ,\\
      r & \text{ if }  n =k_r+|\lambda|-\ell \\
      r+1 & \text{ if } n> k_r+|\lambda|-\ell
    \end{cases}
  \end{equation}
  An origin of minimal order for each of the above case is,
  respectively, $k_r, k_r+1, k_r$.
\end{cor}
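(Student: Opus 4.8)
The plan is to apply Proposition~\ref{prop:shortest} with $m=n+\ell-|\lambda|$, translating its four cases into the three cases of each displayed formula. Recall that $M^{(\lambda)}_n = M\cup\{m\}$ where $m=n+\ell-|\lambda|$, and that the minimal girth of $M$ is $r=r_M$. Since the hypotheses of Corollary~\ref{cor:shortest} involve the comparison between the two largest inside-corner labels $k_r=\max O_r$ and $k_{r+1}=\max O_{r+1}$ on the anti-diagonals $i+j=r$ and $i+j=r+1$, the two regimes $k_{r+1}>k_r$ and $k_{r+1}<k_r$ correspond to whether the ``boundary'' cases (b) and (c) of Proposition~\ref{prop:shortest} can occur or collapse. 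Throughout, I translate the condition $m<k_r$ into $n<k_r+|\lambda|-\ell$, the condition $m=k_r$ into $n=k_r+|\lambda|-\ell$, and so on, which is where the additive shift $|\lambda|-\ell$ in the statement comes from.

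\smallskip\noindent\textbf{Case $k_{r+1}>k_r$.} First I would observe that since $r$ is the minimal girth of $M$ and $k_r$ is the largest minimal girth origin, the set $O_r$ of inside-corner labels on the anti-diagonal $i+j=r$ is nonempty, and $k_{r+1}$ is likewise well-defined (possibly equal to some finite value $>k_r$). Now apply Proposition~\ref{prop:shortest}: if $m<k_r$ we are in case (a), giving $r'=r-1$; if $m=k_r$ we are in case (b), giving $r'=r$; if $k_r<m<k_{r+1}$ we are in case (c), giving $r'=r$; and if $m\geq\max\{k_r,k_{r+1}\}=k_{r+1}$ we are in case (d), giving $r'=r+1$. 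Since $n\notin M+|\lambda|-\ell$ precisely means $m\notin M$, this is exactly the regime in which Proposition~\ref{prop:shortest} applies. Collecting cases (b) and (c) together gives the middle line $k_r+|\lambda|-\ell\leq n<k_{r+1}+|\lambda|-\ell$ with $r^{(\lambda)}_n=r$, and cases (a), (d) give the other two lines. For the minimal-order origins: in case (a) the origins are $\{k\in O_r\colon k>m\}$, whose largest element is $k_r$ (note $k_r>m$ here); in case (b) they include $m+1=k_r+1$ but we may also pick $k_r$ itself? --- here I need to be slightly careful and instead read off the largest minimal-girth origin from case (b), namely $m+1=k_r+1$; but since case (b) sits at $n=k_r+|\lambda|-\ell$ and the statement claims $k_{r+1}$ as the origin, I would check that $k_{r+1}$ is also a valid origin in this sub-case by noting $k_{r+1}\in O'_{r'}$ when $k_{r+1}>m=k_r$. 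In case (d), $O'_{r'}=O_r\cup\{k\in O_{r+2}\colon k>m\}$, and since $m\geq k_{r+1}>k_r\geq\max O_r$ actually $O_r$ consists of labels $\le k_r$, so $k_r$ is a valid (in fact the relevant) origin.

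\smallskip\noindent\textbf{Case $k_{r+1}<k_r$.} The key simplification here is that the sub-case $k_r<m<k_{r+1}$ of Proposition~\ref{prop:shortest}(c) is vacuous, since $k_{r+1}<k_r$ leaves no integers strictly between them in the required order; similarly $\max\{k_r,k_{r+1}\}=k_r$. So only cases (a), (b), (d) of Proposition~\ref{prop:shortest} can occur: $m<k_r$ gives (a) with $r'=r-1$; $m=k_r$ gives (b) with $r'=r$; and $m>k_r$ gives (d) with $r'=r+1$ (using $m\geq\max\{k_r,k_{r+1}\}=k_r$, with equality already handled by (b)). Translating via $m=n+\ell-|\lambda|$ yields the three lines of \eqref{eq:shortest2}. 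The minimal-order origins follow as before: $k_r$ in case (a) (largest element of $\{k\in O_r\colon k>m\}$), $m+1=k_r+1$ in case (b), and $k_r$ in case (d) (as $k_r=\max O_r$ and $O_r\subset O'_{r'}$).

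\smallskip\noindent\textbf{Main obstacle.} The routine part is the bookkeeping that turns the $m$-inequalities of Proposition~\ref{prop:shortest} into $n$-inequalities; the genuinely delicate point is verifying the claimed choices of origin of minimal order in the ``boundary'' sub-cases --- particularly confirming that in case (b) of the first regime the value $k_{r+1}$ (rather than merely $k_r+1$) is a legitimate minimal-order origin, which requires unpacking that $O'_{r'}$ in Proposition~\ref{prop:shortest}(b) contains $\{k\in O_{r+1}\colon k>m\}$ and that $k_{r+1}>k_r=m$ places $k_{r+1}$ in this set. Once that membership is confirmed, the rest is a direct transcription.
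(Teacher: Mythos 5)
Your proposal is correct and takes essentially the same route as the paper, which states this as an immediate consequence of Proposition~\ref{prop:shortest} with no written proof: you substitute $m=n+\ell-|\lambda|$, merge cases (b) and (c) in the regime $k_{r+1}>k_r$, note that (c) is vacuous when $k_{r+1}<k_r$, and read off the claimed origins from the sets $O'_{r'}$. Your verification that $k_{r+1}\in O'_{r'}$ in sub-case (b) of the first regime (because $k_{r+1}\in O_{r+1}$ and $k_{r+1}>k_r=m$) correctly settles the one point you flagged as delicate.
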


\begin{example}
  Consider the case of $\lambda=(2,2,1,1)$ The corresponding Frobenius
  symbol is $(\emptyset | 1,2,4,5)$.  The minimal girth is $r=2$
  with $k_2=6$ the unique minimal girth origin.  The Frobenius symbol of $M-6$ is
  $(5,2\mid \emptyset)$.  Hence, pseudo-Wronskian of smallest order is
  \[ H_{M-6} = \begin{vmatrix}
    \th_{5} & \th_{6} \\
    \th_{2} & \th_{3}
  \end{vmatrix}\]
  with 
  \[ H_M=\Wr[H_1,H_2,H_4,H_5] = -2^5\times 24  H_{M-6}. \]

  The exceptional Hermite polynomials for this partition are given by
  \begin{equation} \label{ex:H} H^{(\lambda)}_n=
    \Wr[H_1,H_2,H_4,H_5,H_{n-2}],\qquad n\in \{2,5,8,9,10,\dots\}.
  \end{equation}
  We can replace the above $5\times 5$ determinant with a
  pseudo-Wronskian of smaller order.  Here $k_3 = 3<6$ and hence we
  must apply \eqref{eq:shortest2}. An origin of minimal order is $7$
  if $n=7$, and $6$ otherwise.  Explicitly,
  \begin{align*}
    H^{(\lambda)}_2 &= 2^{12}\times 120 \; \th_2 \\
    H^{(\lambda)}_5 &= 2^{12} \times 72\;\th_5 \\
    H^{(\lambda)}_8 &=     K_8 \begin{vmatrix}
      \th_6 & \th_7\\
      \th_3 & \th_4
    \end{vmatrix} \\
    H^{(\lambda)}_n &= 
    K_n
    \begin{vmatrix}
      \th_{5} & \th_{6} & \th_{7}\\
      \th_{2} & \th_{3} & \th_{4}\\
      \h_{n-8} & \h_{n-8}' & \h_{n-8}''
    \end{vmatrix},\qquad  n>8,
  \end{align*}
  where
  \[ K_n = -2^9 \times 24\times (n-3)(n-4)(n-6)(n-7) \]

\end{example}

\begin{example}
  Consider the case of $\lambda=(4,4,1,1)$ The corresponding Frobenius
  symbol is $(\emptyset \mid 7,6,2,1)$.  The minimal girth is $r=3$
  with $k_3=3$ the unique minimal girth origin.  The Frobenius symbol of $M-3$ is
  $(2\mid 4,3)$.  Hence, pseudo-Wronskian of smallest order is
  \[ H_{M-3} = \begin{vmatrix}
    \th_{2} & \th_{3}  & \th_4 \\
    H_{3} & H'_{3} & H''_3\\
    H_{4} & H'_{4} & H''_4
  \end{vmatrix}\]
  with 
  \[ H_M=\Wr[H_1,H_2,H_6,H_7] = 2^5\times 600\,  H_{M-3}. \]

  The exceptional Hermite polynomials for this partition are given by
  \begin{equation} \label{ex:H} H^{(\lambda)}_n=
    \Wr[H_1,H_2,H_6,H_7,H_{n-6}],\qquad n\in \{6,9,10,11,14,15,\dots\}.
  \end{equation}
  Here $k_4 = 8>3$ and hence we must apply \eqref{eq:shortest1}.  An
  origin of minimal order is $8$ if $9\leq n\leq 14$ and $3$
  otherwise.  Explicitly,
  \begin{align*}
    H^{(\lambda)}_6 &= 2^{14}\times 9\times 7\times 25 \; \Wr[H_3,H_4] \\
    H^{(\lambda)}_9 &= 2^{11} \times9\times 5 
                      \begin{vmatrix}
                        \th_2 & \th_3 & \th_4\\
                        \th_{3} & \th_{4} & \th_{5}\\
                        \th_7 & \th_8 & \th_9
                      \end{vmatrix},\\
    H^{(\lambda)}_{10} &= 2^{11} \times9\times 5 
                      \begin{vmatrix}
                        \th_2 & \th_3 & \th_4\\
                        \th_4 & \th_5 & \th_6\\
                        \th_7 & \th_8 & \th_9
                      \end{vmatrix},\\
    H^{(\lambda)}_{11} &= -2^{11} \times 3\times 25
                      \begin{vmatrix}
                        \th_3 & \th_4 & \th_5\\
                        \th_4 & \th_5 & \th_6\\
                        \th_7 & \th_8 & \th_9
                      \end{vmatrix},\\
    H^{(\lambda)}_n &= 
    K_n
    \begin{vmatrix}
      \th_{2} & \th_{3} & \th_{4} & \th_5\\
      \h_2 & \h_2' & \h''_2 & \h'''_2\\
      \h_4 & \h_4' & \h''_4 & \h'''_4\\
      \h_{n-9} & \h_{n-9}' & \h_{n-9}'' & \h'''_{n-9}
    \end{vmatrix},\qquad  n\geq 14
  \end{align*}
  where
  \[ K_n = -2^{10}  \times 45\times (n-7)(n-8)\]

\end{example}

\section{Application to rational solutions of Painleve IV}
In this section we apply our minimal order results to the description
of rational solutions of \PIV, the fourth Painlev\'e equation:
\[ y'' = \frac{(y')^2}{2 y} + \frac32 y^3 + 4t y^2+ 2(t^2-a)y +
\frac{b}{y},\quad y = y(t).\]
In order to connect \PIV\ to pseudo-Wronskians we need to recall the
notion of a factorization chain, and to describe the class of rational
solvable extensions of the harmonic oscillator.

A factorization chain is a sequence of Schr\"odinger operators
\[ L_i = -D_x^2 + U_i ,\quad D_x= \frac{d}{dx},\; U_i=U_i(x)\]
that is related by Darboux transformations,
\begin{equation}
  \label{eq:Dxform}
  \begin{aligned}
    L_i &= (D_x + f_i)(-D_x + f_i)+\lambda_i, \quad f_i = f_i(x),\\
    L_{i+1} &= (-D_x + f_i)(D_x + f_i)+\lambda_i.
  \end{aligned}
\end{equation}
It follows that $f_i$ is the solution of the Riccati equations
\[ f_i' + f_i^2  = U_i - \lambda_i,\quad -f_i' +f_i^2 = U_{i+1}- \lambda_i.\]
Equivalently, 
\begin{equation}
  \label{eq:Lipsii}
  L_i\psi_i = \lambda_i\psi_i,\qquad\text{where } f_i = \frac{\psi_i'}{\psi_i}.
\end{equation}
It also follows that the potentials of the chain are related by
\[ U_{i+n} =U_i - 2 \left( f'_i+ \cdots + f'_{i+n-1}\right).\]
Eliminating the potentials, we obtain a chain of coupled equations
\[ (f_i + f_{i+1})' + f_{i+1}^2 - f_i^2 = \alpha_i,\quad \alpha_i =
\lambda_{i} - \lambda_{i+1}.\]
We speak of an $n$-step cyclic factorization chain if $U_{n+1} = U_1+\Delta$
for some $n\in \N$ and constant 
\[ \Delta= -(\alpha_1 + \cdots + \alpha_n).\]

It is well known \cite{Adlerchains,veselov93} that  \PIV\ is
equivalent to the 3-step cyclic factorization chain
\begin{equation}
  \label{eq:3chain}
  \begin{aligned}
    (f_1 + f_2)' + f_2^2 - f_1 ^2 &= \alpha_1,\quad f_i= f_i(z),\; i=1,2,3\\
    (f_2 + f_3)' + f_3^2 - f_2 ^2 &=\alpha_2 \\
    (f_3 + f_1)' + f_1^2 - f_3 ^2 &=\alpha_3
  \end{aligned}
\end{equation}
The reduction of \eqref{eq:3chain} to \PIV\ is via the following
relations
\begin{gather}
  \label{eq:f123W}
  f_1 = -W - \frac{\Delta}{2} z,\quad f_{2,3} = \frac{W}{2} \pm \frac{
    W'-\alpha_2}{2W},\quad
  \Delta = -(\alpha_1+\alpha_2+\alpha_3),\; W=W(z),\\ \nonumber
  W(z) = 2^{-\frac12} \Delta^{\frac12}\, y(t),\; z=2^{\frac12}
  \Delta^{-\frac12}\, t,\qquad a = \Delta^{-1}(\alpha_3-\alpha_1),\; b
  = -2\Delta^{-2}\alpha_2^2.
\end{gather}

Next, we recall some relevant definitions from
\cite{Gomez-Ullate2014}.  A rational extension of the harmonic
oscillator is a potential of the form
\[ U(x) = x^2 + \frac{a(x)}{b(x)},\]
where $a(x), b(x)$ are polynomials with $\deg a< \deg b$.   We say
that the corresponding Schr\"odinger operator
$L = -D^2 + U$ is exactly solvable by polynomials if there exists
functions $\mu(x),\zeta(x)$ such that for all but finitely many $k\in \Nz$
there exists a degree $n$ polynomial $y_n(z)$ such that
\[ \psi_n  = \mu(x) y_n(\zeta(x)) \] is a (formal) eigenfunction of $L$ --- that  is 
\[ -\psi_n'' + U \psi_n = \lambda_n \psi_n,\]
for some constant $\lambda_n$.  
The following result was proved in
\cite{Gomez-Ullate2014}.
\begin{thm}
  Every rational extension of the harmonic oscillator that is exactly solvable by polynomials has the form
  \[ U(x) = x^2 - 2 D_x^2 \log \Wr[H_{m_1},\ldots, H_{m_\ell}]+C,\]
  where $\{m_1,\dots,m_\ell\}$ are positive integers and $C$ is an
  additive constant.
\end{thm}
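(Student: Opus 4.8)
\noindent\emph{Proof strategy.} The plan is to show that exact solvability by polynomials forces $L=-D_x^2+U$ to be a Darboux--Crum transformation of the bare oscillator $L_0=-D_x^2+x^2$ whose seed functions are quasi-rational eigenfunctions of $L_0$, and then to use the pseudo-Wronskian machinery of Section~\ref{sec:pW} to replace the resulting Wronskian determinant by a pure Hermite Wronskian. The first of these is the analytic core of the proof; the second is essentially a bookkeeping consequence of Theorem~\ref{thm:detequiv}.

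\noindent\emph{Step 1 (the analytic core).} Write $U=x^2+a/b$ with $\deg a<\deg b$, so $U-x^2\to0$ at infinity and the two solutions of $-\psi''+U\psi=\lambda\psi$ behave like $\rho_\pm(x)\,e^{\pm x^2/2}$ near $x=\infty$, with algebraic prefactors. At each finite pole $x_0$ of $U$ the two local Frobenius exponents sum to $1$, since $L$ has no $\psi'$ term; because $U$ has a genuine pole at $x_0$, the presentation $\psi_n=\mu(x)\,y_n(\zeta(x))$ is forced to be singular there --- $\mu$ must have a zero or pole, or $\zeta$ a critical point --- which pins the local exponents to integers and exhibits $\psi_n$, for all but finitely many $n$, as a single-valued local solution. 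Playing off the resulting meromorphic local solutions (together with their complementary-exponent partners) against the fact that the coefficient of the possible logarithmic term in the second local solution at $x_0$ is a polynomial in the eigenvalue which vanishes for the infinitely many values $\lambda_n$, one concludes that that coefficient vanishes identically, i.e.\ $x_0$ is an apparent singularity for every $\lambda$. Hence $L$ is monodromy-free with a quadratically growing potential, and by the classification of such operators (Oblomkov~\cite{Oblomkov1999}; see also Adler~\cite{Adler1994}) there is a constant $C_0$ for which $L-C_0$ is a Darboux--Crum transformation of $L_0$. This step --- extracting the apparent-singularity property from the bare hypothesis, and then invoking (or reproving, via a Bochner-type analysis of the second-order operator whose polynomial eigenfunctions are the $y_n$) the structure theorem for monodromy-free quadratically growing Schr\"odinger operators --- is the main obstacle; the rest is routine.

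\noindent\emph{Step 2 (seeds, Crum's formula, and reduction).} A Darboux--Crum transformation of $L_0$ is specified by a finite list of eigenfunctions, and for the transformed potential to be rational these seeds must be quasi-rational; as recalled after \eqref{eq:hermiteDE}, the quasi-rational eigenfunctions of $L_0$ are exactly $\h_k(x)\,e^{-x^2/2}$, with eigenvalue $2k+1$, and $\th_k(x)\,e^{x^2/2}$, with eigenvalue $-2k-1$, for $k\ge0$. Writing the seeds as $e^{x^2/2}\th_{s_1},\dots,e^{x^2/2}\th_{s_p},e^{-x^2/2}\h_{t_q},\dots,e^{-x^2/2}\h_{t_1}$ --- which have pairwise distinct eigenvalues, hence are linearly independent --- Crum's theorem gives
\[ U(x)=x^2-2D_x^2\log\Wr[\,e^{x^2/2}\th_{s_1},\dots,e^{x^2/2}\th_{s_p},e^{-x^2/2}\h_{t_q},\dots,e^{-x^2/2}\h_{t_1}\,]+C_0. \]
Let $M\subset\Z$ be the Maya diagram with $M_-=\{s_1,\dots,s_p\}$ and $M_+=\{t_1,\dots,t_q\}$. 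Using $e^{x^2/2}\th_{s_a}=e^{-x^2/2}\,e^{x^2}\th_{s_a}$, extracting a common factor $e^{-x^2/2}$ from every column via \eqref{eq:Wrhomog}, and recognising Definition~\ref{def:HpW}, one obtains
\[ \Wr[\,e^{x^2/2}\th_{s_1},\dots,e^{-x^2/2}\h_{t_1}\,]=e^{-\frac{p+q}{2}x^2}\,\Wr[\,e^{x^2}\th_{s_1},\dots,e^{x^2}\th_{s_p},\h_{t_q},\dots,\h_{t_1}\,]=e^{\frac{p-q}{2}x^2}\,\hM. \]
Since $D_x^2\log e^{\alpha x^2}=2\alpha$ is constant, it is absorbed into the additive term, so $U(x)=x^2-2D_x^2\log\hM+C_1$. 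Finally, by Corollary~\ref{cor:uniquewronsk} we have $\hM=c\,\Wr[\h_{m_\ell},\dots,\h_{m_1}]$ for a nonzero constant $c$ and unique positive integers $m_1>\dots>m_\ell$, and the constant drops out of $D_x^2\log$; hence $U(x)=x^2-2D_x^2\log\Wr[\h_{m_1},\dots,\h_{m_\ell}]+C$, as claimed.
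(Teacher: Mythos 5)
The paper does not actually prove this theorem: it is imported with the single sentence ``The following result was proved in \cite{Gomez-Ullate2014}'', so there is no internal argument to measure your proposal against. Judged on its own terms, your Step 2 is correct and meshes exactly with the paper's machinery: once $L-C_0$ is known to be a Darboux--Crum transformation of $L_0$ with quasi-rational seeds, Crum's formula, the extraction of the factor $e^{-x^2/2}$ from every seed via \eqref{eq:Wrhomog}, Definition \ref{def:HpW} and Corollary \ref{cor:uniquewronsk} give the stated Hermite--Wronskian form. (If you invoke \cite{Oblomkov1999} in full strength it already outputs the Wronskian of Hermite polynomials, so part of Step 2 is redundant; but the route through $H_M$ is harmless and matches the paper's viewpoint.)

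The genuine gap is in Step 1, precisely where you flag ``the main obstacle'': the deduction of trivial monodromy at a pole $x_0$ from the bare hypothesis of exact solvability by polynomials. Your argument is that the logarithmic obstruction at $x_0$ is a polynomial in $\lambda$ vanishing at the infinitely many $\lambda_n$, hence identically. But the existence of the single meromorphic solution $\psi_n=\mu\, y_n(\zeta)$ at $\lambda=\lambda_n$ only forces the log obstruction to vanish if $\psi_n$ realizes the \emph{smaller} of the two indicial roots at $x_0$; a meromorphic solution with the larger exponent always exists and carries no information. Which root $\psi_n$ realizes is governed by $\mu$, $\zeta$ and the order of vanishing of $y_n$ at $\zeta(x_0)$, none of which you control, and your phrase ``together with their complementary-exponent partners'' names exactly the second independent meromorphic solution whose construction is the missing content. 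One also needs to show first that both indicial roots are integers (your ``pins the local exponents to integers'' is asserted, not argued). Finally, in Step 2 you pass from ``$L-C_0$ is a Darboux--Crum transformation of $L_0$'' to ``the seeds are quasi-rational'' by appeal to rationality of the output; this too needs an argument, since rationality of the final potential does not a priori force each intermediate potential of the chain, hence each seed, to be quasi-rational. These are exactly the points where the cited reference \cite{Gomez-Ullate2014} (via a Bochner-type analysis of the gauge-equivalent operator with polynomial eigenfunctions) does the real work; as written, your Step 1 is a plausible roadmap rather than a proof.
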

\noindent
For a Maya diagram $M\subset \Z$, define
\begin{equation}
  \label{eq:UMdef}
  U_M = x^2 - 2 D_x^2 \log H_M + 2 |M_+|- 2 |M_-| .
\end{equation}
\begin{prop}
  Let $M\subset \Z$  and $M'=M+k,\; k\in\Z$ be equivalent  Maya  diagrams. Then
  \begin{equation}
    \label{eq:UM+k}
    U_{M'} = U_M+2k,\quad k\in \Z.
  \end{equation}
\end{prop}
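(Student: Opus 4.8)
The plan is to reduce the general shift $M' = M+k$ to the two elementary shifts $M' = M\pm 1$ already analyzed in Lemmas \ref{lem:shift1} and \ref{lem:shift2}, exactly as in the proof of Theorem \ref{thm:detequiv}. Since $U_{M+k}$ depends additively on $k$ as soon as we verify the claim for $k = \pm 1$ (the general case following by telescoping: $U_{M+k} = U_{(M+k-1)+1}$, etc.), it suffices to treat a single unit shift. By symmetry of the statement under $M \leftrightarrow M'$, I will prove $U_{M+1} = U_M + 2$, i.e.\ handle the case $k = 1$; equivalently, set $M'' = M+1$ and show $U_M = U_{M''} - 2$.

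The key input is that $H_M$ and $H_{M'}$ are \emph{proportional}: by Theorem \ref{thm:detequiv} (or directly by Lemmas \ref{lem:shift1}--\ref{lem:shift2} after translating $M$ into a position where either $0\in M$ or $-1\notin M$, which is always possible for a unit shift of a Maya diagram), there is a nonzero constant $c$ with $H_{M'} = c\, H_M$. Hence $D_x^2 \log H_{M'} = D_x^2 \log H_M$, since the additive constant $\log c$ is killed by $D_x^2$. Therefore the only contribution to $U_{M'} - U_M$ comes from the spectral-shift term $2|M'_+| - 2|M'_-|$ versus $2|M_+| - 2|M_-|$ in \eqref{eq:UMdef}. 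So the entire proof comes down to the combinatorial bookkeeping: if $M' = M+1$, how do the cardinalities $|M_\pm|$ change?

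For the bookkeeping, recall $M_+ = \{m\in M : m\geq 0\}$ and $M_- = \{-m-1 : m<0,\ m\notin M\}$, and that by Proposition \ref{prop:everymooissacred}-type reasoning a unit shift $M \mapsto M-1$ (respectively $M\mapsto M+1$) changes the girth $|M_+| + |M_-|$ by exactly $\pm 1$ only when a box crosses the origin; in general one checks directly from the definitions that passing from $M$ to $M+1$ increases $|M_+|$ by one and leaves $|M_-|$ fixed if $-1\notin M$ (a hole crosses from the negative side to become the new box at position $0$... actually one must be careful: $-1\notin M$ means position $-1$ is empty, so shifting up, position $0$ of $M+1$ is empty too — let me instead argue via the bent-diagram description). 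By the Proposition preceding Section \ref{sec:pW} relating the Frobenius symbol to bent diagrams, $|M_+| = j_0$ and $|M_-| = i_0$ where $(i_0,j_0)$ is the point of the bent diagram $B$ of $M$ at index $0$; and \eqref{eq:injnshift} gives that the bent diagram of $M' = M+k$ has its index-$0$ point equal to $(i_{k}, j_k)$, the index-$k$ point of $B$. Since each step of a bent diagram is either $(1,0)$ or $(0,-1)$, we get $j_k - i_k = (j_0 - i_0) - k$ telescoping over the $k$ unit steps (each step decreases $j - i$ by exactly $1$). Hence $|M'_+| - |M'_-| = j_k - i_k = (j_0 - i_0) - k = |M_+| - |M_-| - k$, so $2|M'_+| - 2|M'_-| = 2|M_+| - 2|M_-| - 2k$, and combining with the unchanged Wronskian-log term yields $U_{M'} = U_M - 2k$.

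The only subtlety — and the step I'd flag as needing the most care — is the sign/direction convention: the statement claims $U_{M'} = U_M + 2k$ for $M' = M+k$, whereas the bent-diagram count I just sketched gives $U_{M'} = U_M - 2k$, so I would double-check the orientation conventions in \eqref{eq:injnshift} and in the definition of $H_M$ (whether a ``$+k$'' shift of $M$ corresponds to adding or removing boxes on the positive side) and reconcile accordingly; most likely the resolution is that \eqref{eq:injnshift} should be read with the bent diagram of $M+k$ having index-$0$ point $(i_{-k}, j_{-k})$, which flips the sign to match. Apart from tracking that convention, the proof is a two-line argument: proportionality of the pseudo-Wronskians kills the logarithmic derivative difference, and the linear dependence of the spectral shift $2|M_+| - 2|M_-|$ on the position — visible directly from the telescoping bent-diagram steps — supplies the $\pm 2k$.
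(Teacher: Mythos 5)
Your proof has the same skeleton as the paper's: Theorem \ref{thm:detequiv} makes $H_{M'}$ a nonzero constant multiple of $H_M$, so $D_x^2\log H_{M'}=D_x^2\log H_M$, and the whole content is the bookkeeping of $|M_+|-|M_-|$ under a shift. Where you diverge is in how that bookkeeping is done. The paper reduces to $k=1$ and argues the two cases directly from the definitions of $M_\pm$ (if $-1\in M$ then $|M'_+|=|M_+|+1$ and $|M'_-|=|M_-|$; if $-1\notin M$ then $|M'_+|=|M_+|$ and $|M'_-|=|M_-|-1$), then inducts --- this is exactly the computation you started and abandoned midstream. Your bent-diagram route is a legitimate and arguably cleaner alternative: since every step of the rim is $(1,0)$ or $(0,-1)$, the quantity $j_n-i_n$ decreases by exactly $1$ per unit of $n$, which handles general $k$ without induction. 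The one loose end is the sign, and your guessed resolution is the correct one: \eqref{eq:injnshift} is stated for $M'=M-k$, namely $(i'_n,j'_n)=(i_{n+k},j_{n+k})$, so for $M'=M+k$ the origin of the shifted bent diagram sits at index $-k$ of $B$, giving $|M'_+|-|M'_-|=j_{-k}-i_{-k}=(j_0-i_0)+k$ and hence $U_{M'}=U_M+2k$ as claimed. You should not leave this as ``most likely'' in a final write-up --- pin the convention down --- but with that fixed your argument is complete and correct.
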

\begin{proof}
  By Theorem \ref{thm:detequiv}, 
  \[ D_x^2 \log H_M = D_x^2 \log H_{M'}\]
  To establish the value of the shift in \eqref{eq:UM+k}, it suffices
  to consider the case of $k=1$.  The general case then follows by
  induction.  Now there are two subcases.  If $-1\in M$ then
  $M'_- = M_-$ and $M'_+ = M_++1$.  If $-1\notin M$, then
  $M'_- = M_--1$ and $M'_+ = M_+$.  Therefore, in both cases,
  \[ M'_+ - M'_- = M_+ - M_- + 1.\]
\end{proof}
Thus, 
the set of rational extensions of the harmonic oscillator modulo additive constants is in bijective
correspondence with the set of unlabelled Maya diagrams.  

\begin{definition}
  We define a Maya diagram chain to be a sequence of Maya diagrams
  $M_1, M_2,\ldots, M_\ell$ such that there exist
  $m_1,\ldots, m_\ell\in \Z$ satisfying
  \[ M_{i+1} =
  \begin{cases}
    M_{i} \cup \{ m_i \} & \text{ if } m_i \notin M_i,\\
    M_{i} \setminus \{ m_i \} & \text{ if } m_i \in M_i,
  \end{cases},\quad i=1,\ldots, \ell-1.
  \]
\end{definition}

The following are proved in \cite{GGUM1}.  Also see \cite{clarkson1}
and the references therein.
\begin{prop}
  \label{prop:isotonicmaya}
  Let $M_i,\; i=1,\ldots, \ell$ be a Maya diagram chain. Then,
  $L_i = -D^2 + U_{M_i},\; i=1,\ldots, \ell$ is a factorization chain.
  Conversely, every factorization chain of rational extensions of the harmonic oscillator that are solvable by polynomials arises in precisely
  this fashion.
\end{prop}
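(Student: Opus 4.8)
The plan is to reduce the statement to a single Darboux step and then iterate. For the forward direction, fix a Maya diagram $M$, an integer $m$, and set $M'=M\triangle\{m\}$ (that is, $M'=M\cup\{m\}$ if $m\notin M$ and $M'=M\setminus\{m\}$ if $m\in M$); I would show that $L_M=-D_x^2+U_M$ and $L_{M'}=-D_x^2+U_{M'}$ fit into a factorization step of the form \eqref{eq:Dxform}. The seed function I would use is the quasi-rational function
\[
  \varphi_{M,m}=\frac{H_{M'}}{H_M}\,e^{-\sigma x^2/2},\qquad
  \sigma=\begin{cases} +1,& m\notin M,\\ -1,& m\in M.\end{cases}
\]
The one nontrivial point is that $\varphi_{M,m}$ is a (formal) eigenfunction of $L_M$, say $L_M\varphi_{M,m}=\mu\,\varphi_{M,m}$. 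Granting this, $f:=\varphi_{M,m}'/\varphi_{M,m}$ gives the first factorization in \eqref{eq:Dxform}, and the Darboux-transformed potential is
\[
  U_M-2D_x^2\log\varphi_{M,m}=x^2-2D_x^2\log H_{M'}+\bigl(2|M_+|-2|M_-|\bigr)+2\sigma .
\]
A short count of how $|M_\pm|$ change when $m$ is added to or removed from $M$ (splitting into the cases $m\ge 0$ and $m<0$) shows $2\sigma=(2|M'_+|-2|M'_-|)-(2|M_+|-2|M_-|)$, so this potential is exactly $U_{M'}$ as defined in \eqref{eq:UMdef}. Hence $L_{M'}$ is the Darboux transform of $L_M$ along $\varphi_{M,m}$, which is precisely \eqref{eq:Dxform}; applying this at every step $M_i\to M_{i+1}$ of a Maya diagram chain produces the asserted factorization chain.

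The heart of the argument is the eigenfunction claim for $\varphi_{M,m}$. I would first establish it for $M$ in standard form, where by Corollary~\ref{cor:uniquewronsk} the pseudo-Wronskian $H_M$ is a scalar multiple of a genuine Hermite Wronskian $\Wr[H_{m_\ell},\dots,H_{m_1}]$ and $L_M$ is the corresponding Krein--Adler rational extension of the harmonic oscillator. There the formal eigenfunctions are given by Crum's formula as ratios of Wronskians of the oscillator's seed functions $H_n e^{-x^2/2}$ (for $n\ge 0$) and $\th_k\, e^{x^2/2}$ at the negative levels, and the fundamental Hermite identities \eqref{eq:hermids} together with the homogeneity identity \eqref{eq:Wrhomog} convert each such Wronskian ratio into precisely $e^{\mp x^2/2}\,H_{M\triangle\{m\}}/H_M$ --- this is the same manipulation that underlies the two determinantal forms \eqref{eq:pWdef1}--\eqref{eq:pWdef2} of $H_M$. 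For an arbitrary labelling I would then invoke the preceding proposition ($U_{M-k}=U_M-2k$) and Theorem~\ref{thm:detequiv}: together they show that $D_x^2\log H_M$, and hence the family $\{\varphi_{M,m}\}$ up to an overall Gaussian factor, depends only on the unlabelled diagram, so a shift of the origin reduces to the standard-form case. This transfer, with the bookkeeping of the Gaussian prefactors and the spectral shifts, is where I expect most of the work to lie.

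For the converse, let $L_i=-D_x^2+U_i$ be a factorization chain of rational extensions of the harmonic oscillator solvable by polynomials. By the classification theorem quoted above (from \cite{Gomez-Ullate2014}) and the remark that rational extensions modulo additive constants biject with unlabelled Maya diagrams, each $U_i$ equals $U_{M_i}$ for a labelled Maya diagram $M_i$, the label fixing the additive constant. It remains to show $M_{i+1}=M_i\triangle\{m_i\}$ for some $m_i$. From \eqref{eq:Dxform}, $U_{i+1}=U_i-2D_x^2\log\psi_i$ with $L_i\psi_i=\lambda_i\psi_i$. Since $U_i$ is $x^2$ plus a decaying rational function, the eigenvalue equation forces $\psi_i\sim e^{\pm x^2/2}$ as $x\to\infty$; writing $\psi_i=g_i\,e^{\pm x^2/2}$, the requirement that $U_{i+1}$ again be a rational extension of the harmonic oscillator solvable by polynomials forces $g_i$ to be rational, so $\psi_i=c\,\varphi_{M_i,m}$ for some $m\in\Z$. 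This is the step where the hypothesis ``solvable by polynomials'' is genuinely used, and ruling out seed functions outside this quasi-rational family is the main obstacle in this direction. The Darboux transform along $\varphi_{M_i,m}$ is $U_{M_i\triangle\{m\}}$ by the forward direction, and matching additive constants forces $M_{i+1}=M_i\triangle\{m\}$; hence the chain is a Maya diagram chain.
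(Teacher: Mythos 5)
The paper does not actually prove this proposition: it is stated with the attribution ``The following are proved in \cite{GGUM1}'', a reference to a paper in preparation, so there is no in-text argument to compare yours against. Judged on its own, your outline of the forward direction is sound and is the natural one. The seed function $\varphi_{M,m}=(H_{M\triangle\{m\}}/H_M)\,e^{-\sigma x^2/2}$ with your sign convention is correct (e.g.\ $M=\Z_-$, $m\geq 0$ gives the Crum seed $H_m e^{-x^2/2}$; $M=\Z_-$, $m=-1$ gives $e^{x^2/2}$), the bookkeeping $2\sigma=(2|M'_+|-2|M'_-|)-(2|M_+|-2|M_-|)$ checks out in all four cases, and the reduction of the eigenfunction claim to standard form via $U_{M-k}=U_M-2k$ and Theorem \ref{thm:detequiv} is legitimate, since those results show the whole family $\{\varphi_{M,m}\}$ depends on the labelling only through Gaussian prefactors and constant spectral shifts.

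The genuine gap is in the converse, at the step you yourself flag. From $U_{i+1}-U_i=-2D_x^2\log\psi_i$ rational you need two things, neither of which your argument supplies. First, rationality of $\psi_i'/\psi_i$ does not follow merely from the asymptotics $\psi_i\sim e^{\pm x^2/2}$; you need the Riccati argument: $w=\psi_i'/\psi_i$ has rational derivative and satisfies $w'+w^2=U_i-\lambda_i$ with rational right-hand side, which forces $w$ itself (not just $w$ up to logarithms) to be rational. Second, and more seriously, even granting that $\psi_i=g_i e^{\pm x^2/2}$ with $g_i$ rational, the identification $\psi_i=c\,\varphi_{M_i,m}$ for some $m\in\Z$ is a classification statement --- that the \emph{only} quasi-rational formal eigenfunctions of $L_{M_i}$ are the functions $\varphi_{M_i,m}$ --- which you assert rather than prove. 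This requires intertwining back to the harmonic oscillator and invoking the classical fact that its only quasi-rational formal eigenfunctions are $H_n e^{-x^2/2}$ and $\th_n e^{x^2/2}$, then checking that the intertwining does not produce spurious quasi-rational solutions at the deleted or added levels. That classification is precisely the hard content the paper defers to \cite{GGUM1}, so as written your converse is an outline with its central lemma still open.
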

\begin{thm}
  Every rational solution of \PIV corresponds to a 3-cyclic chain of
 rational extensions of the harmonic oscillator.
\end{thm}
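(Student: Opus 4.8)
The plan is to reverse the Adler--Veselov dictionary between \PIV and 3-step cyclic factorization chains \cite{Adlerchains,veselov93}, and then recognize the resulting chain as a Maya-diagram chain so that Proposition \ref{prop:isotonicmaya} applies. Given a rational solution $y=y(t)$ of \PIV with parameters $a,b$, I would first fix any $\Delta>0$ together with constants $\alpha_1,\alpha_2,\alpha_3$ satisfying $\alpha_1+\alpha_2+\alpha_3=-\Delta$, $\alpha_3-\alpha_1=\Delta a$ and $\alpha_2^2=-\frac12\Delta^2 b$; the last equation has a real solution because every rational solution of \PIV has $b\le 0$. Reading \eqref{eq:f123W} in reverse then produces $z$, $W=W(z)$ and $f_1,f_2,f_3$, and since $y$ is rational so are $W$ and each $f_i$. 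By \cite{Adlerchains,veselov93} the triple $(f_1,f_2,f_3)$ satisfies the 3-step cyclic chain \eqref{eq:3chain}, and setting $L_i=-D_z^2+U_i$ with $U_i=f_i'+f_i^2+\lambda_i$ as in \eqref{eq:Dxform}--\eqref{eq:Lipsii} yields a factorization chain $L_1,L_2,L_3$, continued cyclically by $L_4=L_1+\Delta$, with rational potentials $U_i$.

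Next I would verify that the $U_i$ are rational extensions of one and the same harmonic oscillator. Summing the three equations of \eqref{eq:3chain} gives $2(f_1+f_2+f_3)'=-\Delta$, hence $f_1+f_2+f_3=-\frac{\Delta}{2}z+c$; feeding this into the pairwise identities $f_{i+1}^2-f_i^2=\alpha_i-(f_i+f_{i+1})'$ and using that the $f_i$ are rational forces each of them to have polynomial part of degree exactly one, $f_i=e_iz+g_i+h_i(z)$, where $h_i$ is a proper rational function vanishing at infinity, $e_i\ne0$ (otherwise $f_1+f_2+f_3$ could not be linear), and both $e_i^2$ and $e_ig_i$ are independent of $i$. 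Since $f_i'$ contributes only a constant, $U_i=e_i^2z^2+2e_ig_iz+\mathrm{const}+(\text{decaying rational})$, so a single affine substitution $x=|e|(z-z_0)$ normalizes all four $U_i$ to the form $x^2+C_i+a_i(x)/b_i(x)$ with $\deg a_i<\deg b_i$. Hence $L_1,\dots,L_4$ are rational extensions of the fixed harmonic oscillator $-D_x^2+x^2$.

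The remaining and most delicate point is to show that each $L_i$ is \emph{exactly solvable by polynomials}. Granting this, the classification theorem of \cite{Gomez-Ullate2014} realizes $U_i=U_{M_i}$ as a Hermite Wronskian for a Maya diagram $M_i$; the converse direction of Proposition \ref{prop:isotonicmaya} then identifies $M_1,M_2,M_3,M_4$ as a Maya-diagram chain, and combining $U_4=U_1+\Delta$ with the shift identity \eqref{eq:UM+k} gives $M_4=M_1+k$ with $2k=\Delta$ --- exactly a 3-cyclic chain of rational extensions of the harmonic oscillator, out of which $y$ is recovered through \eqref{eq:f123W}. To establish solvability by polynomials I would use cyclicity directly: the composite of the three Darboux steps intertwines $L_1$ with $L_1+\Delta$, so iterating it realizes $L_1+n\Delta$ as a Darboux--Crum transform of $L_1$ for every $n\ge0$; pulling the intermediate eigenfunctions $\psi_2,\psi_3$ back along the chain produces formal eigenfunctions of $L_1$ at the energies $\lambda_j+n\Delta$ ($j=1,2,3$, $n\ge0$), and one checks that these have the form $\mu(x)\,p_n(x)$ with $\mu(x)=b_1(x)^{-1}e^{-x^2/2}$ and $p_n$ a polynomial, exhausting all sufficiently large degrees.

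Steps 1 and 2 are essentially formal --- the reversal of \eqref{eq:f123W} and routine asymptotic bookkeeping. The hard part will be Step 3: certifying that a rational 3-cyclic chain is not merely built from rational extensions of the harmonic oscillator but from ones solvable by polynomials, i.e.\ that no non-polynomial formal eigenfunctions intrude and the $f_i$ really are logarithmic derivatives of polynomial-times-Gaussian functions. This is precisely what, together with the converse of Proposition \ref{prop:isotonicmaya}, is carried out in \cite{GGUM1}; see also \cite{clarkson1} for the classical description of rational \PIV solutions via Hermite Wronskians, which gives an alternative route to the same conclusion.
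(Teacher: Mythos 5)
First, note that the paper does not actually prove this theorem: it is stated, together with Proposition \ref{prop:isotonicmaya}, as a result imported from \cite{GGUM1} (with a pointer to \cite{clarkson1}), so there is no in-paper argument to compare yours against. Your proposal does outline the strategy that the surrounding propositions are clearly designed to support --- invert the Adler--Veselov reduction \eqref{eq:f123W} to pass from a rational $y$ to a rational $3$-step cyclic chain \eqref{eq:3chain}, normalize the potentials to rational extensions of $-D_x^2+x^2$, and invoke the classification of \cite{Gomez-Ullate2014} together with the converse half of Proposition \ref{prop:isotonicmaya} --- and Steps 1 and 2 are essentially sound, modulo the unproved assertion that every rational solution of \PIV has $b\le 0$, which is itself a nontrivial piece of the Lukashevich--Gromak--Murata classification and must either be cited or derived rather than taken for free.

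The genuine gap is exactly where you flag it, and your sketch for Step 3 does not close it. To apply the classification theorem of \cite{Gomez-Ullate2014} you must show that each $L_i$ is exactly solvable by polynomials; equivalently, in the language of Proposition \ref{prop:isotonicmaya}, that each seed function $\psi_i=\exp\int f_i$ is quasi-rational. Writing $f_i=e_iz+g_i+h_i$ with $h_i$ a proper rational function, one has $\psi_i=e^{e_iz^2/2+g_iz}\exp\int h_i$, and $\exp\int h_i$ is rational only if every pole of $h_i$ is simple with integer residue. Nothing in the cyclic-intertwining argument you sketch establishes this: rationality of $y$, hence of the $f_i$ and of the $U_i$, does not by itself exclude non-integer residues, and the claim that the pulled-back eigenfunctions have the form $\mu(x)\,p_n(x)$ with $p_n$ polynomial and with degrees exhausting a cofinite subset of $\Nz$ is asserted rather than proved (one must control $\deg b_1$ and the action of the intertwiners on degrees, and also track the sign of $e_i$, which decides whether a seed carries $e^{-x^2/2}$ or $e^{+x^2/2}$). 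Since you ultimately defer this decisive point to \cite{GGUM1} --- the very reference the paper itself cites for the theorem --- the proposal is a reasonable roadmap but not a proof.
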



In light of the above, we wish to classify 3-cyclic chains of Maya
diagrams $M_1, M_2, M_3, M_4$ such that $M_4 = M_1+k$ for some
$k\in \Z$.  To accomplish this, we define the following classes of
Maya diagrams:
\begin{gather}
  \MGH(m,\ell) =  \Z_{-} \cup \{ j\in \Z \colon m\leq j
                 \leq m+\ell-1 \},\quad m,\ell \in \Nz,\\
  \MO(\ell_1,\ell_2) = \Z_{-} \cup \{ 3j+1 \colon
                0\leq j < \ell_1 \} \cup \{ 3j+2 \colon 0 \leq j
                < \ell_2 \},\quad \ell_1, \ell_2 \in \Nz,
\end{gather}
where $\Z_-$ is the set of negative integers.
For example,
\[
\MGH(2,5)_+ = \{ 2,3,4,5,6 \},\qquad \MO(2,5)_+ = \{
1,2,4,5,8,11,14\}.\]
We will call the former a Maya diagram of GH-type (Generalized
Hermite), and the latter, a Maya diagram of O-type (Okamoto).
Before proceeding, we note the following degeneracies of this
notation:
\[ \MGH(m,0) = \MO(0,0) = \Z_-,\quad \MGH(0,\ell) = \Z_- + \ell.\]

The following results are proved in \cite{GGUM1}.
\begin{prop}
  \label{prop:MGHU}
  Suppose that $M_i\subset \Z,\; i=1,\ldots, 4$ is a chain of Maya
  diagrams such that $M_4=M_1+k$ for some $k\in \Z$.  Then, up to
  translation, $M_1$ is either a Maya diagram of GH-type or of O-type.
  In the first case,
  \begin{equation}
    \label{eq:GHmaya}
    M_1 = \MGH(m,\ell),\quad M_4 = (M_1\setminus m) \cup \{ 0, m+\ell
    \},\quad m,\ell \in \Nz .
  \end{equation}
  In the second case,
  \begin{equation}
    \label{eq:Umaya}
    M_1 = \MO(\ell_1,\ell_2),\quad M_4 = M_1 \cup \{ 0, 3\ell_1+1,
    3\ell_2+2 \},\quad \ell_1,\ell_2\in \Nz.
  \end{equation}
\end{prop}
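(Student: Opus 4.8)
The plan is to convert the statement about three-step chains into a purely combinatorial one about the symmetric difference $M_1 \triangle (M_1+k)$, and then to read off the two normal forms by a residue-class analysis.

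\emph{Reduction.} Reversing a chain yields another chain with $k$ replaced by $-k$, so I may assume $k \ge 0$. If the three successive moves flip the integers $n_1,n_2,n_3$, then $M_4 = M_1 \triangle S$ where $S$ is the set of positions flipped an odd number of times; since exactly three flips occur, $|S|\in\{1,3\}$, and in particular $S\ne\emptyset$, which rules out $k=0$. Each single flip changes the charge $c(M):=|M_+|-|M_-|$ by $\pm 1$ (inserting an element raises $c$ by $1$, deleting one lowers it), and $c(M+k)=c(M)+k$ as derived in the proof of \eqref{eq:UM+k}; hence $k=c(M_4)-c(M_1)$ is odd with $|k|\le 3$, so $k\in\{1,3\}$. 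Since moreover $S=M_1\triangle M_4=M_1\triangle(M_1+k)$, everything reduces to classifying the Maya diagrams $M$ with $|M\triangle(M+k)|\le 3$ for $k\in\{1,3\}$, while keeping track of which elements of $S$ are insertions and which are deletions.

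\emph{The two families.} For $k=1$, an integer $m$ lies in $M\triangle(M+1)$ precisely when the indicator sequence of $M$ changes value between $m-1$ and $m$; since this sequence equals $1$ far to the left and $0$ far to the right, the number of such $m$ is the number of maximal constant runs minus one, hence odd. Requiring it to be $\le 3$ forces $M$, after translating the first change to the origin, to be $\MGH(m,\ell)$ — a single finite run of length $\ell$ beginning at position $m$ — with the sub-case $|M\triangle(M+1)|=1$ being $\MGH(m,0)=\Z_-$ up to translation. For $k=3$, I would split $\Z$ into its three residue classes modulo $3$, identify each with $\Z$, and observe that $M\triangle(M+3)$ meets the class $3\Z+i$ in the change set of the restriction $M\cap(3\Z+i)$; this is again an odd number $\ge 1$, so $|M\triangle(M+3)|\ge 3$ with equality iff each restriction is a half-line $\{\,3n+i : n<a_i\,\}$. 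A short case analysis on which of $a_0,a_1,a_2$ is smallest then shows that a suitable translation (by a multiple of $3$ to normalise, by $1$ or $2$ to cyclically permute the residues) brings $M$ to $\MO(\ell_1,\ell_2)$ with $\ell_1,\ell_2\ge 0$.

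\emph{The form of $M_4$, and the main obstacle.} It remains to substitute the normal forms into $M_4=M_1+k$: for $M_1=\MGH(m,\ell)$ one checks directly that $M_1+1=(M_1\setminus\{m\})\cup\{0,m+\ell\}$, and for $M_1=\MO(\ell_1,\ell_2)$ that $M_1+3=M_1\cup\{0,\,3\ell_1+1,\,3\ell_2+2\}$, which are exactly \eqref{eq:GHmaya} and \eqref{eq:Umaya}; conversely, each of these translates is produced by performing the three listed flips in any order, all legal because the three positions are distinct and each is used once. I expect the residue-class step for $k=3$ to require the most care: one must verify that every threshold vector $(a_0,a_1,a_2)\in\Z^3$ lies in the translation orbit of some $(0,\ell_1,\ell_2)$ with $\ell_i\ge 0$, and one must check that the boundary cases where a family collapses (e.g.\ $|S|=1$, forcing $M_1$ to be a translate of $\Z_-$) are still covered by \eqref{eq:GHmaya} with an appropriate — and not unique — choice of $m$ and $\ell$.
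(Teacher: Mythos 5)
The paper does not actually prove Proposition \ref{prop:MGHU}: it is stated with the remark ``The following results are proved in \cite{GGUM1}'', a reference in preparation, so there is no in-paper argument to compare yours against. Judged on its own merits, your proof is sound and essentially complete. The reduction (reverse the chain to get $k>0$; three single flips give $|M_1\triangle M_4|\in\{1,3\}$; the charge $c(M)=|M_+|-|M_-|$ changes by $\pm1$ per flip and by $k$ under $M\mapsto M+k$, forcing $k\in\{1,3\}$) is correct, as are the two classification steps: for $k=1$ the elements of $M\triangle(M+1)$ are exactly the sign changes of the indicator sequence, whose number is odd, and $\le 3$ changes is precisely the $\MGH$ shape; for $k=3$ the mod-$3$ decomposition gives $|M\triangle(M+3)|\ge 3$ with equality iff each residue class is a half-line. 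The two details you flag do check out: (i) under $M\mapsto M-1$ the threshold vector transforms as $(b_0,b_1,b_2)\mapsto(b_1,b_2,b_0-1)$, so after subtracting $\min_i b_i\cdot(1,1,1)$ you translate by $0$, $-1$ or $-2$ according to whether $b_0=0$, else $b_1=0$, else $b_2=0$, and in each case land on $(0,\ell_1,\ell_2)$ with $\ell_1,\ell_2\ge 0$; (ii) the degenerate cases collapse to translates of $\Z_-$, for which \eqref{eq:GHmaya} holds with $m=\ell=0$ (note it fails for $\MGH(m,0)$ with $m\ge1$, so the normalization $m=0$ there is forced, as you observe). The one caveat worth making explicit is that for $k<0$ the literal identities \eqref{eq:GHmaya} and \eqref{eq:Umaya} describe $M_1$ in terms of $M_4$ rather than the reverse, so ``up to translation'' in the statement must be understood to include reversal of the cyclic chain; with that reading your argument fully establishes the proposition.
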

\begin{prop}
  The rational solutions of \PIV\ that arise from 3-cyclic chains of
  the harmonic oscillator fall into one of two classes.  The rational
  solutions of GH-type take the form:
  \begin{align}
    \label{eq:GH1}
    y &= D_t \left[\log
        \frac{H_{\MGH(m,\ell)}}{H_{\MGH(m,\ell+1)}}\right]_{x=t},
    && a =-(1+m+2\ell),\; &&b = -2 m^2,\\
    \label{eq:GH2}
    y &= D_t \left[\log
        \frac{H_{\MGH(m,\ell)}}{H_{\MGH(m-1,\ell)}}\right]_{x=t},
    &&   a=2m+\ell-1,\; &&b=-2\ell^2,\; m>0,\\
    \label{eq:GH3}
    y &= D_t \left[\log
        \frac{H_{\MGH(m,\ell)}}{H_{\MGH(m+1,\ell-1)}}\right]_{x=t}-2t,
    && a=\ell-m-1,\; && b = -2(m+\ell)^2,\; \ell>0.
  \end{align}
  The rational solutions of O-type take the form:
  \begin{align}
    \label{eq:U1}
    y &= -\frac23t+D_t \left[\log
        \frac{H_{\MO(\ell_1,\ell_2)}}{H_{\MO(\ell_1-1,\ell_2-1)}}\right]_{x=
        \frac{t}{\sqrt{3}}}
        ,&&   a= \ell_1+\ell_2,\; && b = -\frac29
           (1-3\ell_1+3\ell_2)^2  \\
    \label{eq:U2}
    y &=  -\frac23t+D_t\left[ \log
        \frac{ H_{\MO(\ell_1,\ell_2)}}{ H_{\MO(\ell_1+1,\ell_2)}}
        \right]_{x=\frac{t}{\sqrt{3}}},
         &&   a= -1-2\ell_1+\ell_2,\; && b = -\frac29 (2+3\ell_2)^2  \\
    \label{eq:U3}
    y &=  -\frac23t+D_t\left[ \log
        \frac{ H_{\MO(\ell_1,\ell_2)}}{ H_{\MO(\ell_1,\ell_2+1)}}
        \right]_{x=\frac{t}{\sqrt{3}}},
         &&   a= -2-2\ell_2+\ell_1,\;&&  b = -\frac29
                    (1+3\ell_1)^2  
  \end{align}
\end{prop}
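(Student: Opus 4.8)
The plan is to unwind the correspondences established above. By the theorem identifying rational solutions of \PIV\ with 3-cyclic chains of rational extensions of the harmonic oscillator, together with Proposition~\ref{prop:isotonicmaya}, every such solution arises from a 3-cyclic Maya-diagram chain $M_1\to M_2\to M_3\to M_4$ with $M_4=M_1+k$; by Proposition~\ref{prop:MGHU} we may assume, after translating, that $M_1=\MGH(m,\ell)$ with $M_4$ as in \eqref{eq:GHmaya}, or $M_1=\MO(\ell_1,\ell_2)$ with $M_4$ as in \eqref{eq:Umaya}. A direct inspection of these formulas shows that $M_4=M_1+1$ (so $k=1$) in the GH case and $M_4=M_1+3$ (so $k=3$) in the O case, and that the three elementary moves composing the chain are: adjoining the box at $0$, deleting the box at $m$, and adjoining the box at $m+\ell$, in the GH case; and adjoining the boxes at $0$, $3\ell_1+1$ and $3\ell_2+2$, in the O case. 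Since a cyclic permutation of a 3-cyclic chain is again one, it suffices to treat separately each of the three possible choices of the \emph{first} step $M_1\to M_2$; these furnish the three members of each family.

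To reconstruct $y$ I use \eqref{eq:f123W}. The spectral shift is $\Delta=U_{M_4}-U_{M_1}=2k$ by the proposition showing $U_{M+k}=U_M+2k$, so $\Delta=2$ and $z=t$ in the GH case, $\Delta=6$ and $z=t/\sqrt3$ in the O case; moreover $y=\sqrt2\,\Delta^{-1/2}W$ with $W=-f_1-\tfrac\Delta2 z$ and $f_1=\psi_1'/\psi_1$, where $\psi_1$ is the seed eigenfunction of $L_1=-D_x^2+U_{M_1}$ effecting the first Darboux step. The crucial input is the Crum--Wronskian description of the eigenfunctions of $L_M$ underlying \eqref{eq:UMdef}, \eqref{eq:Lipsii} and Corollary~\ref{cor:uniquewronsk}: if the first move adjoins a box at a position $n\ge 0$ then $\psi_1=e^{-x^2/2}\,H_{M_2}/H_{M_1}$, whereas if it deletes such a box then $\psi_1=e^{+x^2/2}\,H_{M_2}/H_{M_1}$; in either case $f_1=\varepsilon x+D_x\log(H_{M_2}/H_{M_1})$ with $\varepsilon=-1$ for an adjunction and $\varepsilon=+1$ for a deletion. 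The factorization energy of the $i$-th step is $\lambda_i=2n_i+1$, where $n_i$ is the toggled position (Crum transformations preserving the oscillator ladder), and $\alpha_i=\lambda_i-\lambda_{i+1}$ with $\lambda_4=\lambda_1+\Delta$.

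Substituting $f_1$ into $W$ and then into $y=\sqrt2\,\Delta^{-1/2}W$ with $x=z$, the coefficient of the linear term of $W$ is $-(\varepsilon+\tfrac\Delta2)$: in the GH case it vanishes for an adjunction ($\varepsilon=-1$, $\Delta/2=1$) and equals $-2$ for a deletion, so $y=D_t\log(H_{M_1}/H_{M_2})|_{x=t}$, resp.\ the same plus $-2t$; in the O case the overall factor $1/\sqrt3$ together with $z=t/\sqrt3$ always produces the fixed shift $-\tfrac23 t$ plus $D_t\log(H_{M_1}/H_{M_2})|_{x=t/\sqrt3}$ (all three O-type moves being adjunctions). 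It remains to name $M_2$ in the six cases and read off $a,b$. For GH-type: adjoining $m+\ell$ gives $M_2=\MGH(m,\ell+1)$, hence \eqref{eq:GH1}; deleting $m$ gives $M_2=\MGH(m+1,\ell-1)$ together with the extra $-2t$, hence \eqref{eq:GH3} (with $\ell>0$ needed for the move to exist); adjoining $0$ gives $M_2=\MGH(m-1,\ell)+1$, whence $H_{M_2}\propto H_{\MGH(m-1,\ell)}$ by Theorem~\ref{thm:detequiv} while $D_t$ kills the proportionality constant, hence \eqref{eq:GH2} (with $m>0$). For O-type: adjoining $3\ell_1+1$, $3\ell_2+2$, or $0$ gives $M_2=\MO(\ell_1+1,\ell_2)$, $\MO(\ell_1,\ell_2+1)$, or $\MO(\ell_1-1,\ell_2-1)+3$ respectively, yielding \eqref{eq:U2}, \eqref{eq:U3}, \eqref{eq:U1} (using $H_{\MO(\ell_1-1,\ell_2-1)+3}\propto H_{\MO(\ell_1-1,\ell_2-1)}$ in the last case). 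Finally $a=\Delta^{-1}(\alpha_3-\alpha_1)$ depends only on $\lambda_1$ and $\lambda_2+\lambda_3$, and $b=-2\Delta^{-2}\alpha_2^2$ only on $(\lambda_2-\lambda_3)^2$, so the order of the remaining two moves is irrelevant; inserting the toggled positions $\{0,m,m+\ell\}$, resp.\ $\{0,3\ell_1+1,3\ell_2+2\}$, reproduces exactly the constants displayed in \eqref{eq:GH1}--\eqref{eq:GH3}, resp.\ \eqref{eq:U1}--\eqref{eq:U3}.

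The main obstacle, and the step requiring care, is the sign-and-exponential bookkeeping for the seed $\psi_1$: that deleting a positive box produces a \emph{growing} exponential $e^{+x^2/2}H_{M_2}/H_{M_1}$ rather than a decaying one is precisely what forces the anomalous $-2t$ in \eqref{eq:GH3}. A secondary delicacy is tracking the correct translate of $M_2$, so that the apparent two-box discrepancies in \eqref{eq:GH2} and \eqref{eq:U1} are recognized as single moves followed by a harmless shift, which Theorem~\ref{thm:detequiv} makes invisible under $D_t\log$. Once these are handled, together with the degenerate cases $m=0$, $\ell=0$ or $\ell_i=0$ (which are either vacuous or subsumed in the general formulas), the argument reduces to the chain of substitutions just described.
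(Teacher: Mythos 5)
The paper does not actually prove this proposition: it is quoted, together with Proposition \ref{prop:MGHU}, from the reference [GGUM1] (``in preparation''), so there is no in-text argument to compare yours against. Judged on its own, your derivation is correct and is almost certainly the intended one. The logical skeleton --- reduce to a 3-cyclic Maya chain via Proposition \ref{prop:isotonicmaya}, normalize $M_1$ by Proposition \ref{prop:MGHU}, observe that $y$, $a$, $b$ depend only on the first move because $y$ is built from $f_1$ alone while $a$ and $b$ depend only on $\lambda_1$ and the symmetric functions $\lambda_2+\lambda_3$, $(\lambda_2-\lambda_3)^2$, then enumerate the three possible first moves in each class --- is sound, and I have checked that all six $(a,b)$ pairs and all six functional forms come out exactly as displayed, including the Maya-diagram identifications $\MGH(m,\ell)\cup\{0\}=\MGH(m-1,\ell)+1$ and $\MO(\ell_1,\ell_2)\cup\{0\}=\MO(\ell_1-1,\ell_2-1)+3$, which Theorem \ref{thm:detequiv} renders invisible under $D_t\log$.

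The one step you import without proof is the Crum--Wronskian description of the seed: that the elementary move $M_1\to M_2=M_1\triangle\{n\}$ is effected by $\psi_1=e^{\mp x^2/2}H_{M_2}/H_{M_1}$ with factorization energy $\lambda_1=2n+1$ relative to $L_1=-D_x^2+U_{M_1}$ as normalized in \eqref{eq:UMdef}. This is not established anywhere in the present paper (it is part of what Proposition \ref{prop:isotonicmaya}, also cited to [GGUM1], encodes), so strictly speaking your proof rests on the same external input as the paper's citation. It is, however, the correct statement: the constant $2|M_+|-2|M_-|$ in \eqref{eq:UMdef} is precisely what makes $U_M$ the exact Crum transform of $x^2$, so the oscillator energies $2n+1$ are carried over unshifted --- had you instead taken $\lambda_i=2n_i+1+2|{M_i}_+|-2|{M_i}_-|$, the values of $a$ and $b$ would come out wrong and would moreover depend on the order of the second and third moves, so this is genuinely the delicate point and you have resolved it correctly. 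Your sign bookkeeping $\varepsilon=\mp1$ for adjunction versus deletion, which produces the $-2t$ term in \eqref{eq:GH3} and the uniform $-\tfrac23 t$ in the O-type formulas via \eqref{eq:f123W}, also checks out, and is consistent with the worked examples in the paper (e.g.\ $(a,b)=(-11,-8),(7,-32),(1,-72)$ for $m=2$, $\ell=4$).
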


Since the rational solutions of \PIV\ are linear combinations of
log-derivatives of Hermite Wronskians, it makes sense to apply the
shortest chain results of Section \ref{sec:min} to obtain more
economical descriptions of these rational solutions.
\begin{prop}\label{prop:GH}
  If $m,\ell >0$, then the minimal order of a GH-type Maya diagram
  $M=\MGH(m,\ell)$ is $\min\{ m,\ell\}$.  If $\ell\leq m$, then the
  minimal order determinant is just the usual Wronskian
  \[ H_M = \Wr[H_{m},\ldots, H_{m+\ell-1}].\]
  If $\ell > m$ then the minimal-order determinant is the pseudo-Wronskian
  \[ H_{M-m-\ell} = \begin{vmatrix}
    \th_{\ell}&\ldots& \th_{\ell+m-1} \\
    \vdots & \ddots & \vdots \\
    \th_{\ell+m-1}&  \ldots & \th_{\ell+2m-2}
    \end{vmatrix}
    \]
\end{prop}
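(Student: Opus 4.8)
The plan is to read off the partition associated to $M=\MGH(m,\ell)$, apply Corollary~\ref{cor:MGO} to compute the minimal girth, and then use Proposition~\ref{prop:minMGO} (and its proof) to locate a minimal girth origin and evaluate the pseudo-Wronskian there directly from \eqref{eq:pWdef2}.

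First I would note that $\MGH(m,\ell)$ is already in standard form, with $M_-=\emptyset$ and $M_+=\{m,m+1,\ldots,m+\ell-1\}$, so the elements of $M$ in decreasing order are $m+\ell-1>m+\ell-2>\cdots>m>-1>-2>\cdots$. The integers missing from $M$ and lying below $m+\ell-1$ are exactly $0,1,\ldots,m-1$, so \eqref{eq:lambdafromM} gives $\lambda_i=m$ for $1\le i\le\ell$ and $\lambda_i=0$ for $i>\ell$; that is, $\lambda$ is the rectangular partition $(m^{\ell})$, whose Ferrer's diagram is an $\ell\times m$ box.

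Next, Corollary~\ref{cor:MGO} expresses the minimal girth as $\min\{\lambda_{j+1}+j\colon 0\le j\le\ell\}$. Since $\lambda_{j+1}=m$ for $0\le j\le\ell-1$ and $\lambda_{\ell+1}=0$, this equals $\min\bigl(\{m,m+1,\ldots,m+\ell-1\}\cup\{\ell\}\bigr)=\min\{m,\ell\}$, which proves the order statement. Moreover a rectangular Ferrer's diagram has no inside corners, so by Proposition~\ref{prop:minMGO} a minimal girth origin can only be $n=0$ (where $(i_n,j_n)=(0,\ell)$, girth $\ell$) or $n=\max M+1=m+\ell$ (where $(i_n,j_n)=(\lambda_1,0)=(m,0)$, girth $m$). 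If $\ell\le m$ the minimum is $\ell$, attained at $n=0$; since $M$ is in standard form with $M_-=\emptyset$, Definition~\ref{def:HpW} gives directly $H_M=\Wr[H_m,H_{m+1},\ldots,H_{m+\ell-1}]$, an $\ell\times\ell$ Wronskian. If $\ell>m$ the minimum is $m$, attained at $n=m+\ell$, and a short computation shows that $M'=M-m-\ell$ has $M'_+=\emptyset$ and $M'_-=\{\ell,\ell+1,\ldots,\ell+m-1\}$. Writing the latter in decreasing order as $s_a=\ell+m-a$, the representation \eqref{eq:pWdef2} (with $p=m$, $q=0$) together with the identity $D_x(e^{x^2}\th_n)=e^{x^2}\th_{n+1}$ gives $H_{M'}=\det\bigl(\th_{\ell+m-a+b-1}\bigr)_{a,b=1}^{m}$, and reversing the order of the $m$ rows (which only affects the overall sign) puts this in the displayed Hankel form with $(a,b)$-entry $\th_{\ell+a+b-2}$.

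The computations are entirely elementary; the only steps needing a bit of care are the explicit determination of the shifted diagram $M-m-\ell$ and the matching of the pseudo-Wronskian coming from \eqref{eq:pWdef2} with the displayed Hankel matrix, up to the harmless sign from permuting rows, together with the observation that the borderline case $\ell=m$ is consistently subsumed under the first alternative. I expect this bookkeeping, rather than any genuine difficulty, to be the crux of the argument.
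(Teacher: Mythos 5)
Your argument is correct, and it follows exactly the route the paper's own machinery is designed for: the paper in fact states Proposition~\ref{prop:GH} without proof (only the O-type analogue, Proposition~\ref{prop:O}, is proved, by the same kind of enumeration of inside corners and girths that you carry out via the rectangular partition $(m^{\ell})$, Corollary~\ref{cor:MGO} and Proposition~\ref{prop:minMGO}). Your identification of $M-m-\ell$ and of the resulting Hankel determinant is correct, and the sign $(-1)^{m(m-1)/2}$ from reversing the rows that you flag is a genuine (harmless) discrepancy with the paper's unsigned display, which the authors themselves acknowledge with a $\pm$ in the parallel Proposition~\ref{prop:O}.
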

\begin{example}
 For the rational solutions of \PIV corresponding to the generalized Hermite class, take $m=2,\ell=4$  and set 
  \begin{align*}
     M_0&=\MGH(2,4)= \Z_-\cup \{ 2,3,4,5\}\\
    M_1 &= \MGH(2,5) = \Z_- \cup \{ 2,3,4,5,6\}\\
    M_2 &= \MGH(1,4) = \Z_- \cup \{ 1,2,3,4\}\\
    M_3 &= \MGH(3,3) = \Z_- \cup \{ 3,4,5 \}.    
  \end{align*}
  The corresponding Wronskians are determinants of order $4,5,4,3$,
  respectively. The more economical description is in term of
  pseudo-Wronskian determinants of order $2,2,1,3$, respectively.  By
  a direct calculation,
  \begin{align*}
    H_{M_0-6} &= \begin{vmatrix}
    \th_5 & \th_6\\
    \th_4 & \th_5
  \end{vmatrix} = -32(16x^8+64x^6+120x^4+45)\\
    H_{M_1-7} &= \begin{vmatrix}
      \th_6 & \th_7\\
      \th_5 & \th_6
  \end{vmatrix} = -64(32 x^{10}+240 x^8 + 720 x^6 + 600 x^4 + 450 x^2
            - 225)\\
    H_{M_2-5} &= \th_5 = 4 (4x^4+12x^2+3)\\
    H_{M_3-6} &=  \begin{vmatrix}
      \th_5 & \th_6 & \th_7\\
      \th_4 & \th_5 & \th_6\\
      \th_3 & \th_4 & \th_5
    \end{vmatrix} = -512 x ( 16 x^8+72 x^4 - 135)
  \end{align*}
  Setting $x=t$, the corresponding rational solutions \eqref{eq:GH1}
  \eqref{eq:GH2} \eqref{eq:GH3} are
  \begin{align*}
    y_1 &= \frac{32 \left(4 t^7+12 t^5+15 t^3\right)}{16 t^8+64
          t^6+120 t^4+45}-\frac{20 \left(16 t^9+96 t^7+216 t^5+120
          t^3+45 t\right)}{32 t^{10}+240 
          t^8+720 t^6+600 t^4+450 t^2-225} ,&&\quad (a,b) = (-11,-8),\\
    y_2 &=- \frac{32 \left(4 t^7+12 t^5+15 t^3\right)}{16 t^8+64
          t^6+120 t^4+45}-\frac{8 \left(2 t^3+3 t\right)}{4 t^4+12
          t^2+3} ,&&\quad (a,b) = (7,-32),\\
    y_3&=-\frac{1}{t}-\frac{32 \left(4 t^7+9 t^3\right)}{16 t^8+72
         t^4-135}+\frac{32 \left(4 t^7+12 t^5+15 t^3\right)}{16 t^8+64
         t^6+120 t^4+45}-2 t,&&\quad (a,b) = (1,-72).
  \end{align*}
\end{example}

\begin{prop}\label{prop:O}
  The minimal order of an O-type Maya diagram
  $M=\MO(\ell_1,\ell_2),\; \ell_1,\ell_2\in \N$, is
  $\max \{ \ell_1,\ell_2\}$.  If $\ell_1\leq \ell_2$, then the
  minimal-order pseudo-Wronskian is
  \[ H_{M-3\ell_1} = \pm
  \begin{vmatrix}
    \th_{2} & \th_{3} & \ldots & \th_{\ell_2+1} \\
    \th_{5} & \th_{6} & \ldots & \th_{\ell_2+4}   \\
    \vdots & \vdots & \ddots & \vdots \\    
    \th_{3\ell_1-1} &   \th_{3\ell_1}  & \ldots &   \th_{3\ell_1+\ell_2-2}\\
    H_2 & H_2' & \ldots & H_2^{(\ell_2-1)} \\
    H_5 & H_5' & \ldots & H_5^{(\ell_2-1)} \\
    \vdots & \vdots & \ddots & \vdots \\    
    H_{3\ell_2-3\ell_1-1} & H_{3\ell_2-3\ell_1-1}' & \ldots &
    H_{3\ell_2-3\ell_1-1}^{(\ell_2-1)}  
  \end{vmatrix}
  \]
  If $\ell_1> \ell_2$, then the minimal-order pseudo-Wronskian is
  \[ H_{M-3\ell_2} = \pm
  \begin{vmatrix} \th_{2} & \th_{3} & \ldots & \th_{\ell_1+1} \\
    \th_{5} & \th_{6} & \ldots & \th_{\ell_1+4} \\ \vdots & \vdots &
    \ddots & \vdots \\ \th_{3\ell_2-1} & \th_{3\ell_2} & \ldots &
    \th_{3\ell_2+\ell_1-2}\\ 
    H_1 & H_1' & \ldots & H_1^{(\ell_2-1)} \\ 
    H_4    & H_4' & \ldots & H_4^{(\ell_2-1)} \\ 
    \vdots & \vdots & \ddots &   \vdots \\ 
    H_{3\ell_1-3\ell_2-2} & H_{3\ell_1-3\ell_2-2}' & \ldots &
    H_{3\ell_1-3\ell_2-2}^{(\ell_2-1)}
  \end{vmatrix}
\]
\end{prop}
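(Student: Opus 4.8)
The plan is first to determine the minimal girth of $M=\MO(\ell_1,\ell_2)$, and then to read off the minimal-order determinant from the Frobenius symbol at an optimal origin. Observe that $M$ is already in standard form, so $M_-=\emptyset$ and $M_+=\{3r+1\colon 0\le r<\ell_1\}\cup\{3r+2\colon 0\le r<\ell_2\}$; in particular, on the non-negative axis $M$ has the exact three-periodic pattern in which position $3r$ is empty, position $3r+1$ is filled iff $r<\ell_1$, and position $3r+2$ is filled iff $r<\ell_2$. Recalling that $|M-k|=i_k+j_k$ with $i_k,j_k$ as in \eqref{eq:injndef}, I would obtain the lower bound $|M-k|\ge\max\{\ell_1,\ell_2\}$ for every $k\in\Z$ as follows: for each $r\in\{0,\dots,\ell_1-1\}$ the pair $\{3r,3r+1\}$ consists of an empty box followed by a filled box, and an elementary check according to whether $k$ lies below, inside, or above the pair shows that each such pair contributes at least $1$ to $i_k+j_k$; as these $\ell_1$ pairs are disjoint, $|M-k|\ge\ell_1$, and the disjoint pairs $\{3r,3r+2\}$ for $r=0,\dots,\ell_2-1$ give $|M-k|\ge\ell_2$ in the same way. (One could alternatively compute the partition of $M$ and invoke Corollary \ref{cor:MGO}.)

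Next I would exhibit an origin attaining the bound. Set $k_0=3\min\{\ell_1,\ell_2\}$; since $k_0-1=3(\min\{\ell_1,\ell_2\}-1)+2\in M$ and $k_0\notin M$, this $k_0$ is admissible in the sense of Proposition \ref{prop:everymooissacred}. On $[0,k_0)$ the pattern is literally ``empty, filled, filled'' repeated, so its holes there are exactly the multiples of $3$, and a direct count from \eqref{eq:injndef} gives $i_{k_0}=\min\{\ell_1,\ell_2\}$ and $j_{k_0}=|\ell_1-\ell_2|$, whence $|M-k_0|=\max\{\ell_1,\ell_2\}$. So the minimal girth is $\max\{\ell_1,\ell_2\}$, realized by the origin $k_0=3\ell_1$ when $\ell_1\le\ell_2$ and $k_0=3\ell_2$ when $\ell_1>\ell_2$. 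It then remains to compute the Frobenius symbol of $M':=M-k_0$ and substitute it into the determinantal representation \eqref{eq:pWdef2}. A negative position $-j$ of $M'$ is empty exactly when position $k_0-j$ of $M$ is empty; combining this with the ``empty, filled, filled'' pattern on $[0,k_0)$ gives, for $\ell_1\le\ell_2$, $M'_-=\{3i+2\colon 0\le i<\ell_1\}$ and $M'_+=\{3i+2\colon 0\le i<\ell_2-\ell_1\}$, and for $\ell_1>\ell_2$, $M'_-=\{3i+2\colon 0\le i<\ell_2\}$ and $M'_+=\{3i+1\colon 0\le i<\ell_1-\ell_2\}$; in both cases $|M'_-|+|M'_+|=\max\{\ell_1,\ell_2\}$, consistently. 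Listing these elements in descending order and inserting them into \eqref{eq:pWdef2} produces the two displayed determinants, up to the overall sign (recorded by the $\pm$) coming from reordering the conjugate-Hermite rows; and by Theorem \ref{thm:detequiv}, $H_{M'}$ is a nonzero scalar multiple of $H_M$, so it represents the same equivalence class and has the smallest possible order, since $|M'|$ equals the minimal girth.

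The conceptual content is in the two paragraphs above; the main obstacle is bookkeeping. Because the two interleaved progressions $3r+1$ and $3r+2$ enter $M_+$ asymmetrically, the cases $\ell_1\le\ell_2$ and $\ell_1>\ell_2$ must be treated separately, and one must track carefully how a shift by a multiple of $3$ permutes the residue classes when computing $M'_-$ and $M'_+$, and hence the exact top and bottom indices of the two row-blocks of the resulting determinant. I expect the index ranges in $M'_\pm$ --- and the corresponding entries in the first and last rows of each block --- to be the step most prone to slips.
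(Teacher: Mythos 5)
Your proof is correct, and for the key step --- showing that the minimal girth of $\MO(\ell_1,\ell_2)$ is $\max\{\ell_1,\ell_2\}$ --- you take a genuinely different route from the paper. The paper leans on the machinery already developed in Section \ref{sec:min}: it writes out $M_+$ in each case, lists \emph{all} the inside corners (at $0,3,\ldots,3\min\{\ell_1,\ell_2\}$ and then continuing in steps of $3$), and observes that the corresponding girths decrease from $\ell_1+\ell_2$ down to $\max\{\ell_1,\ell_2\}$ and increase by $1$ thereafter; the minimum and its location are then read off from this profile. You instead prove the lower bound $|M-k|\ge\max\{\ell_1,\ell_2\}$ from scratch by the disjoint-pairs counting argument (each pair consisting of a hole at $3r$ followed by a filled box at $3r+1$, resp.\ $3r+2$, contributes at least one unit to $i_k+j_k$ for every $k$), and then exhibit the single witness $k_0=3\min\{\ell_1,\ell_2\}$. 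Your argument is more self-contained --- it does not need Proposition \ref{prop:minMGO} or Corollary \ref{cor:MGO} --- and the pairing bound is a nice elementary device; what the paper's enumeration buys in exchange is the full girth profile at every candidate origin, which shows in passing that the minimizer is unique. Your computation of the Frobenius symbol of $M-k_0$ and the resulting row blocks agrees with the displayed determinants (and your derivation would in fact correct the apparent typo in the second display, where the derivative order on the $H$-rows should be $\ell_1-1$, not $\ell_2-1$, to match the $\ell_1\times\ell_1$ size of the matrix).
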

\begin{proof}
  Suppose that $\ell_1\leq \ell_2$, so that
  \[ M_+ = \{ 1,2, 4,5, \ldots, 3\ell_1-2, 3\ell_1-1\} \cup \{
  3\ell_1+2, 3\ell_1+5,\ldots, 3\ell_2-1 \}.\]The
  inside corners are located at $0,3,6,\ldots, 3\ell_1$ and then
  $3\ell_1+3,3 \ell_1+6,\ldots, 3\ell_2$.  The corresponding girths
  initially decrease:
  $\ell_1+\ell_2,+\ell_2-1, \ldots, \ell_2+1, \ell_2$ and increase by
  $+1$ thereafter.

  Suppose that $\ell_1> \ell_2$, so that
  \[ M_+ = \{ 1,2, 4,5, \ldots, 3\ell_2-2, 3\ell_2-1\} \cup \{
  3\ell_2+1, 3\ell_2+5,\ldots, 3\ell_1-2 \}.\]The
  inside corners are $0,3,6,\ldots, 3\ell_2$ and then
  $3\ell_2+2,3 \ell_2+5,\ldots, 3\ell_1-1$.  The corresponding girths
  initially decrease:
  $\ell_1+\ell_2,\ell_1+\ell_2-1, \ldots, \ell_1+1, \ell_1$ and
  increase by $+1$ thereafter.
\end{proof}

Figure \ref{fig:okamoto} contains a graphical illustration for the
minimal order pseudo-Wronskians described by Propositions
\ref{prop:GH} and \ref{prop:O} for the Generalized Hermite and Okamoto
polynomials, respectively. We see thus that in the case of generalized
Hermite polynomials, the minimal order occurs always for the partition
itself or its conjugate partition, and therefore it is always a true
Wronskian. In the example of the figure for $GH(3,5)$, the minimal
order according to Proposition \ref{prop:GH} must be $\min(3,5)=3$:
\[ GH(3,5)=\Wr(H_3,H_4,H_5,H_6,H_7)=\frac{1}{18432}\Wr(\th_5,\th_6,\th_7)\]
On the contrary, for Okamoto polynomials, the minimal order is generically a pseudo-Wronskian. In the example of the figure, for $O(3,5)$ that has order 8, the equivalent minimal order pseudo-Wronskian has order $\max(3,5)=5$ and corresponds to the Durfee symbol $[6,4,2|4,2]_{3\times2}$.
\[ O(3,5)= \Wr(H_1,H_2,H_4,H_5,H_7,H_8,H_{11},H_{14} ) \propto {\rm e}^{-3 x^2} \Wr ({\rm e}^{x^2} \th_8, {\rm e}^{x^2} \th_5, {\rm e}^{x^2} \th_2, H_2,H_5 )  \]

\begin{figure}
  \begin{center}
  \includegraphics[width=0.6\textwidth]{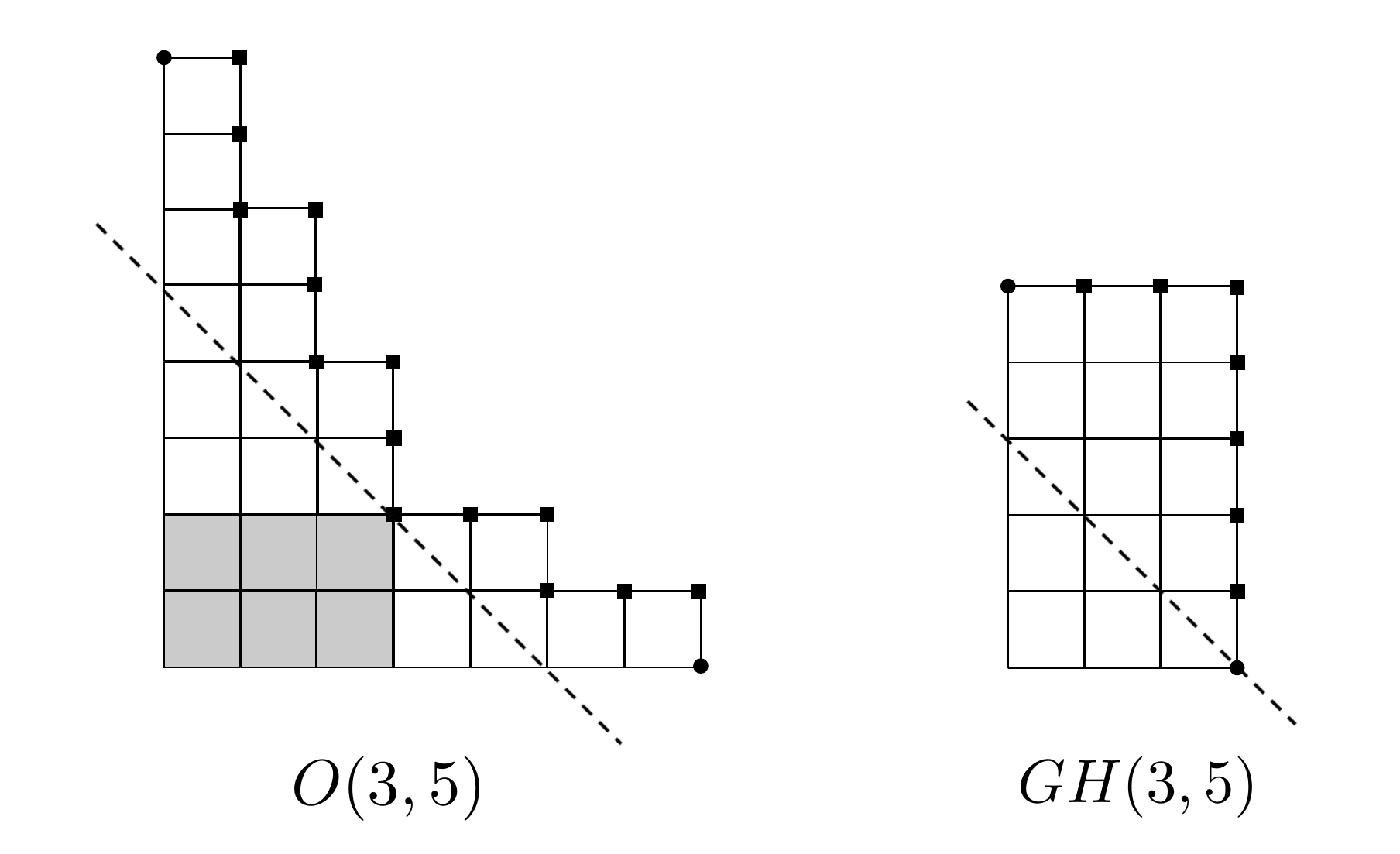}
\caption{Partitions and minimal order pseudo-Wronskians for two generalized Hermite and Okamoto polynomials. Minimal order representations for these polynomials are given by Durfee symbols $[6,4,2|4,2]_{3\times2}$ for $O(3,5)$ and  $[5,5,5|\emptyset]_{3\times 0}$ for $GH(3,5)$. }
  \label{fig:okamoto}
  \end{center}
\end{figure}

\begin{example}
  For the rational solutions of \PIV corresponding to the Okamoto class, take $\ell_1=1,\ell_2=2$ and set 
  \begin{align*}
     M_0 &= \MO(1,2) =  \Z_- \cup \{ 1,2,5\} \\
     M_1 &= \MO(0,1) = \Z_- \cup \{ 2\} \\
    M_2  &= \MO(2,2) = \Z_- \cup \{ 1,2,4,5\}\\
    M_3  &= \MO(1,3) = \Z_- \cup \{ 1,2,5,8\}.
  \end{align*}
  The corresponding Wronskian determinants have order $3,1,4,4$,
  respectively.  The minimal order description is in term of
  pseudo-Wronskian determinants of order $2,1,2,3$, respectively.  By
  a direct calculation,
  \begin{align*} 
    H_{M_0-3} &= 
    \begin{vmatrix} \th_2 & \th_2 \\      H_2 & H_2'
    \end{vmatrix} = -8x (4x^4-5)\\
    H_{M_1} &= \h_2 = 2(2x^2-1)\\ 
    H_{M_2-6} &= 
    \begin{vmatrix} \th_5 & \th_6 \\ \th_2 & \th_3 
    \end{vmatrix} = -48(8x^6+20x^4+10x^2+5)\\
    H_{M_3-3} &= 
    \begin{vmatrix} \th_2 & \th_3  & \th_4 \\ H_2 & H_2' & H_2''\\ H_5 & H_5' &
      H_5''
    \end{vmatrix} = 192(32x^{10}- 80x^8 - 80 x^6 + 200 x^4 - 150 x^2 - 25)
  \end{align*} Setting $x=t/\sqrt{3}$, the corresponding rational
  solutions \eqref{eq:U1}   \eqref{eq:U2} \eqref{eq:U3} are 
\begin{align*} y_1 
  &   = -\frac{2 t}{3}+\frac{16 t^3}{4
    t^4-45}+\frac{1}{t}-\frac{4 t}{2 t^2-3} ,
  & & \quad (a,b) = \left(3,-\frac{32}{9}\right),\\
  y_2 
  &=-\frac{2 t}{3}+\frac{16 t^3}{4 t^4-45}+\frac{1}{t}-\frac{12
        \left(4 t^5+20 t^3+15 t\right)}{8 t^6+60 t^4+90 t^2+135},
  &&\quad (a,b) =  \left(-1,-\frac{128}{9}\right),\\
  y_3&=-\frac{2 t}{3}+\frac{16 t^3}{4 t^4-45}+\frac{1}{t}-\frac{20
       \left(16 t^9-96 t^7-216 t^5+1080 t^3-1215 t\right)}{32
       t^{10}-240 t^8-720 t^6+5400 t^4-12150 t^2-6075},
  &&\quad (a,b) =  \left(-5,-\frac{32}{9}\right).
  \end{align*}
\end{example}

\section{Acknowledgements}

The research of DGU has been supported in part by Spanish MINECO-FEDER
Grant MTM2015-65888-C4-3,  the ICMAT-Severo Ochoa project
SEV-2015-0554 and the BBVA Foundation \textit{Grant for researchers and cultural creators}.  The research of RM was supported in part by NSERC grant
RGPIN-228057-2009. DGU and RM would like to thank Universit\'e de
Lorraine for their hospitality during their visit in the summer of
2015 where many of the results in this paper where first obtained.


\begin{thebibliography}{10}


\bibitem{Adler1994}
  V.~E. Adler, A modification of Crum's method, \textit{Theor.
  Math. Phys.} \textbf{101} (1994), 1381--1386.

\bibitem{Adlerchains}
  V.~E. Adler,  Nonlinear chains and Painlev\'e equations,
  \textit{Physica D} \textbf{73.4} (1994), 335--351.  
  
\bibitem{andrews3} G. E. Andrews, Generalization of the Durfee square,
  \emph{J. London Math. Soc.} \textbf{3} (1971), 563--570.
  
\bibitem{andrews} G. E. Andrews and K. Eriksson \emph{Integer
    partitions}, Cambridge University Press, 2004.

\bibitem{andrews2} G. E. Andrews, \emph{The theory of partitions}, Cambridge
  University Press, New York, 1998.

\bibitem{Bakalov} B. Bakalov, E. Horozov and M. Yakimov,  Bispectral algebras of commuting ordinary differential operators. \textit{Commun. Math. Phys.} \textbf{190(2)} (1997) 331-373.


\bibitem{ChuClaudio} W. Chu and L. Di Claudio, \textit{Classical
    partition identities and basic hypergeometric series}, Edizioni
  del Grifo, 2004.

\bibitem{clarkson1} P.A. Clarkson, The fourth Painlev\'e equation and
  associated special polynomials. \textit{J. Math. Phys.} \textbf{44}
  (2003), 5350--5374.

\bibitem{clarkson2009} P.A. Clarkson, Vortices and polynomials,
  \emph{Stud. Appl. Math.} 123 (2009) 37-62.

\bibitem{Crum} M. M. Crum, Associated {S}turm-{L}iouville systems,
  {\em Quart. J. Math. Oxford Ser. (2)} \textbf{6} (1955), 121--127.
  
  
\bibitem{deift} P. A. Deift, Applications of a commutation formula, \textit{Duke Math. J.} \textbf{45(2)} (1978), 267--310.


\bibitem{grunbaum} J. J. Duistermaat and F. A.  Gr\"unbaum, Differential equations in the spectral parameter, \textit{Commun. Math. Phys.} \textbf{103(2)} (1986), 177--240.

\bibitem{durr} A.~J. Dur\'{a}n, Higher order recurrence relation for
  exceptional Charlier, Meixner, Hermite and Laguerre orthogonal
  polynomials, Integr.  Transf.  Spec. F. \textbf{26} (2015),
  357--376.
  
\bibitem{Felder2012}
  G.~Felder, A.~D. Hemery, and A.~P. Veselov, Zeros of Wronskians of
  Hermite polynomials and Young diagrams, \textit{Physica D}
  \textbf{241} (2012),  2131--2137.

\bibitem{FP2008} G. V. Filipuk and P. A. Clarkson, The symmetric
  fourth Painlev\'e hierarchy and associated special polynomials,
  \textit{Stud. Appl. Math.} \textbf{121}(2) (2008), 157--188.
  
\bibitem{FH} W. Fulton and J. Harris,\textit{ Representation theory. A
    first course.} Graduate Texts in Mathematics
  \textbf{129}. Readings in Mathematics. Springer-Verlag, New York,
  1991.
  
  \bibitem{Feigin} M. Feigin and D. Johnston,  A class of Baker--Akhiezer arrangements, \textit{Commun. Math. Phys.}, \textbf{328(3)} (2014), 1117--1157.

\bibitem{Freeman1983} N. C. Freeman, and J. J. C.  Nimmo, Soliton
  solutions of the Korteweg de Vries and the Kadomtsev-Petviashvili
  equations: the Wronskian technique, \textit{Proc. Royal Soc. London
    A } \textbf{389} (1983), 319--329.

  
\bibitem{Garcia-Ferrero2015} M. A. Garc\'ia-Ferrero,
  D.~G\'{o}mez-Ullate, Oscillation theorems for the Wronskian of an
  arbitrary sequence of eigenfunctions of Schr\"odinger’s equation,
  \textit{Lett. Math. Phys.} \textbf{105(4)} (2015), 551--573.

\bibitem{Garcia-Ferrero2016} M. A. Garc\'ia-Ferrero,
  D.~G\'{o}mez-Ullate, and R.~Milson, A Bochner type classification
  theorem for exceptional orthogonal polynomials, (2016)
  \texttt{arXiv:1603.04358 [math.CA]}.

\bibitem{Gomez-Ullate2009a} D.~G\'{o}mez-Ullate, N.~Kamran, and
  R.~Milson, An extended class of orthogonal polynomials defined by a
  Sturm-Liouville problem,\textit{ J. Math. Anal. Appl.} \textbf{359}
  (2009), 352--367.

\bibitem{Gomez-Ullate2010c} D.~G\'{o}mez-Ullate, N.~Kamran, and
  R.~Milson, An extension of Bochner's problem: Exceptional invariant
  subspaces, \textit{J. Approx.  Theory} \textbf{162} (2010),
  987--1006.
  
\bibitem{Gomez-Ullate2012a} D.~G\'{o}mez-Ullate, N.~Kamran, and
  R.~Milson, A conjecture on exceptional orthogonal polynomials,
  \textit{Found. Comput. Math.} \textbf{13} (2012), 615--666.

\bibitem{Gomez-Ullate2013} D.~G\'{o}mez-Ullate, F.~Marcell\'{a}n, and
  R.~Milson, Asymptotic and interlacing properties of zeros of
  exceptional Jacobi and Laguerre polynomials,
  \textit{J. Math. Anal. Appl. } \textbf{399} (2013), 480--495.

\bibitem{Gomez-Ullate2014} D.~G\'{o}mez-Ullate, Y.~Grandati, and
  R.~Milson, Rational extensions of the quantum harmonic oscillator
  and exceptional Hermite polynomials, \textit{J. Phys. A: Math. Gen.}
  \textbf{47} (2014), 015203.

\bibitem{Gomez-Ullate2015b} D.~G\'omez-Ullate, A.~Kasman,
  A.~B. J.~Kuijlaars and R.~Milson, Recurrence Relations for
  Exceptional Hermite Polynomials, \textit{J. Approx. Theory
  }\textbf{204} (2016), 1--16.

\bibitem{GGUM1} D.~G\'{o}mez-Ullate, Y.~Grandati, and R.~Milson,
  \emph{Rational solutions of Painlev\'e IV and V and Exceptional
    Orthogonal Polynomials}, in preparation.

\bibitem{GGUM2} D.~G\'{o}mez-Ullate, Y.~Grandati, and R.~Milson,
  \emph{Shape invariance and equivalent Jacobi-Trudi representations
    for X-OP}, in preparation.

\bibitem{Grandati-Schur} Y. Grandati, Exceptional orthogonal
  polynomials and generalized Schur polynomials,
  \textit{J. Math. Phys. }\textbf{55}(8) (2014), 083509.

\bibitem{grunbaum2}   F. A. Gr\"unbaum and L. Haine, Some functions that generalize the Askey–Wilson polynomials, \textit{Commun. Math. Phys.} \textbf{184(1)} (1997) 173--202.

\bibitem{Hemery2010} A.~D. Hemery and A.~P. Veselov, Whittaker-Hill
  equation and semifinite-gap Schr\"odinger operators, \textit{
    J. Math. Phys. } \textbf{51} (2010), 072108.
  

\bibitem{Ho2011b} C.-L. Ho, Dirac(-Pauli), Fokker-Planck equations and
  exceptional Laguerre polynomials, \textit{Ann.  Phys.} \textbf{326}
  (2011), 797--807.
  

\bibitem{Horvath2015} \'{A}.~P. Horv\'{a}th, The electrostatic
  properties of zeros of exceptional Laguerre and Jacobi polynomials
  and stable interpolation, \textit{J. Approx. Theory }\textbf{194}
  (2015), 87--107.


\bibitem{Kudryashov} M.~V. Demina and N.~A. Kudryashov, Multi–particle
  dynamical systems and polynomials, \textit{Reg.  Chaotic Dyn.}
  \textbf{21} (2016), 351--366.
  

  
\bibitem{Kuijlaars2015} A. B. J. Kuijlaars and R.~Milson, Zeros of
  exceptional Hermite polynomials, \textit{J. Approx. Theory }
  \textbf{200} (2015), 28--39.

\bibitem{krein-adler} D.~G\'{o}mez-Ullate, Y.~Grandati, and R.~Milson,
  Extended Krein-Adler theorem for the translationally shape invariant
  potentials, \textit{J. Math. Phys.} \textbf{55} (2014), 043510.
  
  

  
  
\bibitem{Karlin1960} S. Karlin and G. Szeg\H{o}, On certain
  determinants whose elements are orthogonal polynomials,
  \textit{J. Analyse Math.} \textbf{8} (1960), 1--157.

\bibitem{Marquette2013a} I.~Marquette and C.~Quesne, New families of
  superintegrable systems from Hermite and Laguerre exceptional
  orthogonal polynomials,\textit{ J. Math. Phys. } \textbf{54} (2013),
  042102.

\bibitem{Matveev} V. B. Matveev and M. A.  Salle, \textit{Darboux
    transformations and solitons}. Berlin: Springer-Verlag, 1991.

\bibitem{Miki2015} H.~Miki and S.~Tsujimoto, A new recurrence formula
  for generic exceptional orthogonal polynomials, \textit{
    J. Math. Phys. } \textbf{56} (2015), 033502.

\bibitem{noumi} M. Noumi,\textit{ Painlevé equations through
    symmetry}.  AMS-Springer Mathematical Monographs \textbf{223},
  2004.

\bibitem{Oblomkov1999} A.A. Oblomkov, Monodromy-free Schr\"odinger
  operators with quadratically increasing potentials, {\em
    Theor. Math. Phys.}, 121(3):1574--1584, 1999.

\bibitem{Odake2014} S.~Odake, Equivalences of the multi-indexed
  orthogonal polynomials, \textit{J. Math. Phys.} \textbf{55} (2014),
  013502.

\bibitem{olsson} J. B. Olsson, Combinatorics and representations of
  finite groups. \textbf{Vol. 20} \textit{Fachbereich Mathematik},
  Universit\"at Essen, 1993.

  
\bibitem{Odake2009a} S.~Odake and R.~Sasaki, Infinitely many shape
  invariant potentials and new orthogonal polynomials,
  \textit{Phys. Lett. B }\textbf{679} (2009), 414--417.

\bibitem{Odake2015} S.~Odake, Recurrence Relations of the
  Multi-Indexed Orthogonal Polynomials : III, J\textit{
    J. Math. Phys. } \textbf{57} (2016), 023514.
  
\bibitem{ohta} Y. Ohta, J. Satsuma, D. Takahashi and T. Tokihiro, An
  elementary introduction to Sato theory, \emph{Progress of
    theoretical physics supplement}, 94 (1988) 210-241.

\bibitem{Post2012} S.~Post, S.~Tsujimoto, and L.~Vinet, Families of
  superintegrable Hamiltonians constructed from exceptional
  polynomials, \textit{J. Phys. A: Math. Gen.}  \textbf{45} (2012),
  405202.

\bibitem{Quesne2009} C.~Quesne, Solvable rational potentials and
  exceptional orthogonal polynomials in supersymmetric quantum
  mechanics, \textit{SIGMA }\textbf{5} (2009).


  

\bibitem{Schulze-Halberg2014} A.~Schulze-Halberg and B.~Roy, Darboux
  partners of pseudoscalar Dirac potentials associated with
  exceptional orthogonal polynomials, \textit{Ann.  Phys.}
  \textbf{349} (2014), 159--170.

\bibitem{Sergeev2014} A. N.  Sergeev, and A. P.  Veselov, Jacobi--Trudy
  formula for generalized Schur polynomials, \textit{Mosc. Math. J.}
  \textbf{14(1)} (2014), 161--168.

\bibitem{Szego1939} G. Szeg\H{o}, \emph{{Orthogonal Polynomials}},
  Colloquium Publications, vol.~23, American Mathematical Society,
  1939.
  
\bibitem{Takemura2014} K.~Takemura, Multi-indexed Jacobi polynomials
  and Maya diagrams, \textit{J. Math. Phys.} \textbf{55} (2014),
  113501.
  

\bibitem{veselov93} A. P. Veselov and A. B.  Shabat, Dressing chains
and spectral theory of the Schrödinger operator,
\textit{Funktsional. Anal. i Prilozhen.} \textbf{27(2)} (1993), 1--21.
    
    
\bibitem{zubelli} J. P. Zubelli and F. Magri, Differential equations in the spectral parameter, Darboux transformations and a hierarchy of master symmetries for KdV. \textit{Commun. Math. Phys.} \textbf{141(2)} (1991), 329-351.




    
   
  
\end{thebibliography}
\end{document}